\newtheorem{theorem}{Theorem}[section]
\newtheorem{lemma}[theorem]{Lemma}
\newtheorem{remark}[theorem]{Remark}
\theoremstyle{definition}
\newtheorem{definition}{Definition}[section]
\newcommand{\tn}{\tilde{n}}
\newcommand{\cha}{\mathbb{1}_{(-d_1,d_1)}}
\newcommand{\out}{\mathbb{1}_{\mathbb{R}\setminus (-d_1,d_1)}}
\newcommand{\tu}{\tilde{u}}
\newcommand{\ou}{\bu}
\newcommand{\bu}{\bar{u}}
\newcommand{\bbu}{\bar{U}}
\newcommand{\om}{{\Omega_{0}}}
\newcommand{\R}{{\mathbb{R}}}
\newcommand{\bydef}{\stackrel{\mbox{\tiny\textnormal{\raisebox{0ex}[0ex][0ex]{def}}}}{=}}
\title{Existence and orbital stability proofs of traveling wave solutions on an infinite strip for the suspension bridge equation}
\author{
Lindsey van der Aalst \footnote{VU Amsterdam, Department of Mathematics, De Boelelaan 1111, 1081 HV Amsterdam, The Netherlands.  {\tt l.j.w.van.der.aalst@vu.nl} (Corresponding author)}
\and
Matthieu Cadiot
\footnote{CMAP, CNRS, Ecole polytechnique, Institut Polytechnique de Paris, 91120 Palaiseau, France. {\tt matthieu.cadiot@polytechnique.edu}}
}
\begin{document}

\maketitle

\begin{abstract}
In this paper, we present a computer-assisted approach for constructively proving the existence of traveling wave solutions of the suspension bridge equation on the infinite strip $\Omega = \mathbb{R} \times (-d_2,d_2)$. Using a meticulous Fourier analysis, we derive a quantifiable approximate inverse $\mathbb{A}$ for the Jacobian $D\mathbb{F}(\bu)$ of the PDE at an approximate traveling wave solution $\bu$. Such approximate objects are obtained thanks to Fourier coefficient sequences and operators, arising from Fourier series expansions on a rectangle $\om = (-d_1,d_1) \times (-d_2,d_2)$ for large $d_1$. In particular, the challenging exponential nonlinearity of the equation is tackled using a rigorous control of the aliasing error when computing related Fourier coefficients. This allows to establish a Newton-Kantorovich approach, from which the existence of a true traveling wave solution of the PDE can be proven in a vicinity of $\bu$. We successfully apply such a methodology in the case of the suspension bridge equation and prove the existence of multiple traveling wave solutions on $\Omega$. Finally, given a proven solution $\tilde{u}$,  a Fourier series approximation on $\om$ allows us to accurately enclose the spectrum of $D\mathbb{F}(\tilde{u})$. Such a tight control provides the number of negative eigenvalues, which in turn, allows us to conclude about the orbital (in)stability of the traveling wave.
\end{abstract}

\begin{center}
{\bf \small Key words.} 
{ \small  Traveling waves, PDEs on unbounded domains,  Stability analysis, Fourier analysis}
\end{center}

% \begin{center}
% {\bf \small Mathematics Subject Classification (2020)}  \\ \vspace{.05cm}
% {\small 	35B35 $\cdot$ 47F05 $\cdot$ 35P05 $\cdot$ 35P15 $\cdot$ 35K58} 
% \end{center}

\section{Introduction}
The objective of this paper is to prove the existence of traveling wave solutions of the two-dimensional suspension bridge equation on an infinite strip $\Omega \bydef \R \times (-d_2,d_2)$ for $d_2>0$ and to prove orbital (in)stability. We let $v=v(t,X_1,X_2) : \R^+ \times \Omega \to \R$ represent the deflection in the downward direction of the surface of the bridge and we let $\Delta^2=(\partial_{X_1}^2+\partial_{X_2}^2)^2$. One of the systems used for modeling suspension bridges is (e.g. \cite{SB_chen}, \cite{SB_nagatou})
\begin{equation}\label{eq:SBnowave}
    \partial_t^2 v = -\Delta^2 v  - e^{v} +1.
\end{equation}
Summaries of the historical development of different mathematical models for suspension bridges, both simplistic and advanced, are available in \cite{Drabek_overview} and \cite{Gazzola_overview}.

In the dynamics of the suspension bridge equation, traveling waves can be observed \cite{wave_breuer}. In order to look for such solutions, we introduce the parameter $c\in \mathbb{R}$ to denote the wave speed. We consider waves in the $X_1$-direction. Hence, we choose the traveling wave ansatz $v(t,X_1,X_2) = u(X_1-ct,X_2) = u(x_1,x_2)$ such that the function $u : \Omega \to \R$ satisfies
\begin{equation}\label{eq : suspension bridge equation}
    \Delta^2 u + c^2 \partial_{x_1}^2u + e^{u} -1 =0,
\end{equation}
with $u(x) \to 0$ as $|x_1| \to \infty$. In the $x_2$ direction, we impose the following boundary conditions \cite{Aalst2024}
\begin{align}\label{eq : neumann boundary conditions}
\partial_{x_2}u(\cdot,\pm d_2)=\partial_{x_2}^3u(\cdot,\pm d_2)=0.
\end{align}
We only consider values of $c$ in $(0,\sqrt{2})$, because if $c$ tends to zero, the amplitude of the waves grows to infinity, and for $c=0$, nontrivial solutions do not longer exist \cite{PeletierTroy}. Contrarily, if $c$ approaches $\sqrt{2}$, the solutions become oscillatory as the amplitudes of the waves tend to go to zero \cite{SB_horak}.

We want to establish the existence of a solution $\tilde{u}$ to \eqref{eq : suspension bridge equation} on $\Omega$. Numerical simulations of solutions to this equation, obtained via the Mountain Pass Algorithm, are discussed in \cite{SB_horak}. A rigorous existence proof for traveling wave solutions to \eqref{eq : suspension bridge equation} on finite domains, periodic in one direction and satisfying Neumann boundary conditions in the other, was later given in \cite{Aalst2024}. We combine the techniques and results outlined there with procedures developed in \cite{Cadiot2023} and \cite{Cadiot2024} to obtain an existence proof of a solution that decays to zero as $|x_1| \rightarrow  \infty$.

Proving the existence of solutions to nonlinear partial differential equations (PDEs) on unbounded domains poses substantial analytical challenges. In particular, the loss of compactness in the resolvent of differential operators on unbounded domains complicates the application of standard existence theorems. In \cite{BookOverviewCAP}, this obstacle is overcome by rigorously controlling the spectrum of the linearization of an approximate solution resulting in methods for proving weak solutions to PDEs of second and fourth order. These methods are demonstrated for the Schr\"{o}dinger equation on $\mathbb{R}^2$. That book also discusses the work of \cite{Pacella2016}, where solutions to Emden's equation on an unbounded L-shaped domain are proven. Additionally, \cite{Wunderlich_navierstokes} verifies the existence of a weak solution to the Navier-Stokes equations on a perturbed infinite strip. While methods providing weak solutions are advantageous for certain applications, they do not naturally extend to the proof of strong solutions in the present context. We note that while strong solutions on bounded domains are addressed in \cite{BookOverviewCAP}, our focus is on the construction of strong solutions on an infinite strip.

 In contrast, the previously mentioned papers \cite{Cadiot2023} and \cite{Cadiot2024} provide an approach for proving strong solutions on unbounded domains by introducing a new technique for constructing approximate inverses of PDE operators and proving compactness. In these works, solutions on unbounded domains are proven for equations with polynomial nonlinearities, specifically the Kawahara and Swift-Hohenberg equations. The techniques discussed in these papers need to be adjusted to be able to apply them to our suspension bridge equation that includes an exponential nonlinearity, which will be done in this paper. Our approach is versatile in the sense that it can be extended to other analytic nonlinearities as well.
 
 To deal with nonpolynomial terms such as the exponential term in \eqref{eq : suspension bridge equation}, the method described in \cite{Figueras} can be used to control the aliasing error. In \cite{JP_FFT}, it is demonstrated how to apply these techniques for working with nonpolynomial nonlinearities in one spatial dimension. The generalization to higher dimensions can be found in \cite{Aalst2024}, in which specifically the suspension bridge equation is discussed, but only on bounded domains. 

\begin{figure}
    \centering
    \includegraphics[width=0.65\linewidth]{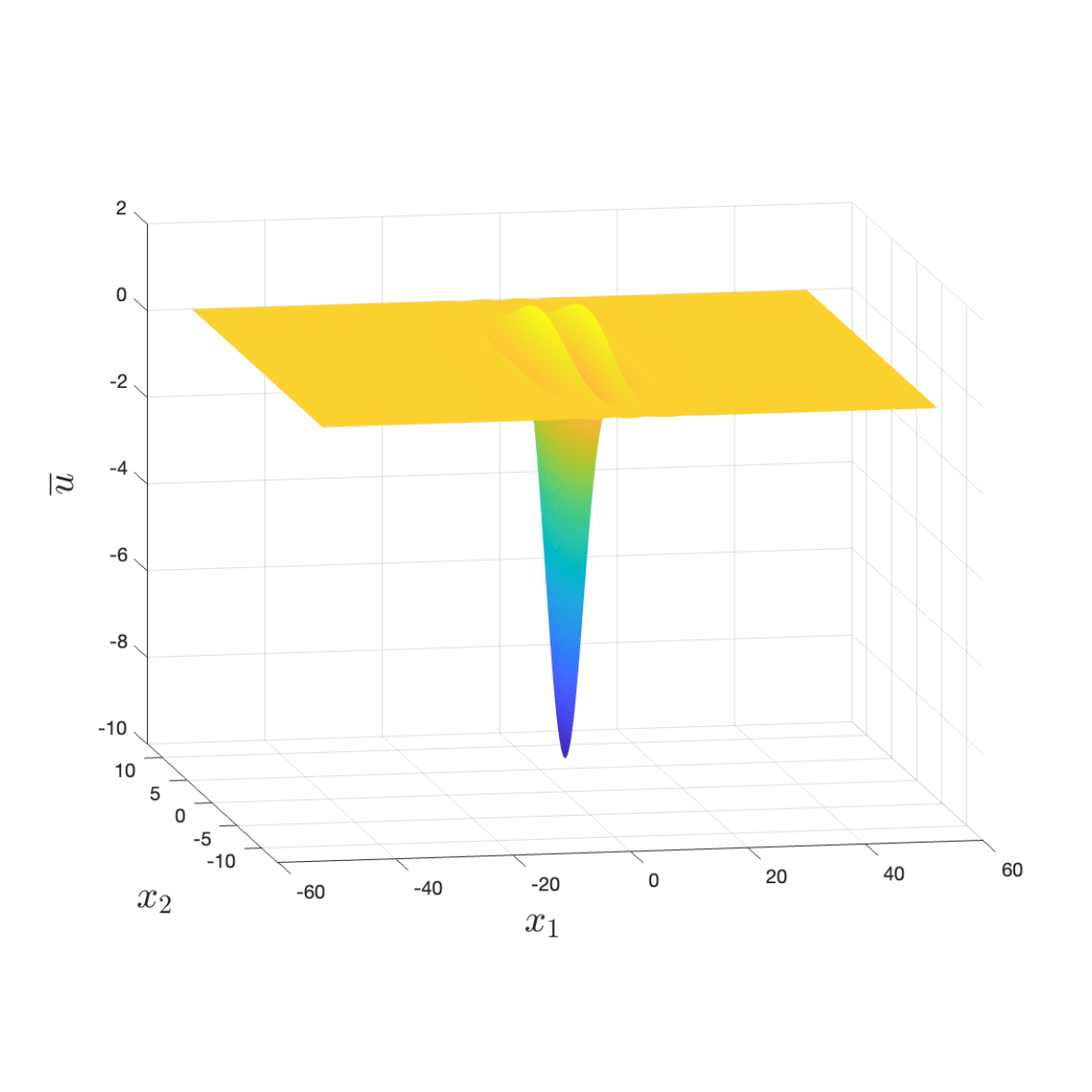}
     \vspace*{-15mm}
    \caption{Visualization of an approximate solution $\bu$ to \eqref{eq : suspension bridge equation} with $c = 1.2$. The approximation is truncated to a finite domain in this plot.}
    \label{fig:proof}
\end{figure}

In this paper, we construct an approximate solution $\bu : \Omega \to \R$ to \eqref{eq : suspension bridge equation} and then prove the existence of a true solution to \eqref{eq : suspension bridge equation} in a vicinity of $\bu$.
An illustration of such an approximate solution $\bu$, that we rigorously verify in Theorem \ref{thm:proof}, is depicted in Figure \ref{fig:proof}. The precise construction of $\bu$ is explained in Section \ref{sec:constructionu0}.  The verification of such an approximate solution is done using a computer-assisted proof (CAP), a method that has gained significant interest in the rigorous verification of numerical simulations for dynamical systems over the past few decades. Early implementations of CAPs to the study of elliptic PDEs are seen in \cite{Nakao} and \cite{Plum}, stemming from 1988 and 1992, respectively. Later, advancements in the applications of CAPs to (elliptic) PDEs were made (e.g. \cite{Cyranka,radii_Day,Oishi,Takayasu,intro_berg2019,Piotr}). Comprehensive discussions on the role of CAPs in the study of PDEs are outlined in \cite{gomez_overview} and \cite{BookOverviewCAP}. 

In particular, the setting in which we work allows us to utilize a Newton-Kantorovich-type theorem. These type of theorems are widely used in the construction of CAPs for PDEs and various other contexts (see for instance \cite{intro_breden,intro_hungria,NS2D}). Specifically, we use a variant of this theorem called the radii polynomial theorem \cite{radii_Day}. To prove existence on a bounded domain using this theorem, we need to compute certain bounds. To compensate for the fact that we are working on an unbounded domain, an additional bound is introduced and computed in this paper. To guarantee the reliability of the computational components of the proof, we employ \textit{Julia} interval arithmetic using the packages \cite{RadiiPolynomial.jl} and \cite{IntervalArithmetic.jl}. 

The solutions obtained in our CAPs will also be analyzed for stability. This part of the work is based on the procedure outlined in \cite{SB_nagatou}, where the authors investigate orbital stability of \textit{one-dimensional} traveling wave solutions to the suspension bridge equation. Their analysis employs energy methods and spectral analysis to establish the stability results, supported by computer-assisted techniques. In two dimensions, to our knowledge, only numerical results concerning stability are available so far, which can be found in \cite{SB_horak}. To rigorously handle the spectral properties of the higher-dimensional equation, we follow the framework proposed in \cite{cadiot2025stabilityanalysislocalizedsolutions}. Using this approach, we are able to conclude that the solution corresponding to the approximation in Figure \ref{fig:proof} is orbitally stable.

The paper is organized as follows. We begin by introducing notation and formulating the zero-finding problem. In Section \ref{sec:existenceproof}, we present the Newton-Kantorovich theorem, which forms the foundation of our existence results. Moreover, we describe the construction of approximate solutions and provide more computational details. Section \ref{sec : computation bounds} presents the bounds required to apply the existence theorem. The set-up for the stability analysis is given in Section \ref{sec:stability}. Finally, the proof of the solution corresponding to the approximation shown in Figure \ref{fig:proof} is given in Section \ref{sec : results}, along with two additional existence results. All three proofs include verifications of orbital (in)stability.

\section{Set-up}
In this section, we introduce notation that will be used in the rest of the paper. Part of the notation is borrowed from \cite{Cadiot2023, Cadiot2024} and adjusted to our setting of the suspension bridge equation. 

\subsection{Zero-finding problem for functions}

Let $\mathbb{L}$ be the linear differential operator defined as 
\begin{align*}
\mathbb{L}=\Delta^2+c^2\partial_{x_1}^2+I
\end{align*}
where $I$ is the identity and $\Delta$ is the Laplace operator. Moreover, let $\mathbb{G}$ be defined as $\mathbb{G}(u)=e^u-u-1$. Then, \eqref{eq : suspension bridge equation} is equivalent to a zero finding problem $\mathbb{F}(u) = 0$ where 
\begin{align}\label{def : zero finding F}
    \mathbb{F}(u) = \mathbb{L}u + \mathbb{G}(u) =  \Delta^2 u + c^2 \partial_{x_1}^2u  + e^{u}  -1.
\end{align}
Note that by defining $\mathbb{G}$ as such we have that $\mathbb{G}(0) = 0$ and $D\mathbb{G}(0) = 0$.
In particular, we look for a solution to \eqref{eq : suspension bridge equation} such that $u(x) \to 0$ as $|x_1| \to \infty$ and $u$ satisfies boundary conditions \eqref{eq : neumann boundary conditions}. Taking advantage of the symmetries $u(x_1,x_2)=u(-x_1,x_2)=u(x_1,-x_2)$ allows us to represent $u$ as a cosine series instead of a general Fourier series. Furthermore, assuming the symmetry in the $x_1$-direction avoids the degeneracy of the Jacobian operator that would otherwise arise from the shift invariance of the problem on the infinite strip. As a result, we choose the following ansatz 
\begin{equation}\label{eq:ansatz}
    u(x_1,x_2) = u_0(x_1)  +  2\sum_{n_2 = 1}^\infty  u_{n_2}(x_1)\cos\left(2\pi \tilde{n}_2x_2\right) ~~ \text{ with } \tilde{n}_2 \bydef \frac{n_2}{2d_2},
\end{equation}
and where $u_{n_2} : \mathbb{R} \to \mathbb{R}$ and $u_{n_2}$ is even.
%
%
%
% \begin{remark}
%     Note that since we assume that the functions approach zero in the $x_1$-direction, we are able to choose \eqref{eq:ansatz} as our ansatz instead of the two-dimensional cosine series
%     \begin{align*}
%     u(x_1,x_2) = \sum_{n \in \mathbb{N}^2_0} \alpha_n u_n \cos\left({\frac{\pi n_1x_1}{d_1}}\right) \cos\left({\frac{\pi n_2x_2}{d_2}}\right).
% \end{align*}
% \end{remark}
Corresponding to \eqref{eq:ansatz}, we define $L^2_{e}$ as the following Hilbert space of $L^2$ even functions on the strip $\Omega$:
\begin{align*}
    L^2_{e} \bydef \left\{ u(x_1,x_2) = u_0(x_1) + 2 \sum_{n_2 \in \mathbb{N}} u_{n_2}(x_1) \cos\left(2\pi \tilde{n}_2x_2\right), ~ u_{n_2}(x_1) = u_{n_2}(-x_1) \text{ and } \|u\|_{2}<\infty \right\} 
\end{align*}
where 
\begin{equation*}
    \|u\|_{2} \bydef \left( \|u_0\|_{L^2(\R)}^2 + 2 \sum_{n_2 \in \mathbb{N}}  \|u_{n_2}\|_{L^2(\R)}^2\right)^{\frac{1}{2}}.
\end{equation*}
 Now, given $u \in L^2_{e}$ regular enough, we have that 
\begin{align*}
    \mathbb{L} u (x) = \mathbb{L}_0 u_0(x_1) + 2 \sum_{n_2 \in \mathbb{N}} \mathbb{L}_{n_2} u_{n_2}(x_1) \cos\left(2\pi \tilde{n}_2x_2\right)
\end{align*}
for all $x \in \Omega$, where 
\begin{align*}
    \mathbb{L}_{n_2} \bydef \left(-\frac{n_2^2\pi^2}{d_2^2} + \partial_{x_1}^2\right)^2 + c^2 \partial_{x_1}^2 + I
\end{align*}
for all $n_2 \in \mathbb{N}_0.$ In particular, note that for $c \in (0,\sqrt{2})$, the symbol $l_{n_2}(\xi_1)$ is strictly positive and bounded away from zero for all $\xi_1 \in \mathbb{R}$ (see Lemma \ref{lem : results on l}. Thus, $\mathbb{L}_{n_2} : H^4(\R) \to L^2(\R)$ is bijective with a bounded inverse. Consequently, similarly as what was achieved in \cite{Cadiot2023} and \cite{Cadiot2024}, we can define the Hilbert space $\mathcal{H}$ as follows: 
\begin{align*}
    \mathcal{H} \bydef \{u \in L^2_{e}, ~  \|u\|_\mathcal{H} \bydef \|\mathbb{L}u\|_{2} < \infty\}.
\end{align*}
In addition, we see that
\[
 \|u\|_\mathcal{H}= \left(\|\mathbb{L}_0u_0\|_2^2 + 2\sum_{n_2=1}^\infty \|\mathbb{L}_{n_2}u_{n_2}\|_2^2\right)^{\frac{1}{2}}.
\]
Using the bijectivity of the individual components $\mathbb{L}_{n_2}$, we have that $\mathbb{L} : \mathcal{H} \to L^2_{e}$ is an isometric isomorphism and hence bijective as well. Moreover, we define $l : \R^2 \to \R$ as the symbol of $\mathbb{L}$ and $l_{n_2} : \R \to \R$ as the one of $\mathbb{L}_{n_2}$ for all $n_2 \in \mathbb{N}_0$. Specifically, we have that 
\begin{equation}\label{eq:defl}
    l(\xi) \bydef |2\pi \xi|^4 - c^2 (2\pi \xi_1)^2 + 1 ~~ \text{ and } ~~ l_{n_2}(\xi_1) \bydef (2\pi)^4(  \xi_1^2 +  \tilde{n}_2^2 )^2 -  c^2 (2\pi \xi_1)^2 + 1
\end{equation}
where we recall that $\tilde{n}_2 \bydef \frac{n_2}{2d_2}$.

\begin{remark}
Note that one could impose different boundary conditions in the $x_2$-direction. For instance, a variational formulation of \eqref{eq : suspension bridge equation} leads to natural ``free'' boundary conditions which read 
$\Delta u(\cdot,\pm d_2)=\partial_{x_2}\Delta u(\cdot,\pm d_2)=0.$  
However, these conditions do not allow to use Fourier series in the $x_2$-direction (in contrast to \eqref{eq:ansatz}), which leads to technical and computational challenges in the current approach. This hurdle might be addressed using a different expansion in the $x_2$ direction (such as a polynomial expansion), and would require an adaptation of the presented analysis. 
\end{remark}

Now, it remains to prove that $\mathbb{G} : \mathcal{H} \to L^2_{e}$ is well defined in order to formulate \eqref{eq : suspension bridge equation} on $\mathcal{H}$. For that purpose, we use the following result:
\begin{lemma}\label{lem : banach algebra}
Let $\kappa_1,\kappa_2$ be defined as follows:
\begin{equation}\label{def : definition of kappa1 kappa2}
    \kappa_1 \bydef \frac{1}{1-\frac{c^4}{4}} ~ \text{ and } ~ \kappa_2 \bydef \left( \left\|\frac{1}{l_0}\right\|_{L^2(\R)}^2 + 2 \sum_{n_2 \in \mathbb{N} } \left\|\frac{1}{l_{n_2}}\right\|_{L^2(\R)}^2\right)^{\frac{1}{2}}.
\end{equation}
Then, for all $u, v \in \mathcal{H}$, we have that
\begin{align*}
    \|(e^{u}-1)v\|_{2} \leq \kappa_1 (e^{\kappa_2\|u\|_\mathcal{H}}-1)\|v\|_{\mathcal{H}}.
\end{align*}
\end{lemma}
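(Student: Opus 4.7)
The plan is to reduce the estimate to two continuous embeddings of $\mathcal{H}$: one into $L^\infty(\Omega)$ with constant $\kappa_2$, and one into $L^2_e$ with constant $\kappa_1$. Indeed, once these are established, the pointwise bound $|e^t - 1| \le e^{|t|} - 1$ for $t \in \R$ combined with H\"older's inequality yields
\begin{equation*}
\|(e^u - 1) v\|_2 \le \|e^u - 1\|_{L^\infty(\Omega)} \|v\|_2 \le \bigl(e^{\|u\|_\infty} - 1\bigr) \|v\|_2 \le \kappa_1 \bigl(e^{\kappa_2 \|u\|_\mathcal{H}} - 1\bigr) \|v\|_\mathcal{H},
\end{equation*}
which is the claim.

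For the $L^\infty$ embedding, I would use the ansatz \eqref{eq:ansatz} together with $|\cos(2\pi \tilde n_2 x_2)| \le 1$ to obtain $|u(x)| \le \|u_0\|_{L^\infty(\R)} + 2 \sum_{n_2 \in \mathbb{N}} \|u_{n_2}\|_{L^\infty(\R)}$ for every $x \in \Omega$. Each one-dimensional factor $\|u_{n_2}\|_\infty$ is controlled by Fourier inversion on $\R$: since $\mathbb{L}_{n_2}$ is the Fourier multiplier with symbol $l_{n_2}$, writing $\hat u_{n_2} = l_{n_2}^{-1} \cdot \widehat{\mathbb{L}_{n_2} u_{n_2}}$ and applying Cauchy-Schwarz in $\xi_1$ gives $\|u_{n_2}\|_\infty \le \|\hat u_{n_2}\|_{L^1(\R)} \le \|l_{n_2}^{-1}\|_{L^2(\R)} \|\mathbb{L}_{n_2} u_{n_2}\|_{L^2(\R)}$. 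A second application of Cauchy-Schwarz over the index $n_2$, paired against the weights $(1, 2, 2, \ldots)$ that appear in both $\|\cdot\|_2$ and $\|\cdot\|_\mathcal{H}$, then produces $\|u\|_{L^\infty(\Omega)} \le \kappa_2 \|u\|_\mathcal{H}$.

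For the $L^2_e$ embedding, I would minimize $l(\xi) = |2\pi\xi|^4 - c^2(2\pi\xi_1)^2 + 1$ over $\R^2$. Setting $\xi_2 = 0$ and $s = (2\pi\xi_1)^2$ reduces the problem to minimizing $s^2 - c^2 s + 1$ on $s \ge 0$, which is attained at $s = c^2/2$ with value $1 - c^4/4$, positive because $c \in (0, \sqrt{2})$. A similar elementary analysis of $l_{n_2}$ for $n_2 \ge 1$ shows that the same quantity is a uniform lower bound, so $\|l_{n_2}^{-1}\|_{L^\infty(\R)} \le \kappa_1$ for every $n_2$. Plancherel then gives $\|v_{n_2}\|_{L^2(\R)} \le \kappa_1 \|\mathbb{L}_{n_2} v_{n_2}\|_{L^2(\R)}$, and squaring, weighting by $(1, 2, 2, \ldots)$, and summing produces $\|v\|_2 \le \kappa_1 \|v\|_\mathcal{H}$. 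The only delicate point I anticipate is keeping the cosine-series weights consistent through the two Cauchy-Schwarz applications, so that the final constant is exactly $\kappa_2$ rather than a slightly different combination of the $\|l_{n_2}^{-1}\|_{L^2(\R)}$; the remainder is standard one-dimensional Fourier analysis.
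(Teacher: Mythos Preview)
Your proposal is correct and follows essentially the same approach as the paper: split via H\"older into an $L^\infty$ factor and an $L^2$ factor, then establish the embeddings $\|u\|_\infty \le \kappa_2\|u\|_\mathcal{H}$ and $\|v\|_2 \le \kappa_1\|v\|_\mathcal{H}$ using, respectively, Fourier inversion with Cauchy--Schwarz in $\xi_1$ and in $n_2$, and the explicit minimum $\min l_{n_2} = 1 - c^4/4$. The paper simply cites Lemma~2.1 of \cite{Cadiot2023} and Lemma~\ref{lem : results on l} for these two embeddings rather than spelling out the details you give.
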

\begin{proof}
First, we notice that 
\begin{align*}
     \|(e^{u}-1)v\|_{2} \leq \|e^{u}-1\|_\infty \|v\|_{2}.
\end{align*}
Then, we have
\begin{align*}
    \|e^{u}-1\|_\infty \leq e^{\|u\|_\infty}-1.
\end{align*}
 It remains to prove that $\|u\|_\infty \leq \kappa_2 \|u\|_\mathcal{H}$ and $\|v\|_{2} \leq \kappa_1 \|v\|_\mathcal{H}$. Using the proof of Lemma 2.1 in \cite{Cadiot2023}, we obtain that 
 \begin{align*}
     \|u\|_\infty \leq \left( \left\|\frac{1}{l_0}\right\|_{L^2(\R)}^2 + 2 \sum_{n_2 \in \mathbb{N} } \left\|\frac{1}{l_{n_2}}\right\|_{L^2(\R)}^2\right)^{\frac{1}{2}} \|u\|_\mathcal{H} \text{ and } \|v\|_{2} \leq \max_{n_2\in \mathbb{N}_0}\sup_{\xi_1 \in \R} \left| \frac{1}{l_{n_2}(\xi_1)}\right| \|v\|_\mathcal{H}.
 \end{align*}
We conclude the proof using \eqref{eq : minimum of 1 over l} from Lemma \ref{lem : results on l}.
\end{proof}

\begin{remark}
   In practice, for the computation of $\kappa_2$, we use Riemann summations to bound the summation up to finitely many $n_2$. In fact, this can be achieved using rigorous numerics (cf. \cite{julia_our_code}). To estimate the tail of the summation, we use \eqref{eq:tail_lnsum} in Lemma \ref{lem : results on l}.
\end{remark}

Using the embedding of $\mathcal{H}$ into $L^\infty(\Omega)$ established in Lemma \ref{lem : banach algebra} and the analyticity of the exponential function, it follows that $\mathbb{G} : \mathcal{H} \to L^2_{e}$ is a Fréchet differentiable mapping.
Accordingly, we investigate the solution of the following zero-finding problem \begin{align}\label{eq : zero finding on H}
    \mathbb{F}(u)=0 ~~ \text{ for } u \in \mathcal{H},
\end{align} 
where $\mathbb{F}: \mathcal{H} \rightarrow L^2_{e}$.

\subsection{Fourier series representation}\label{sec:fourierrep}

As described in \cite{Cadiot2023}, existence proofs of localized solutions can be achieved using approximations of the true solution in Fourier space. Following such an approach, we introduce notation corresponding to Fourier series, related to the ones introduced in the previous section.
Let $d_1>0$ and let $\Omega_0 \bydef (-d_1,d_1) \times (-d_2,d_2)$ be the bounded domain on which our approximate solution is constructed.
 We let $\mathbb{N}_0^2=(\mathbb{N} \cup \{0\})^2$  and we define
\begin{align}\label{def : definition of the alpha coefficients}
    \alpha_n=\alpha_{n_1,n_2}:=\begin{cases}
    1 & \text{ for } n_1=n_2=0,\\
      2 & \text{ for } n_1=0,n_2>0,\\
        2 &\text{ for } n_1>0,n_2=0,\\
          4 &\text{ for } n_1>0,n_2>0
    \end{cases}
\end{align}
for all $n \in \mathbb{N}_0^2$.
The coefficients $\alpha_n$ appear when switching between the cosine series representation and the exponential Fourier series one. Let $\ell^p$ denote the Lebesgue space for sequences defined as
\[
    \ell^p \bydef \left\{U = (U_n)_{n \in \mathbb{N}_0^2}: ~ \|U\|_p \bydef \left( \sum_{n \in \mathbb{N}_0^2} \alpha_n|U_n|^p\right)^\frac{1}{p} < \infty \right\}.
\]
Then, operators $\mathbb{L}$ and $\mathbb{G}$ have a Fourier coefficients representation, when being considered on $\Omega_0$ with periodicity in the $x_1$-direction. Indeed, defining $\tilde{n} = (\tilde{n}_1,\tilde{n}_2) = \left(\frac{n_1}{2d_1},\frac{n_2}{2d_2}\right)$, we introduce $L$, the Fourier coefficients representation of $\mathbb{L}$, as 
\[
(LU)_n = l(\tilde{n}) U_n \text{ for all } n \in \mathbb{N}_0^2
\]
and all $U \in \ell^2$. Then, we define $U*V$ as the discrete convolution for sequences indexed on $\mathbb{N}_0^2$, given as 
\begin{align*}
    (U \ast V)_n=\sum_{m \in \mathbb{Z}^2}U_{|m|}V_{|n-m|}
\end{align*}
for $n \in \mathbb{N}_0^2$. Note that Young's convolution inequality holds true for this convolution product, i.e., for $U \in \ell^2$ and $V \in \ell^1$, we have that
\begin{align}\label{eq:young}
    \|U * V\|_2 \leq \|U\|_2 \|V\|_1.
\end{align}
Moreover, given $U \in \ell^1$, we define $e^{U}$ as 
\begin{equation}\label{eq:powerseries}
    e^{U} \bydef \sum_{k=0}^\infty \frac{U^k}{k!},
\end{equation}
where $U^k = \overbrace{U*U*\dots*U}^{k \text{ times}}$. Consequently, we define $G: \ell^1 \to \ell^2$ as the equivalent of $\mathbb{G}$ as
\[
G(U) \bydef e^U - U - e_0,
\]
where $(e_0)_n = 1$ if $n=(0,0)$  and $(e_0)_n = 0$ for all $n \in \mathbb{N}^2 \setminus \{0,0\}$. Since $\ell^1$ is a Banach algebra under the discrete convolution product and because of \eqref{eq:powerseries}, $e^U \in \ell^1$, and thus in $\ell^2$, for any $U \in \ell^1$, which ensures the operator is well-defined on the sequence space. Finally, we define the zero finding counterpart $F: \ell^1 \to \ell^2$ on $\Omega_0$ as 
\[
F(U) \bydef LU + G(U).
\]
Now, as in \cite{Cadiot2023} and \cite{Cadiot2024}, we introduce operators which allow us to switch from $L^2_e$ to $\ell^2$ and vice versa. Therefore, we define $\gamma:L^2_e\rightarrow \ell^2$ and $\gamma^\dagger: \ell^2 \rightarrow L^2_e$ such that
\begin{align*}
    \left(\gamma(u)\right)_n&=\frac{1}{|\Omega_0|}\int_{\Omega_0}u(x_1,x_2)e^{-2\pi i \tilde{n} \cdot x}dx \quad \text{ for } n\in \mathbb{N}_0^2,\\
    \gamma^\dagger(U)(x)&=\mathbb{1}_{\Omega_0}(x)\sum_{n\in \mathbb{N}_0^2} \alpha_n U_n\cos\left(2\pi \tilde{n}_1 x_1\right)\cos\left(2\pi \tilde{n}_2 x_2\right),
\end{align*}
where $\mathbb{1}_{\Omega_0}$ denotes the characteristic function on $\Omega_0$.

Now, given $X$ a Banach space, we denote $\mathcal{B}(X)$ as the set of bounded linear operators on $X$.
Then, we define $\Gamma : \mathcal{B}(L^2_e) \to \mathcal{B}(\ell^2)$ and $\Gamma^\dagger : \mathcal{B}(\ell^2) \to \mathcal{B}(L^2_e)$ as
\begin{align}\label{def : gamma and gamma dagger}
    \Gamma(\mathbb{H}) = \gamma \mathbb{H}\gamma^\dagger ~  \text{ and } ~ 
    \Gamma^\dagger(H) = \gamma^\dagger H \gamma
\end{align}
for all bounded linear operators $\mathbb{H} \in \mathcal{B}(L^2_e)$ and $H \in \mathcal{B}(\ell^2)$. The map $\Gamma^\dagger$ will be useful when constructing operators on $\mathcal{B}(L^2_e)$ thanks to operators on Fourier coefficients (that is, operators on $\mathcal{B}(\ell^2)$).
Furthermore, we introduce the norm $\|\cdot\|_{2,\mathcal{H}}$ to denote the operator norm for bounded linear operators $L^2_e \rightarrow \mathcal{H}$. 

Finally, given $u \in L^\infty, U \in \ell^1$, we introduce notation to denote the linear multiplication operator $\mathbb{M}_u$ related to the function $u$ and the linear discrete convolution operator $\mathbb{M}_U$ related to the sequence $U$ as 
\begin{align}\label{def : multiplication operator}
    \mathbb{M}_u: L^2_e \to L^2_e : {v}\mapsto \mathbb{M}_u v=uv ~ \text{ and } ~
    \mathbb{M}_U: \ell^2 \to \ell^2 : V\mapsto \mathbb{M}_U V=U \ast V.
\end{align}
%In this setting, $\mathbb{M}_v=D\mathbb{G}(u)$ is the multiplication operator by $v$.

\section{Constructive proof of existence}\label{sec:existenceproof}
In this section, we present our approach for proving constructively the existence of solutions to \eqref{eq : suspension bridge equation}. Our approach is based on a Newton-Kantorovich theorem. More specifically, we construct a fixed point operator for which fixed points correspond to zeros of $\mathbb{F}$.

First, we construct an approximate solution $\bar{u} \in \mathcal{H}$ thanks to its Fourier coefficients representation. The technical construction of $\bar{u}$ is described in Section \ref{sec:constructionu0}. Given $\bar{u}$, we then construct an injective $\mathbb{A} : L^2_e \to \mathcal{H}$ as an approximate inverse for the Fréchet derivative $D\mathbb{F}(\bar{u})$. Then, our goal is to determine a radius $r>0$ such that $\mathbb{T}$, given as 
\[
\mathbb{T}(u) \bydef u - \mathbb{A} \mathbb{F}(u),
\]
is well defined on $\overline{B_r(\bar{u})}  \to \overline{B_r(\bar{u})}$ and contracting (where $B_r(\bar{u})$ is the open ball of $\mathcal{H}$ centered at $\bar{u}$ and of radius $r$). The Banach fixed point theorem then provides the existence of a unique zero of $\tilde{u} \in \mathcal{H}$ of $\mathbb{F}$ in $\overline{B_r(\bar{u})}.$ 

In order to verify the existence of such a radius $r$, we make use of the following result. Its proof is for instance given in \cite{Cadiot2023}.
\begin{theorem}\label{thm:existence}
    Let $\mathbb{F}:\mathcal{H} \to L^2_e$ be given as in \eqref{def : zero finding F}   and let $\mathbb{A}: L^2_e \rightarrow \mathcal{H}$ be a bounded linear operator. Assume that $\mathcal{Y}_0, \mathcal{Z}_1 \in (0,\infty)$ and let $\mathcal{Z}_2: (0,\infty) \to [0,\infty)$ satisfy 
    \begin{align}
        \|\mathbb{A}\mathbb{F}(\bu)\|_\mathcal{H} &\leq \mathcal{Y}_0, \label{eq:Y0}\\
        \|I-\mathbb{A}D\mathbb{F}(\bu)\|_\mathcal{H}& \leq \mathcal{Z}_1,\label{eq:Z1}\\
        \|\mathbb{A}(D\mathbb{F}(\bu+h)-D\mathbb{F}(\bu))\|_\mathcal{H} &\leq \mathcal{Z}_2(r)r \quad \text{ for all } h\in \overline{B_r(0)} \subset \mathcal{H} \text{ and all } r>0. \label{eq:Z2bound}
    \end{align}
    If there exists $r>0$ such that 
    \begin{align}\label{eq : condition radii localized}
        \frac{1}{2}\mathcal{Z}_2(r)r^2-(1-\mathcal{Z}_1)r+\mathcal{Y}_0<0 \text{ and } \mathcal{Z}_1+\mathcal{Z}_2(r)r<1,
    \end{align}
    then there exists a unique $\tilde{u}\in \overline{B_r(\bu)} \subset \mathcal{H}$ for which $\mathbb{F}(\tilde{u})=0$.
\end{theorem}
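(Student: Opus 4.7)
The plan is to recast $\mathbb{F}(\tilde u)=0$ as a fixed-point equation for the Newton-like operator $\mathbb{T}(u):=u-\mathbb{A}\mathbb{F}(u)$ on the closed ball $\overline{B_r(\bu)}\subset\mathcal{H}$, and then apply the Banach fixed-point theorem. Zeros of $\mathbb{F}$ inside the ball correspond to fixed points of $\mathbb{T}$ provided $\mathbb{A}$ is injective, and injectivity follows from the explicit construction of $\mathbb{A}$ (or from $\mathcal{Z}_1<1$, which renders $\mathbb{A}D\mathbb{F}(\bu)$ invertible by a Neumann series applied to $I-(I-\mathbb{A}D\mathbb{F}(\bu))$). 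The two inequalities in \eqref{eq : condition radii localized} are tailored to provide, respectively, self-mapping of $\overline{B_r(\bu)}$ by $\mathbb{T}$ and strict contraction.

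For the self-mapping step I would take $u=\bu+h$ with $\|h\|_\mathcal{H}\leq r$ and exploit the Taylor-type identity
\[
\mathbb{T}(\bu+h)-\bu = -\mathbb{A}\mathbb{F}(\bu) + (I-\mathbb{A}D\mathbb{F}(\bu))h - \int_0^1 \mathbb{A}\bigl(D\mathbb{F}(\bu+th)-D\mathbb{F}(\bu)\bigr)h\,dt,
\]
obtained by applying the fundamental theorem of calculus to $t\mapsto \mathbb{A}\mathbb{F}(\bu+th)$. Bounds \eqref{eq:Y0} and \eqref{eq:Z1} directly control the first two summands. For the integral, I would apply \eqref{eq:Z2bound} pointwise along the path $th\in\overline{B_{tr}(0)}$ (using the monotonicity of $\mathcal{Z}_2$ in $r$, which holds for the polynomial bounds that appear in practice), producing the sharp factor $\tfrac12$ upon integration in $t$. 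Collecting the three contributions gives
\[
\|\mathbb{T}(\bu+h)-\bu\|_\mathcal{H}\leq \mathcal{Y}_0 + \mathcal{Z}_1 r + \tfrac12 \mathcal{Z}_2(r) r^2,
\]
and the first inequality in \eqref{eq : condition radii localized} is precisely the statement that this upper bound is strictly less than $r$.

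For the contraction estimate, an analogous identity yields $\mathbb{T}(u_1)-\mathbb{T}(u_2) = \int_0^1 (I-\mathbb{A}D\mathbb{F}(u_2+s(u_1-u_2)))(u_1-u_2)\,ds$. Splitting each integrand as $(I-\mathbb{A}D\mathbb{F}(\bu))+\mathbb{A}(D\mathbb{F}(\bu)-D\mathbb{F}(u_2+s(u_1-u_2)))$ and noting that $u_2+s(u_1-u_2)-\bu$ remains in $\overline{B_r(0)}$ for every $s\in[0,1]$, the bounds \eqref{eq:Z1} and \eqref{eq:Z2bound} combine to give $\|\mathbb{T}(u_1)-\mathbb{T}(u_2)\|_\mathcal{H}\leq(\mathcal{Z}_1+\mathcal{Z}_2(r)r)\|u_1-u_2\|_\mathcal{H}$, and the second inequality in \eqref{eq : condition radii localized} forces this Lipschitz constant to be strictly less than one. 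Banach's fixed-point theorem on the complete metric space $\overline{B_r(\bu)}$ then produces a unique fixed point $\tilde u$, which is the asserted unique zero of $\mathbb{F}$ in the ball.

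The main subtlety I anticipate is extracting the factor $\tfrac12$ in the integral remainder: one must apply \eqref{eq:Z2bound} at the varying radius $tr$ (or reinterpret it as a Lipschitz estimate on $h\mapsto \mathbb{A}D\mathbb{F}(\bu+h)$) rather than the coarser uniform bound $\mathcal{Z}_2(r)r$, which would replace the quadratic polynomial in \eqref{eq : condition radii localized} by a weaker one and typically be too stringent to be verified in practice. Once this is handled, the rest of the argument reduces to two careful applications of the triangle inequality, and the result drops out.
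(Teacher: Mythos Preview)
Your proposal is correct and matches the standard Newton--Kantorovich/radii-polynomial argument that the paper defers to \cite{Cadiot2023}: set up the fixed-point map $\mathbb{T}=I-\mathbb{A}\mathbb{F}$, use the integral Taylor identity to obtain the self-mapping estimate $\mathcal{Y}_0+\mathcal{Z}_1 r+\tfrac12\mathcal{Z}_2(r)r^2<r$, and use the Lipschitz bound $\mathcal{Z}_1+\mathcal{Z}_2(r)r<1$ for contraction. One minor caveat: from $\mathcal{Z}_1<1$ the Neumann series gives invertibility of $\mathbb{A}D\mathbb{F}(\bu)$ on $\mathcal{H}$, which yields \emph{surjectivity} of $\mathbb{A}$, not injectivity; to conclude that the fixed point actually satisfies $\mathbb{F}(\tilde u)=0$ you still need either the explicit construction of $\mathbb{A}$ (your first alternative, which is what the paper uses) or a short Fredholm step (since $D\mathbb{G}(\bu)\mathbb{L}^{-1}$ is compact, $D\mathbb{F}(\bu)$ has index zero, hence is an isomorphism, and then $\mathbb{A}=(\mathbb{A}D\mathbb{F}(\bu))\,D\mathbb{F}(\bu)^{-1}$ is invertible).
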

Observe that we have transformed our problem into computing explicit values for the bounds $\mathcal{Y}_0, \mathcal{Z}_1, \mathcal{Z}_2$ of the previous theorem. Then, a value for the radius $r>0$ is obtained thanks to \eqref{eq : condition radii localized}.

The core of our approach becomes determining $\bar{u}\in \mathcal{H}$ and $\mathbb{A} : L^2_e \to \mathcal{H}$ such that the bounds of Theorem \ref{thm:existence} can be computed explicitly. We describe the construction of these objects in the next sections.
\subsection{Construction of an approximate solution}\label{sec:constructionu0}
First, we demonstrate how to construct the approximate solution $\bar{u} \in \mathcal{H}$. Our approach is based on that presented in \cite{Cadiot_2025_GS, Cadiot2023, Cadiot2024}. We construct $\bar{u}$ using a cosine-cosine series representation on $\om = (-d_1,d_1) \times (-d_2,d_2)$ as 
\begin{align}\label{def : ansatz for bar u}
  \bar{u}(x) \bydef  \mathbb{1}_{\Omega_0}(x)\sum_{n\in \mathbb{N}_0^2} \alpha_n \bar{U}_n\cos\left(2\pi \tilde{n}_1 x_1\right)\cos\left(2\pi \tilde{n}_2 x_2\right)
\end{align}
where $\bar{U} = (\bar{U}_n)_{n \in \mathbb{N}_0^2}$ is the Fourier coefficients representation of $\bar{u}$. Note that by construction supp$(\bar{u}) \subset \overline{\om}$. The sequence $\bbu$ is determined numerically and contains a finite number of non-zero coefficients. In order to describe our numerical truncations, we define the following projection operators : given $\mathcal{N} = (\mathcal{N}_1, \mathcal{N}_2) \in \mathbb{N}^2$
\begin{align}\label{def : projection operators}
    \left( \pi^\mathcal{N}(U)\right)_n = \begin{cases}
 U_n & \text{ for } n \in I_{\mathcal{N}},\\ 0 & \text{ for }n \notin I_{\mathcal{N}}
    \end{cases}
~ \text{ and } ~ 
    \left( \pi_\mathcal{N}(U)\right)_n = \begin{cases}
 0 &  \text{ for } n \in I_{\mathcal{N}},\\ U_n & \text{ for } n \notin I_{\mathcal{N}}.
    \end{cases}
\end{align}
where $ I_\mathcal{N} \bydef \{n \in \mathbb{N}_0^2 : 0\leq n_1\leq \mathcal{N}_1,0\leq n_2 \leq \mathcal{N}_2\}$.

Let us fix $N^0 = (N^0_1,N^0_2) \in \mathbb{N}^2$ to be the numerical truncation size of $\bbu$. Then,
using \eqref{def : projection operators}, we assume that $\bbu = \pi^{N^0} \bbu$. That is, $\bbu$ has a vector representation of size $(N_1^0+1)(N_2^0+1).$

Now, note that by construction $\bu \in L^2_e$, but we do not necessarily have $\bu \in \mathcal{H}.$ Indeed, $\bu$ might not be smooth at $x_1 = \pm d_1$. By periodicity of $\bu$ on $\om$, smoothness at $x_1 = \pm d_1$ is equivalent to smoothness only at $d_1$. Now, in order to obtain the required regularity for $\mathcal{H}$, we need 
\begin{align*}
    \partial_{x_1}^k \bu(d_1,x_2) = 0 \text{ for all } x_2 \in (-d_2,d_2) \text{ and all } k \in \{0,1,2,3\}. 
\end{align*}
Since $\bu$ has a cosine-cosine representation, we readily have $\partial_{x_1}^k \bu(d_1,x_2) = 0$ for all  $x_2 \in (-d_2,d_2)$  and all  $k \in \{1,3\}$. Consequently, it remains to ensure that 
\begin{align}\label{eq : trace zero condition function}
    \partial_{x_1}^k \bu(d_1,x_2) = 0 \text{ for all } x_2 \in (-d_2,d_2) \text{ and all } k \in \{0,2\}. 
\end{align}
As $\bu$ is represented by $\bbu$ on $\om$, \eqref{eq : trace zero condition function} can be translated to equations on $\bbu$ :
\begin{align*}
    \sum_{n_1 = -N^0_1}^{N^0_1} \alpha_{n_1,n_2} (-1)^{n_1} (2\pi \tilde{n}_1)^k \bbu_{n_1, n_2} = 0 \text{ for all } n_2 \in -N^0_2, \dots, N^0_2 \text{ and all } k \in \{0,2\},
\end{align*}
where we have used that $\bbu = \pi^N \bbu$. Note that this is equivalent to having $\bbu \in \mathrm{Ker}(\mathcal{T})$, where 
\[
\mathcal{T}\bbu = \begin{pmatrix}
   \mathcal{T}_0\bbu\\
   \mathcal{T}_2\bbu
\end{pmatrix}
 \bydef 
 \begin{pmatrix}
    \left(\sum_{n_1 = -N^0_1}^{N^0_1} \alpha_{n_1,n_2} (-1)^{n_1}  \bbu_{n_1, n_2}\right)_{n_2 \in -N^0_2,\dots, N^0_2}\\
    \left(\sum_{n_1 = -N^0_1}^{N^0_1} \alpha_{n_1,n_2} (-1)^{n_1} (2\pi \tilde{n}_1)^2 \bbu_{n_1, n_2}\right)_{n_2 \in -N^0_2,\dots, N^0_2}
\end{pmatrix}.
\]
Consequently, $\bbu$ needs to be projected into the kernel of the matrix $\mathcal{T}$ of size $(2N^0_1+1)(2N^0_2+1)$. Such an operation has been described in \cite{Cadiot2023, Cadiot2024} and is implemented in \cite{julia_our_code}.

Overall, we ensure that $\bbu \in \mathrm{Ker}(\mathcal{T})$ and obtain that
\begin{equation}\label{def : approx solution}
    \bu \bydef \gamma^\dagger\left(\bbu\right) \in \mathcal{H} ~ \text{ and } ~ \bbu = \pi^N \bbu.
\end{equation}
By construction, we have supp$(\bu) \subset \om$.

Now that $\bu$ has been constructed, one of the major difficulties in the treatment of \eqref{eq : suspension bridge equation} is the evaluation of the nonpolynomial nonlinearity $e^{\bu}$. Indeed, in order to evaluate the bound $\mathcal{Y}_0$ \eqref{eq:Y0}, we need to be able to compute rigorously such a quantity. This is the objective of the next section.

\subsection{Computing the nonlinearity}\label{sec:nonlin}

The existence proof requires the computation of the nonlinear part ${G}(U)=e^{U}-U-e_0$. However, the Fourier coefficients corresponding to the exponential function cannot be determined exactly, as there are infinitely many nonzero Fourier coefficients. Moreover, even the computation of a finite number of Fourier coefficients cannot be performed exactly due to the nonpolynomial nature of the nonlinearity. For small indices, we approximate these coefficients using discrete Fourier transforms and establish an upper bound on the associated error, referred to as the aliasing error. For larger indices, we use a general bound, which is only suitable for the tail part because of the decay of the bound. To compute the Fourier coefficients of ${G}(\bbu)$, we first determine the Fourier coefficients $e^{\bbu}$ corresponding to the function $e^{\bu}$. Following the procedure outlined in Section 3 of \cite{Aalst2024}, we obtain the expression: 
\begin{align}\label{eq:alias}
   \left( e^{\bbu} \right)_n \in \begin{cases}
        {(e^{\bbu})}^{\mathrm{FFT}}_n+{C}[-\varepsilon_n,\varepsilon_n] & \text{ if } 0 \leq n_1\leq N^0_1, 0 \leq  n_2 \leq N^0_2,\\
        [-\frac{{C}}{\nu_1^{|n_1|}\nu_2^{|n_2|}},\frac{{C}}{\nu_1^{|n_1|}\nu_2^{|n_2|}}] & \text{ otherwise} 
    \end{cases}
\end{align}
 where ${(e^{\bbu})}^{\mathrm{FFT}}$ is the numerical approximation of $e^{\bbu}$ calculated using discrete Fourier transforms, $C>0$, $\nu_1,\nu_2>1$ and $\varepsilon_n>0$ for all $n$. The procedure of computing $C$ can be found in \cite{Aalst2024}. The parameters $\nu_1,\nu_2$ are related to the domain of analyticity, i.e., $\bbu$ is analytic on a strip $\{z \in \mathbb{C}^2 : \mathrm{Im}(z_1)<\rho_1, \mathrm{Im}(z_2)<\rho_2\}$ and $\nu_1 = e^{\rho_1-\overline{\varepsilon}_1}$, $\nu_2 = e^{\rho_2-\overline{\varepsilon}_2}$ for small $\overline{\varepsilon}_1,\overline{\varepsilon}_2>0$ (cf. Section 3.1 in \cite{Aalst2024}). Note that ${G}(\bbu)$ is analytic for $\bbu$ analytic.

Moreover, let $N^\mathrm{FFT}$ denote how many Fourier modes are considered when doing Fourier transformations such that $N^\mathrm{FFT}_1$ and $N^\mathrm{FFT}_2$ are powers of $2$ and $N^\mathrm{FFT} \gg N^0$. Then we have \begin{align*}
\varepsilon_n:=\frac{2\nu_1^{|n_1|}\nu_2^{|n_2|}\left(\nu_1^{-2N_1^\mathrm{FFT}}+\nu_2^{-2N_2^\mathrm{FFT}}\right)}{\left(1-\nu_1^{-2N_1^\mathrm{FFT}}\right)\left(1-\nu_2^{-2N_2^\mathrm{FFT}}\right)}.  
\end{align*}

\begin{remark}
Note that the support of $\bu$ is on $\overline{\Omega_0}$, which is a finite domain. As a consequence, $\mathbb{G}(\bu)$ also has its support on $\overline{\Omega_0}$. Therefore, the actual representation of the nonlinear term is on the infinite strip, even though Fourier transformations are carried out on finite domains only.
\end{remark}

\subsection{Approximate inverse}

In order to be able to apply Theorem \ref{thm:existence}, it remains to construct an approximate inverse $\mathbb{A}$ for $D\mathbb{F}(\bu)$. For this purpose, we recall the construction established in \cite{Cadiot2023}. Let $N = (N_1, N_2) \in \mathbb{N}^2$ be the numerical truncation size of our operators. In practice, we choose $N_1 \leq N^0_1$ and $N_2 \leq N^0_2$, where $N^0$ is the size of the numerical truncation of $\bar{U}$.

Numerically, we construct a ``matrix'' $B^N$ approximating the inverse of $\pi^N DF(\bbu)L^{-1}\pi^N$. In particular, $B^N$ is finite-dimensional in the sense that $B^N = \pi^N B^N \pi^N$. Then, using \eqref{def : gamma and gamma dagger}, we define $\mathbb{B} : L^2_e \to L^2_e$ as 
\begin{align*}
    \mathbb{B} \bydef \mathbb{1}_{\Omega \backslash \Omega_0}+\Gamma^\dagger(\pi_N+B^N).
\end{align*}
Here, $\mathbb{1}_{\Omega \backslash \Omega_0}$ has to be understood as a multiplication operator by the characteristic function $\mathbb{1}_{\Omega \backslash \Omega_0}$. In particular, $\mathbb{B}$ approximates the inverse of $D\mathbb{F}(\bu)\mathbb{L}^{-1}.$ Finally, we define $\mathbb{A}$ as 
\begin{align}\label{def : approx inverse A}
    \mathbb{A} \bydef \mathbb{L}^{-1}\mathbb{B} = \mathbb{L}^{-1}\left(\mathbb{1}_{\Omega \backslash \Omega_0}+\Gamma^\dagger(\pi_N+B^N)\right) : L^2_e \to \mathcal{H}.
\end{align}
Note that $\mathbb{A}$ is well defined as $\mathbb{L} : \mathcal{H} \to L^2_e$ is an isometric isomorphism. Computing the bound $\mathcal{Z}_1$ defined in \eqref{eq:Z1}, we will show that \eqref{def : approx inverse A} is an accurate choice for an approximate inverse. In particular, it is well suited for our analysis since it allows us to compute the bounds of Theorem \ref{thm:existence} explicitly. This will be the goal of the next section. We can already recall a result from \cite{Cadiot2023} providing the operator norm of $\mathbb{A}$ thanks to the norm of the matrix $B^N$:
\begin{align}\label{eq:Anorm}
\|\mathbb{A}\|_{2,\mathcal{H}}=\|\mathbb{B}\|_{2} = \max\{1,\|B^N\|_{2}\}.
\end{align}

\section{Computation of the bounds}\label{sec : computation bounds}

In this section, we justify our choices for the approximate objects constructed above by providing explicit formulas for the bounds of Theorem \ref{thm:existence}. Before stating the first bound, we recall that $N^0$  corresponds to the truncation dimension for sequences and $N$ for operators.

\begin{lemma}[\bf Bound \boldmath$\mathcal{Y}_{0}$\unboldmath]
  Let $\mathcal{Y}_0$ be satisfying
 % \begin{align*}
  %    \mathcal{Y}_0\geq |\Omega_0|^{\frac{1}{2}}&\Bigg( \|B^NF(U_0)\|_2^2+\|(\pi^{N_0}-\pi^N)(LU_0+G(U_0))\|_2^2\\
 %     &+ \sum_{\substack{n \in I_{N^0}^+\\ n \notin I_{N_0}^+}} \alpha_n \left( |g_n^\mathrm{FFT}|+C|\varepsilon_n|\right)^2\\&+ C^2\frac{2\nu_1^{-2N^0_1-2}(1+\nu_2^{-2})+2\nu_2^{-2N^0_2-2}(1+\nu_1^{-2})-4\nu_1^{-2N_1^0-2}\nu_2^{-2N_2^0-2}}{(1-\nu_1^{-2})(1-\nu_2^{-2})}\Bigg)^{\frac{1}{2}}.
 % \end{align*}
    \begin{align*}
      \mathcal{Y}_0\geq \sqrt{2d_1}&\Bigg( \|B^NF(\bbu)\|_{2}^2+\|(\pi^{N^0}-\pi^N)(L\bbu+G(\bbu))\|_{2}^2\\
     &+ C^2\frac{2\nu_1^{-2N^0_1-2}(1+\nu_2^{-2})+2\nu_2^{-2N^0_2-2}(1+\nu_1^{-2})-4\nu_1^{-2N_1^0-2}\nu_2^{-2N_2^0-2}}{(1-\nu_1^{-2})(1-\nu_2^{-2})}\Bigg)^{\frac{1}{2}}.
  \end{align*}
  Then $\mathcal{Y}_0$ satisfies \eqref{eq:Y0}.
\end{lemma}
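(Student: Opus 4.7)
The plan is to exploit the isometry $\mathbb{L}:\mathcal{H}\to L^2_e$ and the fact that the approximate solution and its nonlinear residual live on $\Omega_0$, so that the bound reduces to a computation purely on Fourier coefficient space. Since $\|\mathbb{A}\mathbb{F}(\bu)\|_\mathcal{H} = \|\mathbb{L}^{-1}\mathbb{B}\mathbb{F}(\bu)\|_\mathcal{H} = \|\mathbb{B}\mathbb{F}(\bu)\|_2$, the task is to estimate $\|\mathbb{B}\mathbb{F}(\bu)\|_2$. Because $\mathrm{supp}(\bu)\subset\overline{\Omega_0}$ and $\mathbb{G}(\bu)=e^{\bu}-\bu-1$ vanishes wherever $\bu=0$, the full residual $\mathbb{F}(\bu)$ is supported in $\overline{\Omega_0}$; hence $\mathbb{1}_{\Omega\setminus\Omega_0}\mathbb{F}(\bu)=0$ and the operator $\mathbb{B}$ reduces on this input to $\gamma^\dagger(\pi_N+B^N)\gamma\mathbb{F}(\bu)$. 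Using the trace conditions \eqref{eq : trace zero condition function} built into $\bbu$ (which guarantee the periodic differentiation matches $\mathbb{L}$ on $\Omega_0$), one identifies $\gamma\mathbb{F}(\bu) = L\bbu + G(\bbu) = F(\bbu)$.

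Next, I would use the Parseval-type identity $\|\gamma^\dagger U\|_2 = \sqrt{2d_1}\|U\|_2$ (which follows by computing $\|\gamma^\dagger U\|_{L^2(\Omega)}^2 = |\Omega_0|\|U\|_2^2$ and comparing with the weight $2d_2$ between $\|\cdot\|_2$ and $\|\cdot\|_{L^2(\Omega)}$ on $L^2_e$) to obtain
\[
\|\mathbb{A}\mathbb{F}(\bu)\|_\mathcal{H} = \sqrt{2d_1}\,\|(\pi_N+B^N)F(\bbu)\|_2.
\]
Since $B^N=\pi^N B^N\pi^N$, the sequences $B^N F(\bbu)$ and $\pi_N F(\bbu)$ are supported respectively on $I_N$ and its complement, so they are $\ell^2$-orthogonal and
\[
\|(\pi_N+B^N)F(\bbu)\|_2^2 = \|B^N F(\bbu)\|_2^2 + \|\pi_N F(\bbu)\|_2^2.
\]
The first term appears directly in the stated bound. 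For the second, I use that $\bbu=\pi^{N^0}\bbu$ and $\pi_N e_0=0$ (since $N\geq 0$) to split
\[
\pi_N F(\bbu) = (\pi^{N^0}-\pi^N)(L\bbu+G(\bbu)) + \pi_{N^0}e^{\bbu},
\]
where the first summand is supported in $I_{N^0}\setminus I_N$ and the second outside $I_{N^0}$, so they are again orthogonal. This recovers the middle term $\|(\pi^{N^0}-\pi^N)(L\bbu+G(\bbu))\|_2^2$ of the bound exactly.

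The remaining piece is to bound $\|\pi_{N^0}e^{\bbu}\|_2^2$, and this is the main technical step because it is the only place where the aliasing/decay estimate \eqref{eq:alias} enters. Applying the tail estimate $|e^{\bbu}_n|\leq C\nu_1^{-n_1}\nu_2^{-n_2}$ valid for $n\notin I_{N^0}$, I get
\[
\|\pi_{N^0}e^{\bbu}\|_2^2 \leq C^2 \sum_{n\notin I_{N^0}}\alpha_n\,\nu_1^{-2n_1}\nu_2^{-2n_2}.
\]
Writing $\{n\notin I_{N^0}\} = \{n_1>N^0_1\}\cup\{n_2>N^0_2\}$ and applying inclusion–exclusion, the sum factorizes into products of one-dimensional geometric series. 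Using $\sum_{n_2\geq 0}\alpha^{(2)}_{n_2}\nu_2^{-2n_2} = (1+\nu_2^{-2})/(1-\nu_2^{-2})$ and the analogous one in $n_1$ for the pure tails, and $\sum_{n_i>N^0_i}2\nu_i^{-2n_i} = 2\nu_i^{-2N^0_i-2}/(1-\nu_i^{-2})$ for the factors in the intersection, the three contributions assemble to exactly the third term in the stated bound. Summing the three squared $\ell^2$-pieces and taking the square root (multiplied by $\sqrt{2d_1}$) yields the claimed inequality. The only place a careful reader might stumble is the $\alpha_n$ bookkeeping in the tail sum, so I would present that step explicitly.
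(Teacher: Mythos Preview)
Your proposal is correct and follows essentially the same route as the paper: reduce to $\sqrt{2d_1}\,\|(\pi_N+B^N)F(\bbu)\|_2$ via the support of $\mathbb{F}(\bu)$ in $\overline{\Omega_0}$ and Parseval, split orthogonally into $B^NF(\bbu)$ and $\pi_N F(\bbu)$, and then control the tail $\pi_{N^0}e^{\bbu}$ by the geometric-series estimate from \eqref{eq:alias}. Your explicit orthogonal decomposition $\pi_N F(\bbu)=(\pi^{N^0}-\pi^N)(L\bbu+G(\bbu))+\pi_{N^0}e^{\bbu}$ is in fact a clean justification of the inequality the paper states without detail, and your inclusion--exclusion computation of the tail sum reproduces exactly the closed-form expression quoted from \cite{Aalst2024}.
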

\begin{proof}
As $\mathrm{supp}(\bu)\subset \overline{\Omega_0}$, we have that $\mathrm{supp}(\mathbb{G}(\bu))\subset \overline{\Omega_0}$ since $e^u-u-1=\sum_{k=2}^\infty \frac{u^k}{k!}$. Hence, we can apply Parseval's identity and  follow the proof of Lemma 4.11 in \cite{Cadiot2023} to obtain
\begin{align*}
    \|\mathbb{A}\mathbb{F}(\bu)\|_\mathcal{H}=\|\mathbb{L}\mathbb{A}\mathbb{F}(\bu)\|_{2}=\|\mathbb{B}\mathbb{F}(\bu)\|_{2}=\sqrt{2d_1}\|BF(\bbu)\|_{2}\end{align*}
    where
\begin{align}\label{eq:BF}
    \|BF(\bbu)\|_{2}^2=\|B^NF(\bbu)\|_{2}^2+\|(\pi^{N^0}-\pi^N)L\bbu+\pi_NG(\bbu)\|_{2}^2.
\end{align}
Note that the nonlinear part of $F(\bbu)$ in $\|B^NF(\bbu)\|_{2}^2$ can be calculated and bounded using \eqref{eq:alias}. 

For the second term in the right-hand side of \eqref{eq:BF} we write 
\begin{align*}
    \|(\pi^{N^0}-\pi^N)L\bbu+\pi_NG(\bbu)\|_{2}^2&\leq \|(\pi^{N^0}-\pi^N)(L\bbu+G(\bbu))\|_{2}^2\\
    &+\sum_{ n \notin I_{N^0}}  \alpha_n \frac{C^2}{\nu_1^{2|n_1|}\nu_2^{2|n_2|}}.
\end{align*}
The last summation can be calculated using a geometric series argument as was done in Lemma 3.4 of \cite{Aalst2024}. Hence,
\begin{align*}
    \sum_{ n \notin I_{N^0}} &\alpha_n \frac{C^2}{\nu_1^{2|n_1|}\nu_2^{2|n_2|}}\\
    &=C^2\frac{2\nu_1^{-2N^0_1-2}(1+\nu_2^{-2})+2\nu_2^{-2N^0_2-2}(1+\nu_1^{-2})-4\nu_1^{-2N_1^0-2}\nu_2^{-2N_2^0-2}}{(1-\nu_1^{-2})(1-\nu_2^{-2})}.
\end{align*}
\end{proof}

\begin{remark}
   Note that for the second term in the $\mathcal{Y}_0$-bound, we can use \eqref{eq:alias} to state the interval that contains this term as follows:
  $$\|(\pi^{N^0}-\pi^N)(L\bbu+G(\bbu))\|_{2}^2\in \sum_{\substack{n \in I_{N^0}\\ n \notin I_{N}}} \alpha_n \left( l\left(\frac{n}{2d}\right)\bbu_n+(e^{\bbu})_n^\mathrm{FFT}+C[-\varepsilon_n,\varepsilon_n]\right)^2.$$
  The right-hand side is what is used in the (interval arithmetic) computation of the $\mathcal{Y}_0$-bound in \cite{julia_our_code}.
\end{remark}

We now turn to the computation of $\mathcal{Z}_2(r)$.

\begin{lemma}[\bf Bound \boldmath$\mathcal{Z}_{2}$\unboldmath]\label{lem : Z2 bound}
Let $r>0$ and let $\mathcal{Z}_{2}(r)$ be satisfying 
   \begin{align*}
   \mathcal{Z}_2(r)\geq \kappa_1\frac{e^{\kappa_2 r}-1}{r}\max\left\{\| {B}e^{\mathbb{M}_{\bbu}}\|_2, 1\right\},
   \end{align*}
      where $\kappa_1$ and $\kappa_2$ are given in Lemma \ref{lem : banach algebra}. Then $\mathcal{Z}_2(r)$ satisfies \eqref{eq:Z2bound}.
\end{lemma}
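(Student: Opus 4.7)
The plan is to trace how the operator $\mathbb{A}(D\mathbb{F}(\bu+h)-D\mathbb{F}(\bu))$ acts on an arbitrary test vector $w\in\mathcal{H}$ and reduce the required operator-norm bound to manageable sequence-space quantities. Since $\mathbb{G}(u)=e^u-u-1$, the Fréchet derivative difference simplifies to a multiplication operator,
\[
\bigl(D\mathbb{F}(\bu+h)-D\mathbb{F}(\bu)\bigr)w = \bigl(e^{\bu+h}-e^{\bu}\bigr)w = e^{\bu}(e^{h}-1)w,
\]
so using the isometry $\mathbb{L}:\mathcal{H}\to L^2_e$ (hence $\|\mathbb{A}g\|_\mathcal{H}=\|\mathbb{B}g\|_2$) the whole problem reduces to estimating $\|\mathbb{B}(e^{\bu}(e^{h}-1)w)\|_2$ in terms of $\|w\|_\mathcal{H}$.

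Next I would exploit the two-piece structure of $\mathbb{B}$, namely $\mathbb{B}g=\mathbb{1}_{\Omega\setminus\om}g+\gamma^\dagger(\pi_N+B^N)\gamma(g)$. Because the two summands have disjoint support in $\Omega$, they are orthogonal in $L^2_e$ and
\[
\|\mathbb{B}(e^{\bu}(e^{h}-1)w)\|_2^{\,2}=\|\mathbb{1}_{\Omega\setminus\om}(e^{h}-1)w\|_2^{\,2}+\|\gamma^\dagger(\pi_N+B^N)\gamma(e^{\bu}(e^{h}-1)w)\|_2^{\,2},
\]
where I used that $\bu\equiv 0$ outside $\om$ to eliminate $e^{\bu}$ from the first term. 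That first term is bounded pointwise by $\|e^{h}-1\|_{\infty}\|w\|_{L^2(\Omega\setminus\om)}\le (e^{\kappa_2 r}-1)\|w\|_{L^2(\Omega\setminus\om)}$, via the proof of Lemma~\ref{lem : banach algebra}. For the second term, the key identification is that, on $\om$, the function $e^{\bu|_{\om}}$ has Fourier coefficients equal to the sequence-level exponential $e^{\bbu}=\sum_k \bbu^{*k}/k!$; hence the discrete convolution theorem on the periodic cell gives
\[
\gamma\bigl(e^{\bu}(e^h-1)w\bigr) = e^{\bbu}*\gamma\bigl((e^h-1)w\bigr) = e^{\mathbb{M}_{\bbu}}\gamma\bigl((e^h-1)w\bigr).
\]
Combining this with the normalization identity $\|\gamma^\dagger U\|_2=\sqrt{2d_1}\|U\|_2$ (and the reciprocal $\|\gamma(g)\|_2\le \tfrac{1}{\sqrt{2d_1}}\|g\|_{L^2(\om)}$) the second term is controlled by
\[
\|(\pi_N+B^N)\,e^{\mathbb{M}_{\bbu}}\|_2\,\|(e^{h}-1)w\|_{L^2(\om)}\;\le\; \|Be^{\mathbb{M}_{\bbu}}\|_2\,(e^{\kappa_2 r}-1)\,\|w\|_{L^2(\om)},
\]
with $B=\pi_N+B^N$.

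To finish, I would add the two orthogonal contributions, factor out the common quantity $(e^{\kappa_2 r}-1)^2$, and bound the remainder by $\max\{1,\|Be^{\mathbb{M}_{\bbu}}\|_2^{\,2}\}$ times $\|w\|_{L^2(\Omega\setminus\om)}^{\,2}+\|w\|_{L^2(\om)}^{\,2}=\|w\|_2^{\,2}\le \kappa_1^{\,2}\|w\|_\mathcal{H}^{\,2}$ (again by the estimate from Lemma~\ref{lem : banach algebra}). Dividing by $r$ yields exactly $\mathcal{Z}_2(r)\ge \kappa_1\,(e^{\kappa_2 r}-1)/r\,\max\{\|Be^{\mathbb{M}_{\bbu}}\|_2,1\}$. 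The step I expect to require the most care is the sequence/function translation $\gamma(e^{\bu}g)=e^{\mathbb{M}_{\bbu}}\gamma(g)$: it depends on $e^{\bbu}\in\ell^1$ (legitimate here because $\bbu$ has finite support), on identifying $e^{\bu|_{\om}}$ with the $\om$-periodic extension whose coefficients are $e^{\bbu}$, and on carefully restricting the product $e^{\bu}(e^h-1)w$ from $\Omega$ to $\om$ before invoking the convolution theorem. Once that identification is justified, the remaining estimates are direct applications of Lemma~\ref{lem : banach algebra} and the Pythagorean split built into the definition of $\mathbb{B}$.
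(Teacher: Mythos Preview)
Your proof is correct and follows essentially the same route as the paper. The only difference is packaging: the paper first isolates the operator factor, bounding $\|\mathbb{B}e^{\mathbb{M}_{\bu}}\|_2\le\max\{\|Be^{\mathbb{M}_{\bbu}}\|_2,1\}$ by a one-line appeal to the argument behind \eqref{eq:Anorm}, and then applies Lemma~\ref{lem : banach algebra} to $(e^h-1)w$; you instead carry the test function through the orthogonal split of $\mathbb{B}$ explicitly and apply the $\|\cdot\|_\infty$ and $\|\cdot\|_2\le\kappa_1\|\cdot\|_\mathcal{H}$ estimates at the end. The underlying ingredients (disjoint-support Pythagoras, $\bu\equiv 0$ outside $\om$, the convolution identity $\gamma(e^{\bu}g)=e^{\mathbb{M}_{\bbu}}\gamma(g)$, and Lemma~\ref{lem : banach algebra}) are identical, and the step you flag as delicate is exactly the one the paper hides behind its citation of \eqref{eq:Anorm}.
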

\begin{proof}
    Let $h\in \overline{B_r(0)}$. Since $\|u\|_{\mathcal{H}}=\|\mathbb{L}u\|_{{2}}$ and $\mathbb{A} = \mathbb{L}^{-1} \mathbb{B}$ by definition, we have that 
    \begin{align*}
        \|\mathbb{A}(D\mathbb{F}(\bu+h)-D\mathbb{F}(\bu))\|_\mathcal{H}&=\|\mathbb{L}\mathbb{A}(D\mathbb{F}(\bu+h)-D\mathbb{F}(\bu))\|_{\mathcal{H},{2}}\\
        &=\|\mathbb{B}(e^{\mathbb{M}_{\bu+h}}-e^{\mathbb{M}_{\bu}})\|_{\mathcal{H},{2}}.
    \end{align*}
Then, we have
\begin{align*}
   \|\mathbb{B}(e^{\mathbb{M}_{\bu+h}}-e^{\mathbb{M}_{\bu}})\|_{\mathcal{H},{2}}&\leq \underset{\|w\|_\mathcal{H}=1}{\sup}
\|\mathbb{B}e^{\mathbb{M}_{\bu}}\|_{2}\|(e^{h}-1)w\|_{L^2_e}
\end{align*}
where, using \eqref{eq:Anorm}, we also get
\begin{align*}
    \|\mathbb{B}e^{\mathbb{M}_{\bu}}\|_{2} \leq  \max\left\{\| {B}e^{\mathbb{M}_{\bbu}}\|_2, 1\right\}.
\end{align*}
We conclude the proof using Lemma \ref{lem : banach algebra}, from which we deduce that
\begin{align*}
    \|(e^{h}-1)w\|_{L^2_e} \leq \kappa_1 (e^{\kappa_2 r}-1).
\end{align*}
\end{proof}

For the computation of the bound $\mathcal{Z}_1$, we first introduce some extra notation for simplicity.
Let $\overline{v} \bydef e^{\bu} - 1$ and $\bar{V} \bydef \gamma(\bar{v})$. Doing so, notice that $D\mathbb{G}(\bu)$ and $DG(\bbu)$ can be expressed as 
\[
D\mathbb{G}(\bu) = \mathbb{M}_{\bar{v}} \text{ and } DG(\bbu) = M_{\bar{V}},
\]
where we have used the notation in \eqref{def : multiplication operator}. Finally, note that $\bar{V}$ is an infinite number of Fourier coefficients, as explained in Section \ref{sec:nonlin}. In order to exhibit finite-dimensional computations, we define $\bar{V}^N$ and $\bar{v}^N$ as 
\[
\bar{V}^N \bydef \pi^N \bar{V} \text{ and } \bar{v}^N \bydef \gamma^\dagger(\bar{V}^N).
\]
That is, $\bar{V}^N$ correspond to the $N$ first modes of $\bar{V}$. In fact, the difference $\bar{V} - \bar{V}^N$ can be controlled explicitly as a result of the analysis of Section \ref{sec:nonlin}. Then, we recall Lemma 3.6 from \cite{Cadiot2024}, providing the decomposition of $\mathcal{Z}_1$ as a Fourier coefficients computation $Z_1$, and a remainder $\mathcal{Z}_u$ that corresponds to the unboundedness of the domain $\Omega.$
\begin{lemma}[\bf Bound \boldmath$\mathcal{Z}_{1}$\unboldmath]\label{lem : computation of Z1}
Let $\mathcal{Z}_{u}$ and $Z_1$ be bounds satisfying 
\begin{equation} \label{def : Z1 periodic and Zu}
\begin{aligned}
\|\left(\mathbb{L}^{-1}-\Gamma^\dagger(L^{-1})\right) \mathbb{M}_{\overline{v}^N} \mathbb{B}^* \|_2 & \le \mathcal{Z}_{u} ,\\
\|I - (B^N + \pi_N )(I + M_{\bar{V}^N} L^{-1})\|_{2} & \le Z_1.
\end{aligned}
\end{equation}
Then defining $\mathcal{Z}_1$ as 
\begin{equation}\label{def : definition of Z1 theoretical}
   \mathcal{Z}_1 \bydef Z_1 +  \left( \mathcal{Z}_{u} +  \frac{\max\{1, \|B^N\|_{2}\}}{1-\frac{c^4}{4}}\|\bar{V}-\bar{V}^N\|_1 \right),
\end{equation}
 we get that \eqref{eq:Z1} holds, that is $ \|I - \mathbb{A}D\mathbb{F}(\bu)\|_\mathcal{H} \leq \mathcal{Z}_1.$
\end{lemma}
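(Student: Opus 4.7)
The plan is to work throughout in the $L^2_e$ operator norm: since $\mathbb{L}: \mathcal{H} \to L^2_e$ is an isometric isomorphism and $\mathbb{A} = \mathbb{L}^{-1}\mathbb{B}$, one has $\|I - \mathbb{A}D\mathbb{F}(\bu)\|_\mathcal{H} = \|I - \mathbb{B}\,D\mathbb{F}(\bu)\mathbb{L}^{-1}\|_2$. Writing $D\mathbb{F}(\bu)\mathbb{L}^{-1} = I + \mathbb{M}_{\bar{v}}\mathbb{L}^{-1}$ and using the decompositions $\bar{v} = \bar{v}^N + (\bar{v}-\bar{v}^N)$ together with $\mathbb{L}^{-1} = \Gamma^\dagger(L^{-1}) + (\mathbb{L}^{-1}-\Gamma^\dagger(L^{-1}))$, I split this into three pieces:
\[
I - \mathbb{B}\,D\mathbb{F}(\bu)\mathbb{L}^{-1} \;=\; \bigl[I - \mathbb{B} - \mathbb{B}\mathbb{M}_{\bar{v}^N}\Gamma^\dagger(L^{-1})\bigr] \;-\; \mathbb{B}\mathbb{M}_{\bar{v}^N}\bigl(\mathbb{L}^{-1}-\Gamma^\dagger(L^{-1})\bigr) \;-\; \mathbb{B}\mathbb{M}_{\bar{v}-\bar{v}^N}\mathbb{L}^{-1}.
\]
The three summands will be bounded respectively by $Z_1$, by $\mathcal{Z}_u$, and by the tail term appearing in \eqref{def : definition of Z1 theoretical}.

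The core argument concerns the bracket, the \emph{periodic} piece. Since $\bar{v}^N = \gamma^\dagger(\bar{V}^N)$ is supported in $\om$, one obtains $\mathbb{1}_{\Omega\setminus\om}\mathbb{M}_{\bar{v}^N}=0$, so the outer-cutoff part of $\mathbb{B}$ is killed. Moreover, the periodic convolution theorem on $\om$ together with $\gamma\gamma^\dagger = I$ on $\ell^2$ yields both the identification $\mathbb{M}_{\bar{v}^N} = \Gamma^\dagger(M_{\bar{V}^N})$ as operators on $L^2_e$ and the composition rule $\Gamma^\dagger(A)\Gamma^\dagger(B) = \Gamma^\dagger(AB)$. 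Combined with the identity $I - \mathbb{B} = \Gamma^\dagger(I - \pi_N - B^N)$, these collapse the bracket into
\[
I - \mathbb{B} - \mathbb{B}\mathbb{M}_{\bar{v}^N}\Gamma^\dagger(L^{-1}) \;=\; \Gamma^\dagger\bigl(I - (\pi_N + B^N)(I + M_{\bar{V}^N}L^{-1})\bigr),
\]
whose $L^2_e$-operator norm is dominated by $\|\Gamma^\dagger(H)\|_2 \le \|H\|_2$ (a consequence of the matched scalings of $\gamma$ and $\gamma^\dagger$), giving exactly $Z_1$.

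For the second summand, I will use that the symbols of $\mathbb{L}^{-1}$ and $L^{-1}$ are real and that $\bar{v}^N$ is real-valued, so $\mathbb{L}^{-1}$, $\Gamma^\dagger(L^{-1})$, and $\mathbb{M}_{\bar{v}^N}$ are self-adjoint on $L^2_e$; passing to the adjoint then yields $\|\mathbb{B}\mathbb{M}_{\bar{v}^N}(\mathbb{L}^{-1}-\Gamma^\dagger(L^{-1}))\|_2 = \|(\mathbb{L}^{-1}-\Gamma^\dagger(L^{-1}))\mathbb{M}_{\bar{v}^N}\mathbb{B}^*\|_2 \le \mathcal{Z}_u$. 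For the third summand, submultiplicativity of the operator norm together with \eqref{eq:Anorm} gives $\|\mathbb{B}\|_2 = \max\{1,\|B^N\|_2\}$, the standard pointwise bound on cosine series gives $\|\mathbb{M}_{\bar{v}-\bar{v}^N}\|_2 \le \|\bar{v}-\bar{v}^N\|_\infty \le \|\bar{V}-\bar{V}^N\|_1$, and $\|\mathbb{L}^{-1}\|_2 \le 1/(1-c^4/4)$ follows from the minimum of the symbols $l_{n_2}$ (cf. Lemma \ref{lem : results on l}), producing the final summand of $\mathcal{Z}_1$. I expect the main obstacle to be the bookkeeping in the periodic piece: verifying that $\mathbb{B}\mathbb{M}_{\bar{v}^N}\Gamma^\dagger(L^{-1})$ truly collapses into a single $\Gamma^\dagger(\cdot)$ of a Fourier-coefficients operator, which hinges on applying the support of $\bar{v}^N$, the identity $\gamma\gamma^\dagger = I$, and the representation $I - \mathbb{B} = \Gamma^\dagger(I - \pi_N - B^N)$ in precisely the correct order.
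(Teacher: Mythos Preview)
Your proposal is correct and follows precisely the standard decomposition that the paper invokes by citing \cite{Cadiot2024}: the isometry $\|I-\mathbb{A}D\mathbb{F}(\bu)\|_\mathcal{H}=\|I-\mathbb{B}D\mathbb{F}(\bu)\mathbb{L}^{-1}\|_2$, the splitting into the periodic part, the unbounded-domain correction, and the tail term, together with the identities $\mathbb{1}_\om=\Gamma^\dagger(I)$, $\mathbb{M}_{\bar v^N}=\Gamma^\dagger(M_{\bar V^N})$, and $\Gamma^\dagger(A)\Gamma^\dagger(B)=\Gamma^\dagger(AB)$. Since the paper itself only gives a one-line reference, your write-up is in fact more self-contained than the paper's own proof while taking the same route.
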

\begin{proof}
    The proof is given in \cite{Cadiot2024}. Note that the difference in the formulas lies in the fact that $\sup_{\xi \in \R^2} \frac{1}{l(\xi)} = \frac{1}{1 - \frac{c^4}{4}}$ in the case of \eqref{eq : suspension bridge equation}.
\end{proof}

\begin{remark}
    In our implementation, the operator $\mathbb{L}$ acts as a diagonal operator. Its inverse $\mathbb{L}^{-1}$ is computed by dividing each Fourier coefficient by the corresponding value of the symbol $l_{n_2}(\xi_1)$. This operation is well-defined because the symbol is strictly positive for $c\in(0,\sqrt{2})$.
\end{remark}

We recall the computation of the bound $Z_1$, which is given in Lemma 3.9 from \cite{Cadiot2024}. 

\begin{lemma}\label{lem : usual term periodic Z1}
Let $Z^N_1$ and $Z_1$ be such that
\begin{equation}\label{def : upper bound periodic Z1}
\begin{aligned}
     \left(\|\pi^N - B^N(I_d + M_{\bar{V}^N} L^{-1})\pi^{2N}\|_2^2 + \|(\pi^{2N}-\pi^N) M_{\bar{V}^N} L^{-1}\pi^N\|_2^2\right)^{\frac{1}{2}} & \le Z_1^N,\\
    \left((Z_1^N)^2 + \|\bar{V}^N\|_1^2\max_{n \in \mathbb{N}_0^2\setminus I^N}\frac{1}{|l(\tilde{n})|^2}\right)^{\frac{1}{2}}  &\leq Z_1.
\end{aligned}
\end{equation}
Then we have $\left\|I_d - (B^N + \pi_N)\left(I_d + M_{\bar{V}^N}L^{-1}\right)\right\|_2 \leq Z_1$.
   \end{lemma}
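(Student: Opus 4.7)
My plan is to exploit a natural $\pi^N/\pi_N$ decomposition of the operator together with the bandlimited nature of the convolution $M_{\bar V^N}$. First, using $I = \pi^N + \pi_N$ and the structural identity $B^N = \pi^N B^N \pi^N$, I would expand
\[
 T \bydef I - (B^N+\pi_N)(I + M_{\bar V^N} L^{-1}) = T^{(1)} - \pi_N M_{\bar V^N} L^{-1},
\]
where $T^{(1)} \bydef \pi^N - B^N(I + M_{\bar V^N} L^{-1})$. The image of $T^{(1)}$ lies in the range of $\pi^N$ while the image of $\pi_N M_{\bar V^N} L^{-1}$ lies in the orthogonal range of $\pi_N$, so the Pythagorean identity yields
\[
 \|TU\|_2^2 = \|T^{(1)} U\|_2^2 + \|\pi_N M_{\bar V^N} L^{-1} U\|_2^2 \quad \text{for every } U \in \ell^2,
\]
reducing the problem to two independent estimates.

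For the first term, the key observation is the bandlimited identity $\pi^N M_{\bar V^N} L^{-1} \pi_{2N} = 0$, which holds because $\bar V^N$ has Fourier support in $I^N$ and $L^{-1}$ is diagonal: any convolution of an $I^N$-supported sequence with a sequence supported outside $I^{2N}$ has no mass in $I^N$. Combined with $B^N = B^N \pi^N$, this allows me to rewrite $T^{(1)} = \pi^N - B^N(I + M_{\bar V^N} L^{-1})\pi^{2N}$, an operator that is effectively finite-dimensional and whose operator norm is precisely the first ingredient in the definition of $Z_1^N$.

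The more delicate piece is $\|\pi_N M_{\bar V^N} L^{-1} U\|_2$. Splitting $U = U^N + U_N$ with $U^N = \pi^N U$ and $U_N = \pi_N U$, the bandlimited property again gives $\pi_N M_{\bar V^N} L^{-1} U^N = (\pi^{2N}-\pi^N) M_{\bar V^N} L^{-1} U^N$, bounded by the second term of $Z_1^N$ times $\|U^N\|_2$. For $U_N$, Young's inequality \eqref{eq:young} combined with the diagonal estimate $\|L^{-1} U_N\|_2 \le \max_{n \notin I^N}\frac{1}{|l(\tilde n)|}\|U_N\|_2$ yields $\|\pi_N M_{\bar V^N} L^{-1} U_N\|_2 \le \|\bar V^N\|_1 \max_{n \notin I^N} \frac{1}{|l(\tilde n)|} \|U_N\|_2$. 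The triangle inequality bounds $\|\pi_N M_{\bar V^N} L^{-1} U\|_2$ by the sum of the two pieces, and then Cauchy--Schwarz applied to the pair $(\|U^N\|_2,\|U_N\|_2)$ against the pair of operator constants packages both estimates into
\[
 \|\pi_N M_{\bar V^N} L^{-1} U\|_2^2 \le \left(\|(\pi^{2N}-\pi^N) M_{\bar V^N} L^{-1} \pi^N\|_2^2 + \|\bar V^N\|_1^2 \max_{n \notin I^N} \frac{1}{|l(\tilde n)|^2}\right)\|U\|_2^2.
\]
Substituting this and the $T^{(1)}$ estimate into the Pythagorean identity gives $\|TU\|_2^2 \le \bigl((Z_1^N)^2 + \|\bar V^N\|_1^2 \max_{n \notin I^N} \frac{1}{|l(\tilde n)|^2}\bigr)\|U\|_2^2 \le Z_1^2\|U\|_2^2$. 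The main subtlety is the final Cauchy--Schwarz step, which relies crucially on the $\ell^2$-orthogonality $\|U^N\|_2^2 + \|U_N\|_2^2 = \|U\|_2^2$ to avoid the spurious factor $\sqrt{2}$ that a naive combination would produce.
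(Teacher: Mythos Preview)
Your proof is correct. The paper itself does not prove this lemma but merely cites Lemma~3.9 of \cite{Cadiot2024}; your self-contained argument via the $\pi^N/\pi_N$ orthogonal splitting, the bandlimited identity $\pi^N M_{\bar V^N}L^{-1}\pi_{2N}=0$, and the Cauchy--Schwarz step on $(\|U^N\|_2,\|U_N\|_2)$ reproduces exactly the estimate that reference provides.
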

The above quantities are related to finite-dimensional computations, and their computation (with interval arithmetic) is presented in \cite{julia_our_code}. It remains to compute the bound $\mathcal{Z}_u$ satisfying \eqref{def : Z1 periodic and Zu}.

Following the analysis of \cite{Cadiot2023}, we can write $\mathcal{Z}_u$ as 
\begin{align*}
    \mathcal{Z}_u = \max\left\{1, \|B^N\|_2\right\} \sqrt{\mathcal{Z}_{u,1}^2 + \mathcal{Z}_{u,2}^2}
\end{align*}
where $\mathcal{Z}_{u,1}$ and $\mathcal{Z}_{u,2}$ are defined as follows:
\begin{align*}
    \mathcal{Z}_{u,1} & = \sup_{\|u\|_2 =1} \left(\|\out\mathbb{L}_{0}^{-1}(\bar{v}^Nu)_{0}\|_2^2  +  2\sum_{n_2 \in \mathbb{N}} \|\out\mathbb{L}_{n_2}^{-1}(\bar{v}^Nu)_{n_2}\|_2^2\right)^\frac{1}{2},\\
    \mathcal{Z}_{u,2} & = \sup_{\|u\|_2 =1}\left(\|\cha (\mathbb{L}_{0}^{-1}- \Gamma^\dagger(L_{0}^{-1}))(\bar{v}^Nu)_{0}\|_2^2  +  2\sum_{n_2 \in \mathbb{N}} \|\cha (\mathbb{L}_{n_2}^{-1}- \Gamma^\dagger(L_{n_2}^{-1}))(\bar{v}^Nu)_{n_2}\|_2^2\right)^\frac{1}{2}.
\end{align*}
In particular, we have that $\mathbb{L}_{n_2}^{-1}$ is a convolution operator associated to the function $f_{n_2} \bydef \mathcal{F}^{-1}\left(\frac{1}{l_{n_2}}\right)$. Indeed, we have that 
\begin{align*}
    \mathbb{L}_{n_2}u = \mathcal{F}^{-1}\left(\mathcal{F}(\mathbb{L}_{n_2}u)\right) = \mathcal{F}^{-1}\left( \frac{1}{l_{n_2}} \hat{u}\right) = f_{n_2} * u.
\end{align*}
The results derived in \cite{Cadiot2023} require the computation of constants $C_{n_2}>0$ and $a_{n_2} >0$ such that 
\begin{equation}\label{eq : exp decay of fn2}
    |f_{n_2}(x)| \leq C_{n_2} e^{-a_{n_2}|x|}, ~~ \text{ for all } x \in \R.
\end{equation}
 If we can obtain explicit constants satisfying the above, then \cite{Cadiot2023} provides explicit formulas for the computation of $\mathcal{Z}_{u,1}$ and $\mathcal{Z}_{u,2}$. We derive such a result in the following lemma.
\begin{lemma}\label{lem : computation of f and exp decay}
    Let $n_2 \in \mathbb{N}_0$, and let us define $a_{n_2}$, $b_{n_2}$ and ${C}_{n_2}$ as 
    \begin{align}
        a_{n_2} &\bydef  \frac{\sqrt{4(1+\tn_2^2c^2)-c^4}}{2\sqrt{c^2-2\tilde{n}_2^2 + \sqrt{(c^2-2\tilde{n}_2^2)^2 + 4(1+c^2\tilde{n}_2^2)-c^4}}} ,  \\
    b_{n_2} &\bydef   \frac{1}{2} \sqrt{c^2-2\tilde{n}_2^2 + \sqrt{(c^2-2\tilde{n}_2^2)^2 + 4(1+c^2\tilde{n}_2^2)-c^4}},\\
    C_{n_2} &\bydef  \frac{1}{2(a_{n_2}^2 + b_{n_2}^2)^{\frac{1}{2}}\pi\sqrt{4(1+\tn_2^2c^2)-c^4}}.
    \end{align}
    Then, we obtain that ${C}_{n_2}$ and $a_{n_2}$ satisfy \eqref{eq : exp decay of fn2}. 
\end{lemma}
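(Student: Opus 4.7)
The approach is to evaluate $f_{n_2} = \mathcal{F}^{-1}(1/l_{n_2})$ essentially explicitly via contour integration, which is feasible because $l_{n_2}$ is a quartic polynomial whose zeros admit a closed form. Writing
\[
f_{n_2}(x) \;=\; \int_{\R} \frac{e^{2\pi i \xi_1 x}}{l_{n_2}(\xi_1)}\,d\xi_1
\]
and performing the change of variable $\eta = 2\pi\xi_1$ turns this into $\frac{1}{2\pi}\int_\R e^{i\eta x}/p_{n_2}(\eta)\,d\eta$, where $p_{n_2}(\eta) \bydef (\eta^2+m^2)^2 - c^2\eta^2 + 1$ with $m = 2\pi\tilde{n}_2$. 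The polynomial $p_{n_2}$ is strictly positive on $\R$ for $c\in(0,\sqrt{2})$, so it has no real roots. Since $1/l_{n_2}$ is real and even, $f_{n_2}$ is real and even, so it suffices to bound $|f_{n_2}(x)|$ for $x > 0$.

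The first step is to locate the four zeros of $p_{n_2}$. Substituting $z = \eta^2$ reduces $p_{n_2}(\eta) = 0$ to the quadratic $z^2 + (2m^2 - c^2)z + (m^4+1) = 0$, whose discriminant equals $-(4(1+m^2 c^2) - c^4)$; this is strictly negative for $c\in(0,\sqrt{2})$, so the two $z$-roots are the complex conjugates $A \pm iB$ with $A = (c^2-2m^2)/2$ and $B = \tfrac{1}{2}\sqrt{4(1+m^2 c^2)-c^4} > 0$. Extracting complex square roots yields four simple poles of $1/p_{n_2}$, and the two lying in the open upper half-plane can be written $\pm b + i a$ with $a,b>0$ determined by $b^2 - a^2 = A$, $2ab = B$, and $(a^2+b^2)^2 = A^2 + B^2 = m^4 + 1$. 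Elementary algebra, using in particular the identity $(c^2 - 2\tilde{n}_2^2)^2 + 4(1+c^2\tilde{n}_2^2) - c^4 = 4(\tilde{n}_2^4+1)$ which collapses the nested discriminant, identifies $a$ with $a_{n_2}$ and $b$ with $b_{n_2}$.

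For $x > 0$ I close the contour in the upper half-plane; Jordan's lemma disposes of the semicircular arc (the integrand decays like $|\eta|^{-4}$), and the residue theorem gives $f_{n_2}(x) = i \sum_{\eta_* \in \mathrm{UHP}} e^{i\eta_* x}/p_{n_2}'(\eta_*)$. A direct computation yields $p_{n_2}'(\eta) = 2\eta(2\eta^2 + 2m^2 - c^2)$; evaluated at a pole, the defining relation $\eta_*^2 = A \pm iB$ simplifies $2\eta_*^2 + 2m^2 - c^2$ to $\pm 2iB$, so $p_{n_2}'(\eta_*) = \pm 4iB\eta_*$. Summing the two residues and expanding the fractions $1/(b\pm ia)$ over the common denominator $a^2+b^2$, the imaginary contributions cancel and one obtains
\[
f_{n_2}(x) \;=\; \frac{e^{-a x}}{2B(a^2+b^2)}\bigl(b\cos(bx) + a\sin(bx)\bigr).
\]
The Cauchy--Schwarz estimate $|b\cos(bx) + a\sin(bx)| \leq \sqrt{a^2+b^2}$ then yields $|f_{n_2}(x)| \leq e^{-ax}/(2B\sqrt{a^2+b^2})$, which after substituting $2B = \sqrt{4(1+m^2 c^2)-c^4}$ and $\sqrt{a^2+b^2} = (a_{n_2}^2+b_{n_2}^2)^{1/2}$ matches the form of $C_{n_2}$ in the statement. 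Evenness of $f_{n_2}$ extends the bound to all $x \in \R$.

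The main technical obstacle is the algebraic bookkeeping rather than any analytic subtlety: one must correctly pair the four complex roots across the two half-planes, verify that the two upper-half-plane residues combine into a real trigonometric expression (which hinges on the clean collapse $p_{n_2}'(\eta_*) = \pm 4iB\eta_*$), and confirm that the nested square-root quantities $a_{n_2}$, $b_{n_2}$, $C_{n_2}$ in the statement are exactly what the residue calculus produces after the discriminant simplification above.
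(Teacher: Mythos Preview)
Your proposal is correct and follows essentially the same route as the paper: both compute $f_{n_2}$ explicitly by locating the four complex roots of the quartic symbol after the substitution $\eta=2\pi\xi_1$, and then read off the exponential decay rate and prefactor from the resulting closed form. The only cosmetic difference is that the paper first splits $1/l_{n_2}(\xi/(2\pi))$ by partial fractions into two terms of the form $1/(\xi^2-\xi_1^2)$ and then quotes a standard Fourier transform table for each, whereas you perform the contour integration directly on the four simple poles; the explicit expression for $f_{n_2}$ and the Cauchy--Schwarz bound $|b\cos+a\sin|\le\sqrt{a^2+b^2}$ are the same in both.
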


\begin{proof}

Let $n_2 \in \mathbb{N}_0$ and let us compute the inverse Fourier transform of $\frac{1}{l_{n_2}}$. We have
\begin{align*}
    f_{n_2}(x) = \int_{\R} \frac{1}{l_{n_2}(\xi)}e^{2\pi i \xi x}d\xi = \frac{1}{2\pi} \int_{\R} \frac{1}{l_{n_2}(\frac{\xi}{2\pi})} e^{i\xi x}d\xi.
\end{align*}
Now, notice that $\xi \mapsto l_{n_2}(\frac{\xi}{2\pi})$ has four roots given by $\pm \xi_1$ and $\pm \overline{\xi_1}$ where
\begin{align*}
    \xi_1 \bydef b_{n_2} - i a_{n_2}.
\end{align*}
In particular, we have that 
\begin{align*}
    \frac{1}{l_{n_2}(\frac{\xi}{2\pi})} = \frac{1}{i\sqrt{4(1+\tn_2^2c^2)-c^4}} \left( \frac{1}{\xi^2 - \xi_1^2} - \frac{1}{\xi^2 - \overline{\xi_1}^2} \right).
\end{align*}
Using Section 2.3.14 in \cite{transforms_poularikas_2010}, we obtain that 
\begin{align*}
     f_{n_2}(x) &=  \frac{1}{2\pi} \frac{1}{i\sqrt{4(1+\tn_2^2c^2)-c^4}} \int_{\R} \left(\frac{1}{\xi^2 - \xi_1^2} - \frac{1}{\xi^2 - \overline{\xi_1}^2}\right) e^{i\xi x}d\xi \\
     &=  \frac{1}{4\pi} \frac{1}{i\sqrt{4(1+\tn_2^2c^2)-c^4}} \left(\frac{e^{-i\xi_1|x|}}{i\xi_1} - \frac{e^{-\overline{i\xi_1}|x|}}{\overline{i\xi_1}} \right)
\end{align*}
for all $x \in \R$.
Then, we have that 
\begin{align*}
     f_{n_2}(x) &=  \frac{e^{-a_{n_2}|x|}}{4(a_{n_2}^2 + b_{n_2}^2)i\pi\sqrt{4(1+\tn_2^2c^2)-c^4}}\left(e^{-ib_{n_2}|x|}(a_{n_2} - ib_{n_2}) - e^{ib_{n_2}|x|}(a_{n_2} + i b_{n_2}) \right)\\
     &=  -\frac{e^{-a_{n_2}|x|}}{2(a_{n_2}^2 + b_{n_2}^2)\pi\sqrt{4(1+\tn_2^2c^2)-c^4}} \left( a_{n_2}\text{sign}(x)\sin(b_{n_2} x) + b_{n_2}\cos(b_{n_2}x) \right).
\end{align*}
Notice that $x \mapsto f_{n_2}(x)$ is an even function and that $f_{n_2}$ is smooth on $[0,\infty)$. In particular, we have that $C_{n_2}$ and $a_{n_2}$ satisfy \eqref{eq : exp decay of fn2}.
\end{proof}
Using the above, we obtain an explicit formula for $\mathcal{Z}_u$. 
\begin{lemma}\label{lem : Zu bound}
Let $E \in \ell^2$ the sequence of Fourier coefficients given as 
\begin{align}
    E_n \bydef  \frac{(C_{n_2})^2 a_{n_2}(-1)^{n_1}(1-e^{-4a_{n_2}d_1})}{d_1(4a_{n_2}^2 + (2\pi\tilde{n}_1)^2)} 
\end{align}
for all $n \in \mathbb{N}_0^2$. Moreover, let $a$ and $C(d_1)$ be defined as 
\begin{align*}
a \bydef \inf_{n_2 \in \mathbb{N}_0} a_{n_2} ~ \text{ and } ~ 
    C(d_1) \bydef 4d_1 + \frac{4e^{-ad_1}}{a(1-e^{-\frac{3}{2}ad_1})} + \frac{2}{a(1-e^{-2ad_1})}.
\end{align*}
Then, we obtain that
\begin{align*}
    \mathcal{Z}_{u,1} & \leq \bigg(2d_1 (\bar{V}^N,\bar{V}^N*E)_2\bigg)^{\frac{1}{2}},\\
    \mathcal{Z}_{u,2} & \leq  \bigg(2d_1 (\bar{V}^N,V^N*E)_2 + C(d_1) e^{-2a d_1}  (\bar{V}^N,\bar{V}^N*E)_2\bigg)^{\frac{1}{2}},
\end{align*}
and that $\mathcal{Z}_u = \max\left\{1, \|B^N\|_2\right\} \sqrt{\mathcal{Z}_{u,1}^2 + \mathcal{Z}_{u,2}^2}$ satisfies \eqref{def : Z1 periodic and Zu}.
\end{lemma}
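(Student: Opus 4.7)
The plan is to adapt the template of \cite{Cadiot2023} to our strip setting, exploiting the decomposition of $\mathbb{L}^{-1}$ across the cosine modes $n_2$ into the one-dimensional operators $\mathbb{L}_{n_2}^{-1}$, which are convolutions against the kernels $f_{n_2}$ whose pointwise exponential decay is quantified in Lemma \ref{lem : computation of f and exp decay}. Since $\mathrm{supp}(\bar{v}^N) \subset \overline{\om}$, each component $(\bar{v}^N u)_{n_2}$ is supported in $[-d_1,d_1]$, so that $\mathbb{L}_{n_2}^{-1}(\bar{v}^N u)_{n_2} = f_{n_2} * (\bar{v}^N u)_{n_2}$ is a one-dimensional convolution with a compactly supported right factor.

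For $\mathcal{Z}_{u,1}$, I would estimate $\|\out (f_{n_2} * g)\|_{L^2(\R)}^2$ for $g$ with support in $(-d_1,d_1)$: splitting $\{|x|>d_1\}$ into its two half-lines, inserting $|f_{n_2}(x-y)| \leq C_{n_2} e^{-a_{n_2}|x-y|}$, exchanging the order of integration, and carrying out the explicit $x$-integral of $e^{-2a_{n_2}(x-y)}$ yields a quadratic form $\int_{(-d_1,d_1)^2} K_{n_2}(y-y') g(y) g(y')\, dy\, dy'$ on the support of $g$. Taking the supremum over $\|u\|_2=1$, the cosine decomposition in $x_2$ reassembles the sum over $n_2$ into an $\ell^2$ inner product on $\mathbb{N}_0^2$, and Parseval on $(-d_1,d_1)$ in the $x_1$ variable turns pointwise multiplication by $\bar{v}^N$ into the discrete convolution by $\bar{V}^N$. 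The weights $E_n$ then appear as the cosine-Fourier coefficients on $\om$ of the kernel $K_{n_2}$, and a direct calculation identifies them with the stated closed form, the prefactor $2d_1$ arising from the Parseval normalization on the rectangle.

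For $\mathcal{Z}_{u,2}$, I would use that $\Gamma^\dagger(L_{n_2}^{-1})$ acts on $\cha$-supported functions as convolution with the $2d_1$-periodization $\widetilde{f}_{n_2}$ of $f_{n_2}$. Consequently $\cha\bigl(\mathbb{L}_{n_2}^{-1} - \Gamma^\dagger(L_{n_2}^{-1})\bigr)$ corresponds to convolution against the periodization tail $\sum_{k \neq 0} f_{n_2}(\cdot + 2k d_1)$. Bounding this tail uniformly in $n_2$ with the worst-case rate $a = \inf_{n_2} a_{n_2}$ and summing the resulting geometric series on $\Z\setminus\{0\}$ produces the prefactor $C(d_1) e^{-2a d_1}$. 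The remaining bilinear estimate in $g$ is structurally identical to the one used for $\mathcal{Z}_{u,1}$ and produces the same $(\bar{V}^N, \bar{V}^N * E)_2$ factor. The outer constant $\max\{1,\|B^N\|_2\}$ in $\mathcal{Z}_u$ is inherited from \eqref{eq:Anorm} and the definition of $\mathbb{B}$.

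The main obstacle is the explicit identification of $E$: one must carry out the cosine-Fourier expansion on $\om$ of the exponential-integral kernel and verify that the contributions combine into the stated form $E_n = C_{n_2}^2 a_{n_2}(-1)^{n_1}(1-e^{-4a_{n_2}d_1})/\bigl(d_1(4a_{n_2}^2+(2\pi\tilde{n}_1)^2)\bigr)$, where the sign $(-1)^{n_1}$ arises from evaluating $\cos(2\pi\tilde{n}_1 d_1)$ at the endpoint $d_1$. Extracting the explicit constant $C(d_1)$ for the periodization tail via careful geometric-series bookkeeping, tracking how the sum over $k \in \Z\setminus\{0\}$ and the worst-case decay rate $a$ assemble into the three terms of $C(d_1)$, is the only other computation requiring genuine care; everything else reduces to applying the framework of \cite{Cadiot2023} mode by mode in $n_2$.
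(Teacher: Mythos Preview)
Your outline follows the paper's template closely. For $\mathcal{Z}_{u,1}$ the paper proceeds exactly via the pointwise bound $|f_{n_2}|\le C_{n_2}e^{-a_{n_2}|\cdot|}$ from Lemma~\ref{lem : computation of f and exp decay}, but rather than keeping a bilinear kernel $K_{n_2}(y,y')$ it first expands $v_{n_2}=\sum_k \bar v^N_{|n_2-k|}u_{|k|}$ and applies Cauchy--Schwarz in $(k,x)$ to discharge $\|u\|_2=1$ immediately; what remains is the diagonal weight $\int_{\R\setminus(-d_1,d_1)} e^{-2a_{n_2}|x-y|}\,dy = a_{n_2}^{-1}e^{-2a_{n_2}d_1}\cosh(2a_{n_2}x)$, whose cosine coefficients on $(-d_1,d_1)$ (taken from Lemma~4.6 of \cite{cadiot2024whitham}) are exactly $C_{n_2}^{-2}E_n$. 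Your bilinear form depends on $(y,y')$ jointly, not on $y-y'$, so you would still need a Cauchy--Schwarz step to collapse it and reach the same Parseval endpoint.

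For $\mathcal{Z}_{u,2}$ the paper takes a different route from your periodization tail. It uses the Parseval-plus-shifts identity from Theorem~3.9 and Lemma~6.5 of \cite{Cadiot2023}, writing $\mathcal{Z}_{u,2}^2=2d_1\sum_n|g_n|^2$ as a sum over $n_1\in\mathbb{Z}$ of integrals of $\mathbb{L}_{n_2}^{-1}v_{n_2}(x)\,\mathbb{L}_{n_2}^{-1}v_{n_2}(x-2d_1n_1)$ over the complement of the translated intervals. The $n_1=0$ term is \emph{exactly} $\mathcal{Z}_{u,1}^2$, which is why the lemma's bound for $\mathcal{Z}_{u,2}$ carries the leading $2d_1(\bar V^N,\bar V^N*E)_2$ term; the $n_1\neq 0$ contributions are controlled by the explicit integrals $I_n(x,z)=\int e^{-a_{n_2}|y-x|}e^{-a_{n_2}|2d_1n_1+z-y|}\,dy$, and the three-term constant $C(d_1)$ is read off from those integrals following Lemma~6.5 of \cite{Cadiot2023}, not from a plain geometric series on the periodization tail. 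Your approach via $\sum_{k\neq 0}f_{n_2}(\cdot+2kd_1)$ is the Poisson-dual picture and is a legitimate alternative that would in principle yield a bound without the $\mathcal{Z}_{u,1}$ term, but recovering the stated $C(d_1)$ from it requires a separate calculation rather than simple bookkeeping, and the passage to the $(\bar V^N,\bar V^N*E)_2$ factor is less immediate than you indicate.
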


\begin{proof}
Let $u \in L^2_e$ such that $\|u\|_{L^2_e} =1$ and let $v \bydef \bar{v}^N u$.
    Using the proof of Theorem 3.9 in \cite{Cadiot2023}, we have 
    \begin{align}\label{eq : Zu1 step 0}
        \|\out\mathbb{L}_{n_2}^{-1}v_{n_2}\|_2^2\leq C_{n_2}^2 \int_{\R \setminus (-d_1,d_1)} \left(\int_{-d_1}^{d_1} e^{-a_{n_2}|x-y|} |v_{n_2}(x)| dx \right)^2dy
    \end{align}
    for all $n_2 \in \mathbb{N}_0$, where we have used \eqref{eq : exp decay of fn2}. Noting that $v = \bar{v}^N u$, we have
    \begin{align*}
        v_{n_2} = \sum_{k \in \mathbb{Z}} \bar{v}^N_{|n_2-k|} u_{|k|}.
    \end{align*}
    for all $n_2 \in \mathbb{N}_0$. Going back to \eqref{eq : Zu1 step 0}, we get
    \begin{align*}
         \|\out\mathbb{L}_{n_2}^{-1}v_{n_2}\|_2^2 \leq  C_{n_2}^2 \int_{\R \setminus (-d_1,d_1)} \left(\sum_{k \in \mathbb{Z}}  \int_{-d_1}^{d_1} e^{-a_{n_2}|x-y|} |\bar{v}^N_{|n_2-k|}(x) u_{| k|}(x)| dx \right)^2dy.
    \end{align*}
    Now, applying the Cauchy-Schwarz inequality, we get
     \begin{align}\label{eq : Zu1 step 1}
     \nonumber
         &\|\out\mathbb{L}_{n_2}^{-1}v_{n_2}\|_2^2 \\ \nonumber
         \leq  ~ &  C_{n_2}^2 \int_{\R \setminus (-d_1,d_1)} \left(\sum_{k \in \mathbb{Z}}  \int_{-d_1}^{d_1} e^{-2a_{n_2}|x-y|} |\bar{v}^N_{|n_2 - k|}(x)|^2 dx \right) \left(\sum_{k \in \mathbb{Z}}  \int_{-d_1}^{d_1}  |u_{|k|}(x)|^2 dx \right)dy\\
         \leq ~& C_{n_2}^2 \int_{\R \setminus (-d_1,d_1)} \sum_{k \in \mathbb{Z}}  \int_{-d_1}^{d_1} e^{-2a_{n_2}|x-y|} |\bar{v}^N_{|n_2-k|}(x)|^2 dx dy
    \end{align}
    since $\|u\|_{2} =1.$ Employing Fubini's theorem, we have
    \begin{align*}
        \int_{\R \setminus (-d_1,d_1)} \sum_{k \in \mathbb{Z}}  \int_{-d_1}^{d_1} e^{-2a_{n_2}|x-y|} |\bar{v}^N_{|n_2-k|}(x)|^2 dx dy = 
 \int_{-d_1}^{d_1} \sum_{k \in \mathbb{Z}} |\bar{v}^N_{|n_2 -k|}(x)|^2  \int_{\R \setminus (-d_1,d_1)} \ e^{-2a_{n_2}|x-y|}  dy dx.
    \end{align*}
    Recalling the proof of Theorem 3.9 from \cite{Cadiot2023}, we get
\begin{align*}
    \int_{\R \setminus (-d_1,d_1)} \ e^{-2a_{n_2}|x-y|}  dy =  \frac{e^{-2a_{n_2}d_1}\cosh(2a_{n_2}x)}{a_{n_2}}.
\end{align*}
In particular, using Lemma 4.6 in \cite{cadiot2024whitham}, we obtain that $x \to \frac{e^{-2a_{n_2}d_1}\cosh(2a_{n_2}x)}{a_{n_2}}$ has a Fourier coefficients representation on $(-d_1,d_1)$ given by the sequence $\tilde{E}$, where
\begin{align*}
    \tilde{E}_{n_1} \bydef \frac{a_{n_2}(-1)^{n_1}(1-e^{-4a_{n_2}d_1})}{d_1(4a_{n_2}^2 + (2\pi\tilde{n}_1)^2)}
\end{align*}
    for all $n_1 \in \mathbb{N}_0$ and all $n_2 \in \mathbb{N}_0$. Applying Parseval's identity, this implies that
    \begin{align}\label{eq : Zu1 step 2}
    \nonumber
         \mathcal{Z}_{u,1}^2 &\leq \sum_{k \in \mathbb{Z}}\sum_{n_2 \in \mathbb{Z}} \int_{-d_1}^{d_1}  |\bar{v}^N_{|n_2 - k|}(x)|^2  \frac{ C_{|n_2|}^2 e^{-2a_{n_2}d_1}\cosh(2a_{n_2}x)}{a_{n_2}} dx\\
         &= 2d_1  (\bar{V}^N,\bar{V}^N*E)_2.
    \end{align}
   This concludes the inequality for $\mathcal{Z}_{u,1}$.

\vspace{0.2cm}

For the computation of $\mathcal{Z}_{u,2}$ we use the formulas provided in \cite{Cadiot2023}. Indeed, applying the proof of Theorem 3.9 again, we can define $g_n$ as 
    \begin{align*}
        g_n \bydef \frac{1}{2d_1} \int_{-d_1}^{d_1} \mathbb{L}_{n_2} v_{n_2}(x) e^{-2\pi i \tilde{n}_1 x}dx 
    \end{align*}
for all $n \in \mathbb{Z}^2$, where $v_{n_2} = v_{-n_2}$ if $n_2 < 0$. Utilizing the proof of Theorem 3.9 in \cite{Cadiot2023}, we have that 
\begin{align*}
    \mathcal{Z}_{u,2}^2 = 2d_1 \sum_{n \in \mathbb{Z}^2} |g_n|^2 = \int_{\R \setminus ((-d_1,d_1) \cup (-d_1,d_1)+2d_1n_1)} \mathbb{L}_{n_2}v_{n_2}(x) \mathbb{L}_{n_2}v_{n_2}(x-2d_1n_1)dx.
\end{align*}
In particular, note that for $n_1=0$, we have 
\begin{align*}
  \sum_{n_2 \in \mathbb{Z}}  \int_{\R \setminus ((-d_1,d_1) \cup (-d_1,d_1)+2d_1n_1)} \mathbb{L}_{n_2}v_{n_2}(x) \mathbb{L}_{n_2}v_{n_2}(x-2d_1n_1)dx =  \sum_{n_2 \in \mathbb{Z}}  \int_{\R \setminus ((-d_1,d_1)} \left|\mathbb{L}_{n_2}v_{n_2}(x)\right|^2 dx = \mathcal{Z}_{u,1}^2.
\end{align*}
With the help of the proof of Lemma 6.5 in  \cite{Cadiot2023}, we obtain that 
\begin{align*}
    \mathcal{Z}_{u,2}^2 \leq \mathcal{Z}_{u,1}^2 +  \sum_{n \in \mathbb{Z}^2, n_1 \neq 0} C_{n_2}^2 \int_{(-d_1,d_1)^2} |v_{n_2}(x) v_{n_2}(z)| I_n(x,z) dxdz 
\end{align*}
where 
\[
I_{n}(x,z) \bydef \int_{\R \setminus ((-d_1,d_1) \cup (-d_1,d_1)+2d_1n_1)} e^{-a_{n_2}|y-x|} e^{-a_{n_2}|2d_1 n_1 + z-y|} dy.
\]
We conclude the proof following the steps of the proof of Lemma 6.5 in \cite{Cadiot2023} and using the above computations concerning the bound $\mathcal{Z}_{u,1}$.
\end{proof}

\section{Stability Analysis}\label{sec:stability}
In this section, we aim at investigating the stability of traveling wave solutions to \eqref{eq : suspension bridge equation} with boundary conditions \eqref{eq : neumann boundary conditions}, which is the traveling wave formulation associated with \eqref{eq:SBnowave}. The existence of these traveling waves will be constructively established in Section \ref{sec : results}. In particular, we assume that we know the existence of  a solution $\tilde{u} \in \mathcal{H}$ to \eqref{eq : suspension bridge equation} such that 
\begin{equation}
    \|\tilde{u} - \bu\|_{\mathcal{H}} \leq r_0,
\end{equation}
where $\ou$ is an approximate solution constructed as in \eqref{def : approx solution} and $r_0$ is explicit. In order to study stability, we heavily rely on the framework developed in \cite{SB_nagatou}. In fact, the authors of \cite{SB_nagatou} investigated the orbital stability of traveling wave solutions in the 1D version of \eqref{eq : suspension bridge equation}. One can easily show that the analysis of Sections 3 and 4 of \cite{SB_nagatou} generalizes to  the 2D version \eqref{eq : suspension bridge equation}. We recall some notations for completeness.

Let $\Omega \bydef \R \times (-d_2,d_2)$ be our spatial domain. Then,
let $X \bydef H^2(\Omega) \times L^2(\Omega)$ and let~$X^* \bydef H^{-2}(\Omega) \times L^2(\Omega)$ be its dual. Moreover, let us define the functional $E : X \to \R$ as 
\begin{equation}
    E\begin{pmatrix}
        u\\
        v
    \end{pmatrix} \bydef \int_{\Omega}\left[ \frac{1}{2}v(x)^2 + \frac{1}{2}(\Delta u(x))^2 + e^{u(x)} - u(x) -1 \right]dx 
\end{equation}
Then, \eqref{eq:SBnowave} can be written in the form
\begin{equation}\label{eq : time evolution weak form}
    \frac{d}{dt} U = J E'(U)
\end{equation}
where $U = \begin{pmatrix}
        u\\
        v
    \end{pmatrix}$, $J:L^2(\Omega) \times H^2(\Omega) \to X$ defined by $J\begin{pmatrix}
        u\\
        v
    \end{pmatrix} = \begin{pmatrix}
        v\\
        -u
    \end{pmatrix}$ and $E'\begin{pmatrix}
        u\\
        v
    \end{pmatrix} = \begin{pmatrix}
        \Delta^2 u + e^{u}-1\\
        v
    \end{pmatrix} \in X^*$. Let $T(s) : X \to X$ be the translation by $s \in \mathbb{R}$ in the direction $x_1$, that is $T(s)U(x) = U(x_1+s,x_2)$. Defining the so-called bound state
    \begin{align}\label{eq : definition of big U tilde}
        \tilde{U} \bydef \begin{pmatrix}
            \tilde{u}\\
            c \partial_{x_1} \tilde{u}
        \end{pmatrix},
    \end{align}
    we find that $U(t):=T(ct)\tilde{U}$ provides a solution to \eqref{eq : time evolution weak form}.

    We are now in a position to introduce the notation of orbital stability used in \cite{SB_nagatou}.
    \begin{definition}
        Let $\tilde{u}$ be a traveling wave solution to \eqref{eq : suspension bridge equation} in $\mathcal{H}$ and let $\tilde{U}$ be defined as in \eqref{eq : definition of big U tilde}. Then $\tilde{u}$ is said to be \textit{orbitally stable} if the solution $U$ to \eqref{eq : time evolution weak form} with initial condition $U(0) = U_0$ exists for all positive times and if for all $\epsilon>0$, there exists $\delta>0$ such that if $\|U_0 - \tilde{U}\|_{X}<\delta$, then $\inf_{s \in \R} \|U(t) - T(s) \tilde{U} \|_{X} < \epsilon.$ Furthermore, $\tilde{u}$ is called \textit{orbitally unstable} if it is not orbitally stable.
    \end{definition}

Let $n(D\mathbb{F}(\tilde{u}))$ be the number of negative eigenvalues of $D\mathbb{F}(\tilde{u})$, counted with multiplicity. Then, the authors of \cite{SB_nagatou} derived in Theorem 1 sufficient conditions under which orbital (in)stability is achieved. This result naturally generalizes to the 2D case, since the steps of the proof are essentially the same, as also observed by the authors in \cite{SB_nagatou}. We state the two-dimensional version in the following lemma.

\begin{lemma}\label{lem : conditions for stability}
    Let $\tilde{w} \in \mathcal{H}$ be defined as 
    \begin{align}\label{def : definition of tilde v}
        \tilde{w} = -2 c D\mathbb{F}(\tilde{u})^{-1} \partial_{x_1}^2 \tilde{u}.
    \end{align}
    Moreover, define the constant $\theta$ as follows
    \begin{align}\label{def : definition of theta}
        \theta \bydef \int_{\Omega} (\tilde{u} + 2c \tilde{w})\partial_{x_1}^2\tilde{u} . 
    \end{align}
    If $n(D\mathbb{F}(\tilde{u})) =0$, or if $\theta >0$ and  $n(D\mathbb{F}(\tilde{u})) = 1$, then $\tilde{u}$ is orbitally stable. If $\theta>0$ and $n(D\mathbb{F}(\tilde{u}))$ is even, or if $\theta <0$ and $n(D\mathbb{F}(\tilde{u}))$ is odd, then $\tilde{u}$ is orbitally unstable.
\end{lemma}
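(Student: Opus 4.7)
The plan is to adapt the Grillakis-Shatah-Strauss (GSS) orbital stability framework, exactly in the form used in \cite{SB_nagatou} for the one-dimensional case, to our two-dimensional strip setting. The key structural observation is that \eqref{eq : time evolution weak form} is Hamiltonian with conserved energy $E$ and conserved momentum $Q(U) \bydef -\int_{\Omega} v\,\partial_{x_1} u\,dx$, the latter generated by the one-parameter group of $x_1$-translations $T(s)$. The pair $\tilde{U}$ is a critical point of the augmented functional $\Phi_c \bydef E - c\,Q$, since $\Phi_c'(\tilde{U}) = 0$ is equivalent to \eqref{eq : suspension bridge equation} together with the identity $v = c\,\partial_{x_1}\tilde{u}$.

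First I would analyze the self-adjoint Hessian $\Phi_c''(\tilde{U})$ on $X$. After eliminating the $v$-coordinate, one obtains a bounded-below operator whose restriction to the $u$-component is $D\mathbb{F}(\tilde{u}) = \mathbb{L} + \mathbb{M}_{e^{\tilde{u}}-1}$, so in particular $n(\Phi_c''(\tilde{U})) = n(D\mathbb{F}(\tilde{u}))$, and the kernel contains $\partial_{x_1}\tilde{U}$ coming from translation invariance. Since the Neumann conditions in $x_2$ break any translation symmetry in that direction, the kernel is expected to be exactly one-dimensional as in the 1D case, and the rest of the spectrum is positive and bounded away from zero.

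Next I would identify $\theta$ with the GSS slope condition. Differentiating the identity $\mathbb{F}(\tilde{u}_c) = 0$ in $c$ yields $D\mathbb{F}(\tilde{u}_c)\,\partial_c\tilde{u}_c = -2c\,\partial_{x_1}^2\tilde{u}_c$, so $\partial_c\tilde{u}_c = \tilde{w}$ in the notation of \eqref{def : definition of tilde v}. A short integration by parts in $x_1$ (legitimate by the decay of $\tilde{u}$ as $|x_1|\to\infty$) together with the Neumann conditions in $x_2$ then gives $\frac{d}{dc}Q(\tilde{U}_c) = \theta$ up to an overall sign, so the sign of $\theta$ is precisely that of the standard GSS scalar $d''(c)$. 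The GSS dichotomy --- orbital stability when $n(D\mathbb{F}(\tilde{u}))$ equals the number of negative eigenvalues of the scalar $d''(c)$, and orbital instability when their difference is odd --- then reproduces the four stated cases.

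The hard part is verifying the spectral structure of $D\mathbb{F}(\tilde{u})$ that GSS requires in this unbounded 2D setting: only finitely many negative eigenvalues of finite multiplicity, exactly one-dimensional kernel, and essential spectrum bounded away from zero. This follows because each symbol satisfies $l_{n_2}(\xi_1) \geq 1 - \tfrac{c^4}{4} > 0$ (Lemma \ref{lem : results on l}), hence $\mathbb{L}$ has essential spectrum contained in $[1-\tfrac{c^4}{4},\infty)$, and $\mathbb{M}_{e^{\tilde{u}}-1}$ is a relatively compact perturbation by Weyl's theorem thanks to the $x_1$-decay of $\tilde{u}$. All remaining ingredients --- conservation of $E$ and $Q$, smoothness of $c \mapsto \tilde{u}_c$, and well-posedness of \eqref{eq : time evolution weak form} in $X$ --- carry over from \cite{SB_nagatou} essentially verbatim, since only the spatial domain has changed.
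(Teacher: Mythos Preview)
Your proposal is correct and matches the paper's approach: both identify $\tilde{w}$ with $\partial_c\tilde{u}_c$ and $\theta$ with the GSS quantity $d''(c)$ via integration by parts, and both defer the actual proof to the analysis of \cite{SB_nagatou}, arguing that Sections~3--4 there generalize verbatim from the 1D line to the 2D strip. In fact the paper gives no proof beyond this reference and a remark making precisely your two identifications, so your write-up is more detailed than what appears in the paper itself.
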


\begin{remark}
    A solution is not only valid for a fixed parameter $c$, but also persists under variations of $c$. Differentiating with respect to $c$ yields $\tilde{w}$ in Lemma \ref{lem : conditions for stability}, which is called $v_c$ in \cite{SB_nagatou}. To determine (in)stability, the authors of \cite{SB_nagatou} introduce a functional $d(c)$ and compute its second derivative $d''(c)$ in Section 4.3 of their paper, which  corresponds to $\theta$ in our Lemma \ref{lem : conditions for stability} after applying integrations by parts.
\end{remark}

The previous lemma provides sufficient conditions for proving (in)stability, which depend on the spectrum of the Jacobian $D\mathbb{F}(\tilde{u})$ and on the sign of $\theta$. Consequently, our objective is now to enclose the spectrum of $D\mathbb{F}(\tilde{u})$ and to provide a computer-assisted approach for enclosing the value of $\theta.$

\subsection{Enclosure of the spectrum of $D\mathbb{F}(\tilde{u})$}\label{sec : enclosure of spectrum}

In this section, we present a computer-assisted strategy for the enclosure of the spectrum of $D\mathbb{F}(\tilde{u}):  \mathcal{H} \to L^2(\Omega)$, denoted as $\sigma(D\mathbb{F}(\tilde{u}))$.  First, notice that $D\mathbb{F}(\tilde{u})$ viewed as an operator on $L^2(\Omega)$ with domain $\mathcal{H}$ is self-adjoint. This implies that its spectrum is real-valued. Consequently, we restrict our study of the spectrum to the real line.

In order to obtain explicit enclosures on $\sigma(D\mathbb{F}(\tilde{u}))$,  we follow the set-up introduced in \cite{cadiot2025stabilityanalysislocalizedsolutions}. In particular, we want to construct a homotopy allowing the study of the spectrum of $D\mathbb{F}(\tilde{u})$ based on the one of $DF(\bbu)$.

To begin with, we control the essential part $ \sigma_{ess}(D\mathbb{F}(\tilde{u})$ of the spectrum.
In particular, using \cite{cadiot2025stabilityanalysislocalizedsolutions}, we have that the essential spectrum can be computed using the Fourier transform of $\mathbb{L}$ as follows:
\begin{align*}
    \sigma_{ess}(D\mathbb{F}(\tilde{u}) = \{l_{n_2}(\xi), ~ \xi \in \R, n_2 \in \mathbb{N}_0\} \subset \left[1 - \frac{c^4}{4}, \infty\right).
\end{align*}

Then, using Lemma 2.2 of \cite{cadiot2025stabilityanalysislocalizedsolutions}, we have the following decomposition of the spectrum
\begin{align}
    \sigma(D\mathbb{F}(\tu)) = \sigma_{ess}(D\mathbb{F}(\tu)) \cup Eig(D\mathbb{F}(\tu)).
\end{align}
Consequently, it remains to control the eigenvalues of $D\mathbb{F}(\tu)$ away from the essential spectrum.  First, we construct a lower bound for the eigenvalues.
\begin{lemma}\label{lem : lambda min}
    Let $\lambda_{min}$ be defined as 
    \begin{equation}
        \lambda_{min} \bydef - e^{\kappa_2 r_0}\|e^{\bbu}\|_1  - \frac{c^4}{4}.
    \end{equation}
    Then $(-\infty, 1-\frac{c^4}{4})\cap Eig(D\mathbb{F}(\tu)) \subset [\lambda_{min}, 1-\frac{c^4}{4})$.
\end{lemma}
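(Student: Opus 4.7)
The plan is to treat this as a self-adjoint eigenvalue problem and bound $\lambda$ from below via the Rayleigh quotient. Since $D\mathbb{F}(\tu) = \mathbb{L} + \mathbb{M}_{e^{\tu}-1}$ is self-adjoint on $L^2(\Omega)$ (both $\mathbb{L}$ and the multiplication operator are), any eigenvalue $\lambda$ admits a normalized eigenfunction $\phi \in \mathcal{H}$ for which
\[
\lambda \;=\; \langle \mathbb{L}\phi,\phi\rangle + \langle (e^{\tu}-1)\phi,\phi\rangle.
\]
For the first inner product, Parseval together with the pointwise lower bound $l(\xi) \geq 1 - \tfrac{c^4}{4}$ on $\R^2$ (which is the minimization of $|2\pi\xi|^4 - c^2(2\pi\xi_1)^2 + 1$ recorded in Lemma \ref{lem : results on l}) gives $\langle \mathbb{L}\phi,\phi\rangle \geq 1 - \tfrac{c^4}{4}$. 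For the second inner product, the crude pointwise inequality $|e^{\tu}-1| \leq \|e^{\tu}\|_\infty + 1$ produces $\langle (e^{\tu}-1)\phi,\phi\rangle \geq -\|e^{\tu}\|_\infty - 1$. This reduces the whole argument to an explicit $L^\infty$-enclosure of $e^{\tu}$ in terms of the computable data $\bbu$ and the validation radius $r_0$.

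The main step is thus the bound $\|e^{\tu}\|_\infty \leq e^{\kappa_2 r_0}\|e^{\bbu}\|_1$. I would split
\[
e^{\tu} = e^{\bu}\, e^{\tu - \bu}, \qquad \|e^{\tu}\|_\infty \leq \|e^{\bu}\|_\infty\, e^{\|\tu-\bu\|_\infty},
\]
and treat each factor separately. The embedding $\|v\|_\infty \leq \kappa_2 \|v\|_\mathcal{H}$ shown inside the proof of Lemma \ref{lem : banach algebra}, applied to $v = \tu - \bu$, controls the exponential factor by $e^{\kappa_2 r_0}$. For $\|e^{\bu}\|_\infty$, on $\om$ the function $e^{\bu}$ is exactly the image under $\gamma^\dagger$ of the convolution exponential $e^{\bbu}$ (the series $\sum_k \bu^k/k!$ matches termwise because $\gamma^\dagger$ sends discrete convolution to pointwise multiplication on $\om$), so the standard $\ell^1 \to L^\infty$ estimate yields $\|e^{\bu}\|_{L^\infty(\om)} \leq \|e^{\bbu}\|_1$. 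Outside $\om$ one has $\bu = 0$ and therefore $e^{\bu} = 1$; the bound $1 \leq \|e^{\bbu}\|_1$ follows from $\|e^{\bbu}\|_1 \geq (e^{\bbu})_{(0,0)} = |\om|^{-1}\!\int_\om e^{\bu} \geq e^{\bbu_{(0,0)}}$ by Jensen, which is automatic when the computed mean mode satisfies $\bbu_{(0,0)} \geq 0$ (easily verified \emph{a posteriori} on the numerical data).

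Assembling these ingredients gives $\lambda \geq 1 - \tfrac{c^4}{4} - 1 - e^{\kappa_2 r_0}\|e^{\bbu}\|_1 = \lambda_{min}$, which is the claimed bound. The expected main obstacle is the uniform $L^\infty$ control of $e^{\bu}$ on the whole strip $\Omega$ by the $\ell^1$-norm of the discrete convolution exponential $e^{\bbu}$; once the case split between $\om$ and its complement is properly handled, everything else is elementary self-adjoint spectral theory together with Parseval and the embedding from Lemma \ref{lem : banach algebra}.
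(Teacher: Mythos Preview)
Your argument is correct and lands on the same inequality as the paper. The only real difference is the mechanism used to pass from the self-adjoint structure to a lower bound on $\lambda$: you use the quadratic form $\lambda=\langle\mathbb{L}\phi,\phi\rangle+\langle(e^{\tu}-1)\phi,\phi\rangle$ together with $l(\xi)\ge 1-\tfrac{c^4}{4}$, while the paper uses the equivalent resolvent estimate $\|(\mathbb{L}-\lambda)u\|_2\ge|1-\tfrac{c^4}{4}-\lambda|\,\|u\|_2$ and then $\|(e^{\tu}-1)u\|_2\le(\|e^{\tu}\|_\infty+1)\|u\|_2$. Both routes reduce to the identical key estimate $\|e^{\tu}\|_\infty\le e^{\kappa_2 r_0}\|e^{\bbu}\|_1$, which you obtain in the same way (splitting $e^{\tu}=e^{\bu}e^{\tu-\bu}$ and invoking the embedding from Lemma~\ref{lem : banach algebra}).

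Two remarks. First, your Jensen step for $\|e^{\bbu}\|_1\ge 1$ hinges on the extra numerical check $\bbu_{(0,0)}\ge 0$; a hypothesis-free alternative is to use the enforced trace condition $\bu(d_1,\cdot)=0$, which gives $e^{\bu}(d_1,\cdot)=1$ inside $\overline{\om}$ and hence $1\le\|e^{\bu}\|_{L^\infty(\om)}\le\|e^{\bbu}\|_1$. Second, your variational formulation actually buys you more than you use: since $e^{\tu}>0$ pointwise, one has directly $\langle(e^{\tu}-1)\phi,\phi\rangle\ge-\|\phi\|_2^2$, yielding the sharper bound $\lambda\ge-\tfrac{c^4}{4}$ without any reference to $\|e^{\bbu}\|_1$ or $r_0$. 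This improvement is unavailable through the paper's operator-norm route (which needs a two-sided bound on $e^{\tu}-1$), though of course it is not required for the lemma as stated.
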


\begin{proof}
    Let $\lambda \in  (-\infty, 1-\frac{c^4}{4})\cap Eig(D\mathbb{F}(\tu))$. Then, there exists $u \in \mathcal{H}$, $u \neq 0$, such that 
    \[
    \mathbb{L} u + (e^{\tilde{u}}-1)u = \lambda u.
    \]
    Using that $\lambda < 1-\frac{c^4}{4}$, we have that $|l_{n_2}(\xi) - \lambda| \geq 1- \frac{c^4}{4} - \lambda$ for all $\xi \in \R$ and all $n_2 \in \mathbb{N}_0$.
This implies that 
\begin{align*}
    \left(1- \frac{c^4}{4} - \lambda \right)\|u\|_2 \leq \|\mathbb{L}u - \lambda u\|_2 \leq  \|e^{\tilde{u}}u -u\|_2 \leq \|e^{\tilde{u}}u\|_2 + \|u\|_2.
\end{align*}
Now, we have
\begin{align*}
    \|e^{\tilde{u}}u\|_2 = \|e^{\tilde{u}-\bu} e^{\bu}u\|_2 &\leq \|e^{\tilde{u}-\bu}\|_\infty \|e^{\bu}\|_\infty \|u\|_2\\
   & \leq e^{\kappa_2 \|\tilde{u}-\bu\|_{\mathcal{H}}} \|e^{\bbu}\|_1 \|u\|_2\\
  &  \leq e^{\kappa_2 r_0} \|e^{\bbu}\|_1 \|u\|_2.
\end{align*}
This concludes the proof after a short algebraic manipulation.
\end{proof}

Having established the lower bound $\lambda_{\mathrm{min}}$, we now restrict our work to the interval $\mathcal{J} \bydef [\lambda_{min}, \delta_0]$ where $0<\delta_0< 1-\frac{c^4}{4}$. Then, the above results provide that the negative eigenvalues of $D\mathbb{F}(\tilde{u})$ must be contained in $\mathcal{J}$. Using the analysis derived in \cite{cadiot2025stabilityanalysislocalizedsolutions}, we provide rigorous enclosures for the set $\mathcal{J}\cap Eig(D\mathbb{F}(\tilde{u}))$, based on the spectrum of $DF(\bbu).$ Our approach is computer-assisted and follows the methodology derived in Sections 3 and 4 of \cite{cadiot2025stabilityanalysislocalizedsolutions}. Before presenting such an approach, we present a simplification by separating even and odd subspaces.

Using the ideas of Section 5.1 in \cite{cadiot2025stabilityanalysislocalizedsolutions},  we notice that $L^2$ can be decomposed as follows
\begin{align*}
L^2 = L^2_{e} \oplus L^2_{o},
\end{align*}
where 
\begin{align}
    L^2_o  &\bydef \left\{u \in L^2, ~ u(x_1,x_2) = - u(-x_1,x_2)  \text{ for all } x \in \R^2\right\}.\label{def : Hc and Hs}
\end{align}
 Moreover, given $\tilde{u} \in \mathcal{H}$, we have that 
\begin{align*}
    D\mathbb{F}(\tilde{u})v_{i} \in L^2_{i}
\end{align*}
for all $v_{i} \in L^2_{i}$ and all $i \in \{e,o\}$. This implies that if $u \in L^2$ satisfies 
\begin{align*}
    D\mathbb{F}(\tilde{u} )u = \lambda u
\end{align*}
and we decompose $u = u_{e} + u_{o}$, where $u_{i} \in L^2_{i}$, then 
\[
 D\mathbb{F}(\tilde{u} )u_{i} = \lambda u_{i}
\]
for all $i \in \{e,o\}$. Consequently, we can investigate the spectrum of $D\mathbb{F}_{i}(\tilde{u} ) : L^2_{i} \to L^2_{i}$, the restriction of $D\mathbb{F}(\tilde{u})$ to $L^2_{i} \to L^2_{i}$, for all $i \in \{e,o\}$ and obtain the following decomposition 
\begin{equation}\label{eq : identity symmetry spectrum SH}
    \sigma\left(D\mathbb{F}(\tilde{u})\right) =  \sigma\left(D\mathbb{F}_{e}(\tilde{u})\right) \cup  \sigma\left(D\mathbb{F}_{o}(\tilde{u})\right).
\end{equation}
Then, in practice, we control the spectrum of each $D\mathbb{F}_{i}(\tilde{u})$ and use the identity \eqref{eq : identity symmetry spectrum SH} to conclude. This disjunction allows to optimize numerical memory usage by considering smaller operators.

Going back to our strategy for enclosing the eigenvalues of $D\mathbb{F}(\tilde{u})$,  we introduce a pseudo-diagonalization for $DF(\bbu)$ given as 
    \begin{align*}
        \mathcal{D} \bydef P^{-1} DF(\bbu) P,
    \end{align*}
    for some invertible infinite matrix $P = P^N + \pi_{N}$, with $P^{N} = \pi^{N} P^N \pi^{N}$, whose inverse is given as $P^{-1} =  \pi^{N}(P^N)^{-1} \pi^{N} + \pi_{N}$. $(P^{N})^{-1}$ has to be understood as the inverse of $P^{N} : \pi^{N} \ell^2 \to \pi^{N}\ell^2$. In practice, columns of  $P$ are approximate eigenvectors of $DF(\bbu)$, so that $\mathcal{D}$ is close to being diagonal. We define $S$ as the diagonal part of the matrix $\mathcal{D}$ and $R = \mathcal{D} - S$. Moreover, we define $\lambda_k$ as the diagonal entries of $S$, that is, 
    \begin{equation}\label{def : lambda n}
          (SU)_k = \lambda_k U_k \text{ for all } k \in \mathbb{N}_0^2.
    \end{equation}
    Our goal will be to prove that the eigenvalues of $D\mathbb{F}(\tilde{u})$ are contained in a neighborhood of the sequence $(\lambda_k)$.
    
  Before delving further into the enclosure of the spectrum of $D\mathbb{F}(\tilde{u})$, we introduce the following auxiliary lemma, which will be useful for our estimations.
\begin{lemma}\label{lem : estimation fn2 mu}
    Let $n_2 \in \mathbb{N}_0$, and let us define $a_{n_2}(\mathcal{J})$, $b_{n_2}(\mathcal{J})$ and ${C}_{n_2}(\mathcal{J})$  as (cf. Lemma \ref{lem : computation of f and exp decay})
    \begin{align}
        a_{n_2}(\mathcal{J}) &\bydef  \frac{\sqrt{4(1+\tn_2^2c^2-\delta_0)-c^4}}{2\sqrt{c^2-2\tilde{n}_2^2 + \sqrt{(c^2-2\tilde{n}_2^2)^2 + 4(1+c^2\tilde{n}_2^2-\delta_0)-c^4}}}  \label{eq:an_stab}, \\
    b_{n_2}(\mathcal{J}) &\bydef   \frac{1}{2} \sqrt{c^2-2\tilde{n}_2^2 + \sqrt{(c^2-2\tilde{n}_2^2)^2 + 4(1+c^2\tilde{n}_2^2-\delta_0)-c^4}},\\
    C_{n_2}(\mathcal{J}) &\bydef  \frac{1}{2(a_{n_2}(\mathcal{J})^2 + b_{n_2}(\mathcal{J})^2)^{\frac{1}{2}}\pi\sqrt{4(1+\tn_2^2c^2-\delta_0)-c^4}}.
    \end{align}
     Given $\mu \in \mathcal{J}$, we define $f_{n_2,\mu} \bydef \mathcal{F}^{-1}(\frac{1}{l_{n_2} - \mu})$.
    Then, 
    we obtain that
    \begin{equation}\label{eq : exp decay of fn2 stability}
   \sup_{\mu \in \mathcal{J}} |f_{n_2,\mu}(x)| \leq C_{n_2}(\mathcal{J}) e^{-a_{n_2}(\mathcal{J})|x|}, ~~ \text{ for all } x \in \R.
\end{equation}
\end{lemma}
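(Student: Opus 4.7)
The plan is to mirror the proof of Lemma~\ref{lem : computation of f and exp decay} applied to $\frac{1}{l_{n_2}-\mu}$ rather than to $\frac{1}{l_{n_2}}$, and then to pass from a $\mu$-wise bound to a uniform one by a monotonicity argument in $\mu$.

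First, I would fix an arbitrary $\mu \in \mathcal{J}$ and compute $f_{n_2,\mu}$ explicitly. Since $\mathcal{J} \subset (-\infty, 1-\tfrac{c^4}{4})$ and the essential spectrum enclosure at the start of Section~\ref{sec : enclosure of spectrum} gives $l_{n_2}(\xi) \geq 1-\tfrac{c^4}{4}$ for every $\xi \in \mathbb{R}$ and $n_2 \in \mathbb{N}_0$, the polynomial $\xi \mapsto l_{n_2}(\xi/2\pi)-\mu$ has no real roots; its four complex roots are of the form $\pm\xi_1(\mu),\, \pm\overline{\xi_1(\mu)}$ with $\xi_1(\mu) = b_{n_2}(\mu) - i\, a_{n_2}(\mu)$, where $a_{n_2}(\mu), b_{n_2}(\mu) \geq 0$ are obtained from the formulas of the lemma by replacing $\delta_0$ with $\mu$. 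Following the same partial-fraction decomposition and contour-integral evaluation (invoking Section 2.3.14 of \cite{transforms_poularikas_2010}) as in Lemma~\ref{lem : computation of f and exp decay}, I would arrive at the explicit $\mu$-dependent formula
\[
f_{n_2,\mu}(x) = -\frac{e^{-a_{n_2}(\mu)|x|}\bigl(a_{n_2}(\mu)\,\mathrm{sign}(x)\sin(b_{n_2}(\mu) x) + b_{n_2}(\mu)\cos(b_{n_2}(\mu) x)\bigr)}{2(a_{n_2}(\mu)^2+b_{n_2}(\mu)^2)\pi\sqrt{4(1+\tn_2^2 c^2 - \mu)-c^4}},
\]
and then, using $|\alpha \sin\theta + \beta \cos\theta| \leq \sqrt{\alpha^2 + \beta^2}$, deduce the pointwise estimate $|f_{n_2,\mu}(x)| \leq C_{n_2}(\mu)\, e^{-a_{n_2}(\mu)|x|}$, where $C_{n_2}(\mu)$ is the value of $C_{n_2}(\mathcal{J})$ with $\delta_0$ replaced by $\mu$.

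The remaining task is to pass to a uniform bound over $\mu \in \mathcal{J}$. It suffices to show that $\mu \mapsto a_{n_2}(\mu)$ is non-increasing and $\mu \mapsto C_{n_2}(\mu)$ is non-decreasing, so that the worst case is realized at $\mu = \delta_0 = \max\mathcal{J}$. Setting $s := 4(1+\tn_2^2 c^2 - \mu) - c^4 > 0$ and $P := c^2 - 2\tn_2^2$, a direct algebraic simplification collapses the nested radicals to the clean identity
\[
a_{n_2}(\mu)^2 + b_{n_2}(\mu)^2 = \tfrac{1}{2}\sqrt{P^2 + s},
\]
from which $C_{n_2}(\mu) = \tfrac{\sqrt{2}}{2\pi (P^2+s)^{1/4}\sqrt{s}}$ is manifestly non-decreasing in $\mu$. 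For $a_{n_2}^2 = s/\bigl(4(P+\sqrt{P^2+s})\bigr)$, differentiating in $s$ yields a numerator proportional to $2P\sqrt{P^2+s} + 2P^2 + s$, which is non-negative both for $P \geq 0$ (immediate) and for $P<0$ (after squaring the required inequality reduces to $s^2 \geq 0$). Hence $a_{n_2}(\mu) \geq a_{n_2}(\mathcal{J})$ and $C_{n_2}(\mu) \leq C_{n_2}(\mathcal{J})$ for every $\mu \in \mathcal{J}$, and combining these gives the claimed uniform bound \eqref{eq : exp decay of fn2 stability}.

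The hardest part is the monotonicity step: the nested radicals in $a_{n_2}(\mu)$ and $C_{n_2}(\mu)$ make a head-on differentiation clumsy, but the simplification $|\xi_1(\mu)|^2 = \tfrac{1}{2}\sqrt{P^2+s}$ (together with the companion identity $a_{n_2}^2 b_{n_2}^2 = s/16$) turns both monotonicities into short elementary arguments. The rest of the proof is a mechanical repetition of the residue calculus already carried out in Lemma~\ref{lem : computation of f and exp decay}.
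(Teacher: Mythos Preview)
Your proposal is correct and follows essentially the same approach as the paper: compute $f_{n_2,\mu}$ exactly as in Lemma~\ref{lem : computation of f and exp decay}, obtain the $\mu$-wise bound $|f_{n_2,\mu}(x)|\le C_{n_2}(\mu)e^{-a_{n_2}(\mu)|x|}$, and then use monotonicity in $\mu$ to evaluate at $\mu=\delta_0$. The paper's proof is extremely terse (it merely asserts that $a_{n_2}$ and $b_{n_2}$ are decreasing in $\mu$), whereas you supply the actual algebra; your slight variation of proving directly that $C_{n_2}(\mu)$ is non-decreasing via the simplification $a_{n_2}(\mu)^2+b_{n_2}(\mu)^2=\tfrac{1}{2}\sqrt{P^2+s}$ is equivalent to, and arguably cleaner than, the paper's route through the monotonicity of $b_{n_2}$.
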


\begin{proof}
    The proof is very similar to the one of Lemma \ref{lem : computation of f and exp decay}. Indeed, we compute $f_{n_2,\mu}$ as in the proof of Lemma \ref{lem : computation of f and exp decay}, and obtain that 
    \begin{align*}
        |f_{n_2,\mu}(x)| \leq C_{n_2}(\mu) e^{-a_{n_2}(\mu)|x|}, ~~ \text{ for all } x \in \R
    \end{align*}
where
    \begin{align*}
        a_{n_2}(\mu) &\bydef  \frac{\sqrt{4(1+\tn_2^2c^2-\mu)-c^4}}{2\sqrt{c^2-2\tilde{n}_2^2 + \sqrt{(c^2-2\tilde{n}_2^2)^2 + 4(1+c^2\tilde{n}_2^2-\mu)-c^4}}} ,  \\
    b_{n_2}(\mu) &\bydef   \frac{1}{2} \sqrt{c^2-2\tilde{n}_2^2 + \sqrt{(c^2-2\tilde{n}_2^2)^2 + 4(1+c^2\tilde{n}_2^2-\mu)-c^4}},\\
    C_{n_2}(\mu) &\bydef  \frac{1}{2(a_{n_2}(\mu)^2 + b_{n_2}(\mu)^2)^{\frac{1}{2}}\pi\sqrt{4(1+\tn_2^2c^2-\mu)-c^4}}.
    \end{align*}
    Subsequently, one can prove that both $a_{n_2}$ and $b_{n_2}$ are decreasing functions of $\mu$, leading to the desired result.
\end{proof}

Next, we aim at applying Theorem 4.3 from \cite{cadiot2025stabilityanalysislocalizedsolutions} in order to enclose the eigenvalues of  $D\mathbb{F}(\tilde{u})$ in intervals centered at the $\lambda_k$'s. Note that the set-up is slightly different since \cite{cadiot2025stabilityanalysislocalizedsolutions} focuses on  the domain $\R^m$, where \eqref{eq : suspension bridge equation} is posed on $\Omega = \R \times (-d_2,d_2)$. However, a very similar analysis applies and we summarize the main result of \cite{cadiot2025stabilityanalysislocalizedsolutions} in the case of $\Omega$.
    \begin{lemma}\label{lem : enclosure spectrum}
       Let $t > -\lambda_{min}$ and let $\mathcal{J} = [\lambda_{min},\delta_0]$. We define $E = (E_n)_{n \in \mathbb{N}_0^2}$ as 
       \begin{equation}
         E_n \bydef    \frac{(C_{n_2}(\mathcal{J}))^2 a_{n_2}(\mathcal{J})(-1)^{n_1}(1-e^{-4a_{n_2}(\mathcal{J})d_1})}{d_1(4a_{n_2}(\mathcal{J})^2 + (2\pi\tilde{n}_1)^2)}. 
       \end{equation}
       Furthermore,  define $a(\mathcal{J})$ and $C(d_1)$ as 
       \begin{align*}
           a(\mathcal{J}) \bydef \inf_{n_2 \in \mathbb{N}_0} a_{n_2}(\mathcal{J}) ~ \text{ and } ~ 
    C(d_1) \bydef 4d_1 + \frac{4e^{-a(\mathcal{J})d_1}}{a(\mathcal{J})(1-e^{-\frac{3}{2}a(\mathcal{J})d_1})} + \frac{2}{a(\mathcal{J})(1-e^{-2a(\mathcal{J})d_1})}.
       \end{align*}
       We introduce various constants satisfying the following  inequalities:
       \begin{align*}
          \mathcal{Z}_{u,1} &\geq 4d_1 (\bar{V}, \bar{V}*E),\\
         \mathcal{Z}_{u,2} &\geq  4d_1 (\bar{V},\bar{V}*,E)_2 + 2C(d_1) e^{-2a(\mathcal{J}) d_1}  (\bar{V},\bar{V}*E)_2\\
         \mathcal{Z}_{u,3} &\geq   \|\pi^N (S+ tI)^{-1}P^{-1}(L-\delta_0 I)\|_2 \mathcal{Z}_{u,2},\\
         \mathcal{C}_1 &\geq \frac{\|e^{\bbu}\|_1}{1-\delta_0 - \frac{c^4}{4}} \frac{e^{\kappa_2 r_0}-1}{r_0},\\
         \mathcal{C}_2 &\geq \|e^{\bbu}\|_1 \frac{\|\pi^N (S+ tI)^{-1}P^{-1}(L-\delta_0 I)\|_2}{1-\delta_0 - \frac{c^4}{4}} \frac{e^{\kappa_2 r_0}-1}{r_0}, \\
           Z_{1,1} &\geq  \frac{1}{1-\delta_0 - \frac{c^4}{4}}\|\pi_N (L - \delta_0 I)^{-1}R \pi^N\|_2, ~~~~ Z_{1,2} \geq  \frac{1}{1-\delta_0 - \frac{c^4}{4}}\|\pi_N (L - \delta_0 I)^{-1}R\pi_N\|_2,\\
       Z_{1,3} &\geq\| \pi^N(S+ tI)^{-1}R\pi^N\|_2, ~~~~ Z_{1,4} \geq \| \pi^N(S+ tI)^{-1}R\pi_N\|_2.
       \end{align*}
        Assuming
  \begin{equation}\label{eq : condition C1 r0}
      \mathcal{C}_1r_0 <1,
  \end{equation}
 we define $\beta_1$ as 
$\beta_1 \bydef\frac{\mathcal{Z}_{u,1}+\mathcal{C}_1r_0}{1-\mathcal{C}_1r_0}.$
Furthermore, we assume that
\begin{equation}\label{eq : condition for kappa1 and C1 r0}
    1-Z_{1,2}-\mathcal{Z}_{u,2} - (1+\beta_1^2)^\frac{1}{2}\mathcal{C}_1r_0 > 0.
\end{equation}
 Then, we define
\begin{equation*}
\begin{aligned}
\beta_2 &\bydef \frac{Z_{1,1} + (\mathcal{Z}_{u,2} + (1+\beta_1^2)^\frac{1}{2}\mathcal{C}_1r_0)\|P^N\|_2}{1-Z_{1,2}-\mathcal{Z}_{u,2} - (1+\beta_1^2)^\frac{1}{2}\mathcal{C}_1r_0},\\
\epsilon_n^{(q)} &\bydef |\lambda_n + t|\left(Z_{1,3} + Z_{1,4} \frac{Z_{1,1} + \mathcal{Z}_{u,2}\|P^N\|_2}{1-Z_{1,2}-\mathcal{Z}_{u,2}} +  \mathcal{Z}_{u,3}\left(\|P^N\|_2 +   \frac{Z_{1,1} + \mathcal{Z}_{u,2}\|P^N\|_2}{1-Z_{1,2}-\mathcal{Z}_{u,2}} \right) \right), \\
      \epsilon_n^{(\infty)} &\bydef |\lambda_n + t|\left(Z_{1,3} + Z_{1,4} \beta_2 + \left(\mathcal{Z}_{u,3} + \mathcal{C}_2r_0(1+\beta_1^2)^\frac{1}{2} \right)\left(\|P^N\|_2 +   \beta_2 \right)   \right),\\
      \epsilon_n &\bydef \max\{\epsilon_n^{(q)},\epsilon_n^{(\infty)}\}
\end{aligned}
\end{equation*}
for all $n \in \mathbb{N}_0^2$. Let  $k \in \mathbb{N}$ and $I \subset \mathbb{N}_0^2$ such that $|I| = k$.

If $\cup_{n \in I} [\lambda_n - \epsilon_n, \lambda_n + \epsilon_n] \subset \mathcal{J}$ and $\left(\cup_{n \in I} [\lambda_n - \epsilon_n, \lambda_n + \epsilon_n]\right) \bigcap\left( \cup_{n \in \mathbb{N}^2_0 \setminus I} [\lambda_n - \epsilon_n, \lambda_n + \epsilon_n]\right) = \varnothing$, then there are exactly $k$ eigenvalues of $D\mathbb{F}(\tilde{u})$ in $\cup_{n \in I} [\lambda_n - \epsilon_n, \lambda_n + \epsilon_n] \subset  \mathcal{J}$ counted with multiplicity.
    \end{lemma}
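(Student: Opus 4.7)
The plan is to adapt Theorem 4.3 of \cite{cadiot2025stabilityanalysislocalizedsolutions} to the strip geometry $\Omega = \R \times (-d_2,d_2)$. The guiding idea is contrapositive: fix $\mu \in \mathcal{J}$ that lies outside every interval $[\lambda_n - \epsilon_n, \lambda_n + \epsilon_n]$ and show that $D\mathbb{F}(\tilde{u}) - \mu I$ is invertible, so $\mu \notin Eig(D\mathbb{F}(\tilde{u}))$. Once this is established, a homotopy argument from $DF(\bbu) - \mu I$ (whose eigenvalues are controllable via the pseudo-diagonalization $\mathcal{D} = P^{-1}DF(\bbu)P$) to $D\mathbb{F}(\tilde{u}) - \mu I$ will provide the eigenvalue count inside any union $\cup_{n \in I}[\lambda_n - \epsilon_n, \lambda_n + \epsilon_n]$ that is isolated from the rest.

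To invert $D\mathbb{F}(\tilde{u}) - \mu I$, I would first write it as the sum of its Fourier part $DF(\bbu) - \mu I$ (realized on $\om$ through $\Gamma^\dagger$) plus two remainders: one corresponding to the difference between $D\mathbb{F}(\bu)$ acting on $\Omega$ and its periodized counterpart on $\om$ (this is exactly the ``unbounded-domain'' piece, controlled by $\mathcal{Z}_{u,1}, \mathcal{Z}_{u,2}, \mathcal{Z}_{u,3}$), and one corresponding to $D\mathbb{G}(\tilde{u}) - D\mathbb{G}(\bu) = \mathbb{M}_{e^{\tilde{u}}-e^{\bu}}$ (controlled via Lemma \ref{lem : banach algebra} applied to $\tilde{u}-\bu$, whence the bounds $\mathcal{C}_1, \mathcal{C}_2$). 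The crucial ingredient distinguishing the strip setting from \cite{cadiot2025stabilityanalysislocalizedsolutions} is the estimate on the kernel of $(\mathbb{L}_{n_2} - \mu I)^{-1}$; here, Lemma \ref{lem : estimation fn2 mu} supplies the exponential decay $|f_{n_2,\mu}(x)| \leq C_{n_2}(\mathcal{J})e^{-a_{n_2}(\mathcal{J})|x|}$ uniformly on $\mu \in \mathcal{J}$, which is what feeds the explicit formulas for $\mathcal{Z}_{u,1}$ and $\mathcal{Z}_{u,2}$ (these mirror Lemma \ref{lem : Zu bound} but with $a_{n_2}(\mathcal{J}), C_{n_2}(\mathcal{J})$, giving the sequence $E$).

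The technical core is a two-step Neumann series / block-inversion argument. After conjugating by $P$ and by a shift $(S+tI)^{-1}$ (whose purpose is to make $S+tI$ strictly positive), the operator takes the form $I + R\cdot(\text{low-frequency block}) + (\text{tail})$. The tail piece is inverted first using $Z_{1,2}$ and $\mathcal{Z}_{u,2}$ together with condition \eqref{eq : condition for kappa1 and C1 r0}, producing $\beta_1$ and $\beta_2$. Substituting back gives a finite-dimensional quasi-inverse whose defect is bounded by $\epsilon_n/|\lambda_n + t|$ in the $n$-th mode, which is exactly the definition of $\epsilon_n^{(q)}$ and $\epsilon_n^{(\infty)}$. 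Non-invertibility at $\mu$ therefore forces $\mu$ to lie within $\epsilon_n$ of some $\lambda_n$, which is the desired eigenvalue enclosure.

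Finally, to upgrade ``enclosure'' to a \emph{count}, I would run a homotopy $DF(\bbu) + s\bigl(D\mathbb{F}(\tilde{u}) - DF(\bbu)\bigr)$ on $s \in [0,1]$, noting that the bounds above hold uniformly in $s$, so no eigenvalue can cross the boundary of an isolated cluster $\cup_{n \in I}[\lambda_n - \epsilon_n, \lambda_n + \epsilon_n]$; continuity of the spectral projector then transports the count at $s=0$ (which is literally $|I|$ by construction of $S$) to $s=1$. The main obstacle is not any single estimate in isolation but rather tracking precisely how the strip geometry propagates through the Fourier/physical-space dictionary $\gamma, \gamma^\dagger$ when $\mu$ varies in $\mathcal{J}$; this is why one must use the $\mathcal{J}$-uniform constants $a_{n_2}(\mathcal{J}), C_{n_2}(\mathcal{J})$ rather than the pointwise constants of Lemma \ref{lem : computation of f and exp decay}, and why the factor $\|\pi^N(S+tI)^{-1}P^{-1}(L-\delta_0 I)\|_2$ appears in $\mathcal{Z}_{u,3}$ and $\mathcal{C}_2$ to convert $\mathcal{H}$-norm estimates into $\ell^2$-norm estimates in the diagonalized coordinates.
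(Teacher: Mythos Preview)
Your proposal is correct and follows essentially the same route as the paper: both reduce to the framework of Lemmas 4.1--4.2 and Theorem 4.3 of \cite{cadiot2025stabilityanalysislocalizedsolutions}, with the strip-specific input being the $\mu$-uniform resolvent kernel estimate of Lemma \ref{lem : estimation fn2 mu} fed into the $\mathcal{Z}_u$-type bounds of Lemma \ref{lem : Zu bound}. The paper's proof adds two small details you glossed over: the extra factor $2$ in $\mathcal{Z}_{u,1},\mathcal{Z}_{u,2}$ (i.e.\ $4d_1$ rather than $2d_1$) is there so that the same constants serve simultaneously as the $\mathcal{Z}_{u,i}^{(q)}$ bounds required in \cite{cadiot2025stabilityanalysislocalizedsolutions}, and the reduction of the $\sup_{\mu\in\mathcal{J}}$ in $\mathcal{Z}_{u,3}$ to the single value $\mu=\delta_0$ is justified by showing $\mu\mapsto C_{n_2}(\mu)\frac{1-c^4/4-\mu}{1-c^4/4-\delta_0}$ is increasing on $\mathcal{J}$.
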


\begin{proof}
    The proof of the lemma follows the ones of Lemmas 4.1 and 4.2 in \cite{cadiot2025stabilityanalysislocalizedsolutions}. Following Lemma 4.1 in \cite{cadiot2025stabilityanalysislocalizedsolutions}, we want to compute $\mathcal{Z}_{u,1},\mathcal{Z}_{u,2}$ such that 
    \begin{align*}
         \mathcal{Z}_{u,1} &\geq  \sup_{\mu \in \mathcal{J}} \left\|\mathbb{1}_{\Omega \setminus \Omega_0} \left(\mathbb{L} - \mu I\right)^{-1} D\mathbb{G}(\bu)\right\|_2,\\
\mathcal{Z}_{u,2} & \geq \sup_{\mu \in \mathcal{J}} \| \mathbb{1}_{\Omega_0} \left(\Gamma^\dagger\left((L- \mu I )^{-1}\right) - (\mathbb{L}- \mu I)^{-1}\right)D\mathbb{G}(\bu) \|_2.
    \end{align*}
    Noticing that $\left(\mathbb{L}_{n_2} - \mu I\right)^{-1}$ is the convolution operator associated to the function $f_{n_2,\mu}$ defined in Lemma \ref{lem : estimation fn2 mu}, we combine Lemma \ref{lem : Zu bound} and Lemma \ref{lem : estimation fn2 mu} to obtain that
    \small{
     \begin{align*}
         2d_1 (\bar{V}, \bar{V}*E) &\geq  \sup_{\mu \in \mathcal{J}} \left\|\mathbb{1}_{\Omega \setminus \Omega_0} \left(\mathbb{L} - \mu I\right)^{-1} D\mathbb{G}(\bu)\right\|_2,\\
2d_1 (\bar{V},\bar{V}*E)_2 + C(d_1) e^{-2a d_1}  (\bar{V},\bar{V}*E)_2 & \geq \sup_{\mu \in \mathcal{J}} \| \mathbb{1}_{\Omega_0} \left(\Gamma^\dagger\left((L- \mu I )^{-1}\right) - (\mathbb{L}- \mu I)^{-1}\right)D\mathbb{G}(\bu) \|_2.
    \end{align*}
    }\normalsize
In particular, this validates our choice for $\mathcal{Z}_{u,1},\mathcal{Z}_{u,2}$. In fact, as explained in the proof of Theorem 5.2 in \cite{cadiot2025stabilityanalysislocalizedsolutions}, the extra factor $2$ in the bounds $\mathcal{Z}_{u,1},\mathcal{Z}_{u,2}$ allows them to satisfy the inequalities for $\mathcal{Z}_{u,1}^{(q)},\mathcal{Z}_{u,2}^{(q)}$ in Lemma 4.2 in \cite{cadiot2025stabilityanalysislocalizedsolutions}. This means that we can choose $\mathcal{Z}_{u,1}^{(q)} =\mathcal{Z}_{u,1}$ and $\mathcal{Z}_{u,2}^{(q)} = \mathcal{Z}_{u,2}$ in the aforementioned lemma.
    For the bound $\mathcal{Z}_{u,3}$, we need 
    \begin{align*}
        \mathcal{Z}_{u,3} & \geq \sup_{\mu \in \mathcal{J}}  \|\pi^N (S+ tI)^{-1}P^{-1}(L-\mu I)\|_2 \| \mathbb{1}_{\Omega_0} \left(\Gamma^\dagger\left((L- \mu I )^{-1}\right) - (\mathbb{L}- \mu I)^{-1}\right)D\mathbb{G}(\bu) \|_2.
    \end{align*}
    Let $\mu \in \mathcal{J}$, then we have
    \begin{align*}
        \|\pi^N (S+ tI)^{-1}P^{-1}(L-\mu I)\|_2  \leq  \|\pi^N (S+ tI)^{-1}P^{-1}(L-\delta_0 I)\|_2 \|(L-\delta_0 I)^{-1}(L-\mu I)\|_2.
    \end{align*}
    Then, we have
    \begin{align*}
        \|(L-\delta_0 I)^{-1}(L-\mu I)\|_2 = \sup_{n \in \mathbb{N}_0^2} \frac{l(\tilde{n})-\mu}{l(\tilde{n})-\delta_0} = \sup_{n \in \mathbb{N}_0^2} 1 + \frac{\delta_0-\mu}{l(\tilde{n})-\delta_0} =  \frac{ 1 -\frac{c^4}{4}-\mu}{1 -\frac{c^4}{4}-\delta_0}
    \end{align*}
    using Lemma \ref{lem : results on l}. Then, recalling $C_{n_2}(\mu)$ from the proof of Lemma \ref{lem : estimation fn2 mu}, one can prove that $\mu \mapsto C_{n_2}(\mu)\frac{ 1 -\frac{c^4}{4}-\mu}{1 -\frac{c^4}{4}-\delta_0}$ is increasing in $\mu$ and consequently the maximum is reached at $\mu = \delta_0$ when considering $\mu \in \mathcal{J}$. This concludes the inequality for $\mathcal{Z}_{u,3}$. The computations of $\mathcal{C}_1$ and $\mathcal{C}_2$ follow directly from the proof of Lemma \ref{lem : Z2 bound}. In particular, we also use that $\|(\mathbb{L}-\mu I)^{-1}\|_2 \leq \frac{1}{1-\frac{c^4}{4}-\delta_0}$ for all $\mu \in \mathcal{J}$. The rest of the proof follows from \cite{cadiot2025stabilityanalysislocalizedsolutions}.
\end{proof}

% \begin{proof}
%     The proof is 
% \end{proof}

The previous lemma provides explicit formulas for the computation of intervals, called Gershgorin intervals, $[\lambda_j - \epsilon_j, \lambda_j + \epsilon_j]$ containing the eigenvalues of $D\mathbb{F}(\tilde{u}).$ In fact, it also allows to count the number of eigenvalues in a union of such intervals (if disjoint from the rest of the intervals). This enables us to exactly count the number of negative eigenvalues of $D\mathbb{F}(\tilde{u})$, having in mind the application of Lemma \ref{lem : conditions for stability}.

\subsection{Enclosure of $\theta$}\label{sec : enclosure of theta}

In this section, we provide a computer-assisted strategy for enclosing the value of $\theta$ defined in Lemma \ref{lem : conditions for stability}. In fact, we derive the following result.

\begin{lemma}\label{lem : enclosure of theta}
Let $r_0 >0$  and let $\tilde{u} \in \mathcal{H}$ be a solution to \eqref{eq : suspension bridge equation} such that $\|\tilde{u}-\bar{u}\|_{\mathcal{H}} \leq r_0$, where $\bar{u} \in \mathcal{H}$ is constructed as in \eqref{def : approx solution}.
Choose $\overline{W} \in \pi^{N}\ell^2$ such that $\overline{w} \bydef \gamma^\dagger(\overline{W}) \in \mathcal{H}$.
    Let $\epsilon_0, \epsilon >0$ be defined as 
    \begin{align}
    \epsilon_0 &\bydef \kappa_1 r_0 + 4c^2 \|D\mathbb{F}(\tilde{u})^{-1}\|_2 \left(\|\partial_{x_1}\bu + \frac{1}{2c} D\mathbb{F}(\bu) \overline{w}\|_2 + \frac{r_0}{2-c^2}  + \frac{1}{2c}(1-e^{\kappa_2r_0}) \|e^{\bbu}\overline{W}\|_1\right), \nonumber\\
        \epsilon &\bydef \|\bu+ 2c \overline{w}\|_2  \|\partial_{x_1}^2 \mathbb{L}^{-1}\|_2 r_0 + \epsilon_0 \left(\|\partial_{x_1}^2\bu\|_2  + \frac{r_0}{2-c^2} \right).
    \end{align}
    Then, we obtain that 
    \begin{align}
        \theta \in [\theta_0-\epsilon, \theta_0 + \epsilon],
    \end{align}
    where 
    \[
    \theta_0 \bydef |\om| \left((\bbu+ 2c \overline{W}), \partial_{x_1}^2\bbu\right)_2, ~~ \text{ and where } (\partial_{x_1}^2\bbu)_n = -\frac{n_1^2 \pi^2}{d_1^2} \bbu_n \text{ for all } (n_1,n_2) \in \mathbb{N}_0^2.
    \]
\end{lemma}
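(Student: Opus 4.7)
The plan is to transfer $\theta_0$ to the $L^2(\Omega)$ pairing and then bound $\theta - \theta_0$ by Cauchy--Schwarz after a natural telescoping. Since $\bu$, $\bar{w}$, and $\partial_{x_1}^2\bu$ all have support in $\overline{\om}$ with cosine-series representations $\bbu$, $\overline{W}$, $(\partial_{x_1}^2 \bbu)_n = -(\pi n_1/d_1)^2 \bbu_n$, Parseval's identity on $\om$ gives
\[
\theta_0 \;=\; |\om|\bigl(\bbu + 2c\overline{W},\, \partial_{x_1}^2 \bbu\bigr)_2 \;=\; \bigl(\bu + 2c\bar{w},\, \partial_{x_1}^2 \bu\bigr)_{L^2(\Omega)}.
\]
Writing $\tilde u = \bu + (\tilde u - \bu)$ and $\tilde w = \bar w + (\tilde w - \bar w)$, I would split
\[
\theta - \theta_0 \;=\; \bigl(\bu + 2c\bar w,\, \partial_{x_1}^2(\tilde u - \bu)\bigr)_{L^2(\Omega)} + \bigl((\tilde u - \bu) + 2c(\tilde w - \bar w),\, \partial_{x_1}^2 \tilde u\bigr)_{L^2(\Omega)}.
\]

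For the first summand, Cauchy--Schwarz together with the symbol computation $\|\partial_{x_1}^2 \iL\|_{2} = \sup_{\xi \in \R^2} (2\pi\xi_1)^2/l(\xi) = 1/(2-c^2)$ (straightforward optimization in $\xi_2 = 0$, then in $\xi_1$) yields $\|\bu + 2c\bar w\|_2 \cdot \|\partial_{x_1}^2 \iL\|_2 \cdot r_0$, which is exactly the first term of $\epsilon$. For the second summand, the triangle inequality provides $\|\partial_{x_1}^2 \tilde u\|_2 \leq \|\partial_{x_1}^2 \bu\|_2 + r_0/(2-c^2)$, so that everything reduces to proving $\|(\tilde u - \bu) + 2c(\tilde w - \bar w)\|_2 \leq \epsilon_0$.

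The piece $\|\tilde u - \bu\|_2 \leq \kappa_1 r_0$ follows directly from the symbol bound $\|v\|_2 \leq \kappa_1 \|v\|_{\mathcal H}$ established in the proof of Lemma \ref{lem : banach algebra}; this accounts for the leading $\kappa_1 r_0$ in $\epsilon_0$. For $2c(\tilde w - \bar w)$, I would exploit the defining relation $D\mathbb{F}(\tilde u) \tilde w = -2c\, \partial_{x_1}^2 \tilde u$ to write
\[
\tilde w - \bar w \;=\; -D\mathbb{F}(\tilde u)^{-1}\bigl[\,2c\,\partial_{x_1}^2\bu + D\mathbb{F}(\bu)\bar w\,\bigr] \;-\; D\mathbb{F}(\tilde u)^{-1}\bigl[\,2c\,\partial_{x_1}^2(\tilde u - \bu) + (e^{\tilde u} - e^{\bu})\bar w\,\bigr],
\]
so that the prefactor $4c^2 \|D\mathbb{F}(\tilde u)^{-1}\|_2$ in $\epsilon_0$ arises from the outer $2c$ combined with factoring another $2c$ inside the bracket. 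The three resulting terms then match, in order, the residual $\|\partial_{x_1}^2 \bu + (2c)^{-1} D\mathbb{F}(\bu)\bar w\|_2$ (measuring how accurately $\bar w$ solves the equation for $\tilde w$), the derivative error $r_0/(2-c^2)$, and the exponential commutator.

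The main obstacle is the exponential commutator $(e^{\tilde u} - e^{\bu})\bar w$, which must be bounded by a quantity computable on Fourier coefficients alone. The strategy is to factor $e^{\tilde u} - e^{\bu} = e^{\bu}(e^{\tilde u - \bu} - 1)$, apply $\|e^{\tilde u - \bu} - 1\|_\infty \leq e^{\kappa_2 r_0} - 1$ from the proof of Lemma \ref{lem : banach algebra}, and, since $\bar w$ is supported in $\overline{\om}$ with Fourier data $\overline{W}$, express $e^{\bu}\bar w$ through the convolution $e^{\bbu} \ast \overline{W}$ on $\om$; Young's inequality \eqref{eq:young} then produces the $\ell^1$-norm $\|e^{\bbu}\overline W\|_1$ appearing in $\epsilon_0$. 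Tracking constants carefully through these three estimates and combining by the triangle inequality gives $\|\tilde u - \bu + 2c(\tilde w - \bar w)\|_2 \leq \epsilon_0$; substituting into the Cauchy--Schwarz estimate of the second summand yields the claimed enclosure $\theta \in [\theta_0 - \epsilon, \theta_0 + \epsilon]$.
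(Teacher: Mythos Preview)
Your approach is essentially identical to the paper's: the same telescoping of $\theta-\theta_0$, the same Cauchy--Schwarz/symbol estimates for $\partial_{x_1}^2\mathbb{L}^{-1}$, and the same reduction to bounding $\|\tilde w-\bar w\|_2$ via the defining equation for $\tilde w$. The one place you diverge slightly is the exponential commutator: to recover precisely the factor $\|e^{\bbu}\overline W\|_1$ in $\epsilon_0$ you should pair the H\"older inequality the other way around, putting $e^{\bu}\bar w$ in $L^\infty$ (bounded by the $\ell^1$ norm of its Fourier coefficients on $\om$, not via Young) and $e^{\tilde u-\bu}-1$ in $L^2$; your version with $\|e^{\tilde u-\bu}-1\|_\infty$ and $\|e^{\bu}\bar w\|_2$ gives a valid but slightly different constant.
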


\begin{proof}
    Recalling the definition of $\theta$ in \eqref{def : definition of theta}, we have
    \begin{align*}
       \theta  =  \int_{\Omega} (\bu+ 2c \overline{w})\partial_{x_1}^2\bu + \int_{\Omega} (\bu+ 2c \overline{w})\partial_{x_1}^2(\tilde{u} - \bu) + \int_{\Omega} (\tilde{u} - \overline{ u} + 2c \tilde{w} - 2c \overline{w})\partial_{x_1}^2\tilde{u},
    \end{align*}
    where $\overline{w}\in \mathcal{H}$ is an approximation of $\tilde{w}$. In particular, employing Cauchy-Schwarz inequality, we have for the second term
    \begin{align*}
        \left| \int_{\Omega} (\bu+ 2c \overline{w})\partial_{x_1}^2(\tilde{u} - \bu)\right| &\leq \|\bu+ 2c \overline{w}\|_2 \|\partial_{x_1}^2(\tilde{u} - \bu)\|_2\\
        &\leq \|\bu+ 2c \overline{w}\|_2  \|\partial_{x_1}^2 \mathbb{L}^{-1}\|_2 \|\mathbb{L}(\tilde{u} - \bu)\|_2\\
        &\leq  \|\bu+ 2c \overline{w}\|_2  \|\partial_{x_1}^2 \mathbb{L}^{-1}\|_2 r_0
    \end{align*}
    using that $\|\mathbb{L}(\tilde{u} - \bu)\|_2 = \|\tilde{u} - \bu\|_{\mathcal{H}} \leq r_0.$ Applying the Fourier transform, we have for the second norm on the right-hand side that
    \begin{align*}
        \|\partial_{x_1}^2 \mathbb{L}^{-1}\|_2 \leq \sup_{\xi \in \R^2} \frac{(2\pi\xi_1)^2}{l(\xi)} \leq \frac{1}{2-c^2}.
    \end{align*}
   Furthermore, we have for the third term of $\theta$ that
    \begin{align*}
        \left|\int_{\Omega} (\tilde{u} - \overline{ u} + 2c \tilde{w} - 2c \overline{w})\partial_{x_1}^2\tilde{u}\right| &\leq \|\tilde{u} - \overline{ u} + 2c \tilde{w} - 2c \overline{w}\|_2 \|\partial_{x_1}^2\tilde{u}\|_2\\
        &\leq (\|\tilde{u} - \overline{ u}\|_2 + 2c \|\tilde{w} -  \overline{w}\|_2) (\|\partial_{x_1}^2\bu\|_2  + \|\partial_{x_1}^2(\tilde{u}-\bu)\|_2)\\
        &\leq \left(\kappa_1 r_0 + 2c \|\tilde{w} -  \overline{w}\|_2\right) \left(\|\partial_{x_1}^2\bu\|_2  + \tfrac{r_0}{2-c^2} \right).
    \end{align*}
    It remains to estimate $ \|\tilde{w} -  \overline{w}\|_2$. By definition of $\tilde{w}$ in \eqref{def : definition of tilde v}, we have
    \begin{align*}
        \|\tilde{w}-\overline{w}\|_2 = \| -2c D\mathbb{F}(\tilde{u}) \partial_{x_1}^2 \tilde{u} - \overline{w}\|_2 \leq 2c \|D\mathbb{F}(\tilde{u})^{-1}\|_2 \|\partial_{x_1}\tilde{u} + \tfrac{1}{2c} D\mathbb{F}(\tilde{u}) \overline{w}\|_2.  
    \end{align*}
    For the second norm on the right-hand side
    \begin{align*}
        \|\partial_{x_1}\tilde{u} + \frac{1}{2c} D\mathbb{F}(\tilde{u}) \overline{w}\|_2 \leq \|\partial_{x_1}\bu + \frac{1}{2c} D\mathbb{F}(\bu) \overline{w}\|_2 + \left|\partial_{x_1}^2(\bu-\tilde{u})\right\|_2 + \frac{1}{2c}\|D\mathbb{F}(\bu) \overline{w} - D\mathbb{F}(\tilde{u}) \overline{w}\|_2.
    \end{align*}
   For the last term, we estimate
    \begin{align*}
        \|D\mathbb{F}(\bu) \overline{w} - D\mathbb{F}(\tilde{u}) \overline{w}\|_2 = \|(e^{\bu}-e^{\tilde{u}})\overline{w}\|_2 \leq \|1 - e^{\tilde{u}-\bu}\|_2 \|e^{\bu}\overline{w}\|_\infty\\
        \leq (1-e^{\kappa_2r_0}) \|e^{\bbu}\overline{W}\|_1.
    \end{align*}
    Finally, using $\bu = \gamma^\dagger(\bbu)$ and $\overline{w} = \gamma^\dagger(\overline{W})$, we apply Parseval's identity to get
    \begin{align*}
        \int_{\Omega} (\bu+ 2c \overline{w})\partial_{x_1}^2\bu = |\om| \left((\bbu+ 2c \overline{W}), \partial_{x_1}^2\bbu\right)_2,
    \end{align*}
    where $(\partial_{x_1}^2\bbu)_n = -\frac{n_1^2 \pi^2}{d_1^2} \bbu_n$ for all $(n_1,n_2) \in \mathbb{N}_0^2$. After algebraic manipulation, this concludes the proof.
\end{proof}

Using the previous result, we obtain a computer-assisted strategy for the enclosure of the value of $\theta$. Indeed, the constant $\epsilon$ depends on finite-dimensional computations, which can be handled using rigorous numerics (cf. \cite{julia_our_code}). In particular, this allows us to conclude about the stability of $\tilde{u}$. 

\begin{remark}
    We emphasize that the orbital stability established here is restricted to the subspace of perturbations that are symmetric with respect to $x_2$. While we explicitly decomposed the space $L^2(\Omega)$ into even and odd subspaces with respect to the $x_1$-direction (see \eqref{def : Hc and Hs}), our choice of ansatz in \eqref{eq:ansatz} restricts the analysis to functions that are even in $x_2$. Consequently, the analysis does not rule out the potential existence of unstable functions that are antisymmetric in $x_2$, which may still satisfy the boundary conditions \eqref{eq : neumann boundary conditions}. To prove stability against the full class of admissible perturbations satisfying \eqref{eq : neumann boundary conditions}, one can employ a domain doubling technique in the $x_2$-direction and mirror and shift the solution, as demonstrated in \cite{BredenBerg}.
\end{remark}
\section{Results}\label{sec : results}

In this section, we present computer-assisted proofs for the existence of solutions to \eqref{eq : suspension bridge equation} with boundary conditions \eqref{eq : neumann boundary conditions}. To do so, we begin with approximate solutions obtained by Newton's method and initialized with carefully constructed guesses based on approximations in the periodic setting described in \cite{Aalst2024}. To extend the initial guesses in the periodic case to the infinite setting, we project them into the kernel of matrix $\mathcal{T}$, as described in Section \ref{sec:constructionu0}. 

By combining these new approximations with the estimates established in the previous section, we obtain a validated proof of existence for solutions to the suspension bridge equation within explicitly controlled error bounds. This leads to the following theorem statement:
\begin{theorem}\label{thm:proof}
    For $c=1.2$, $d=(\frac{\pi}{0.06},\frac{\pi}{0.24})$, $N_0=(150,80)$, $N=(40,40)$, there exists a solution $\tilde{u} \in \mathcal{H} \cap C^\infty(\Omega)$ to \eqref{eq : suspension bridge equation} on $\Omega$ with boundary conditions  \eqref{eq : neumann boundary conditions} in the $x_2$-direction such that 
    \begin{align*}
        \|\tilde{u}-\bu\|_\mathcal{H} \leq 9.5 \cdot 10^{-8},
    \end{align*}
    where $\bu$ is a numerically computed approximation visualized in Figure \ref{fig:proof}. Moreover, $D\mathbb{F}(\tilde{u}) : \mathcal{H} \to L^2(\Omega)$ possesses exactly one negative eigenvalue and $\theta >0$, where $\theta$ is given in \eqref{def : definition of theta}. Finally, $\tilde{u}$ is orbitally stable.
\end{theorem}
\begin{proof}
    Recall that $\nu$ and $N^\mathrm{FFT}$ are defined in Section \ref{sec:nonlin}. We choose $\nu=(1.1,1.1)$ and $N^\mathrm{FFT}=(512,512)$. Finitely many cosine coefficients of the approximate solution $\bu$, visualized in Figure \ref{fig:proof}, can be found in the code in \cite{julia_our_code}. We report the values for the bounds in floating point numbers displaying finitely many decimals. We have
    \begin{align*}
    \mathcal{Y}_0=6.0005 \cdot 10^{-8}, \quad \mathcal{Z}_1=3.6608 \cdot 10^{-1}, \quad \mathcal{Z}_2=22.7926.
\end{align*}
Letting $\tilde{u}$ denote the zero of $\mathbb{F}$, we apply Theorem \ref{thm:existence} to obtain the error bound
\begin{align*}
    \|\tilde{u}-\bu\|_\mathcal{H} \leq 9.4657 \cdot 10^{-8}.
\end{align*}
Note that the regularity of $\tilde{u}$ (and the fact that it is infinitely differentiable) is a direct consequence of Proposition 2.5 in \cite{Cadiot2023}. 
Concerning the enclosure of the spectrum, we apply the analysis derived in Section \ref{sec : enclosure of spectrum}. In particular, we apply Lemma \ref{lem : enclosure spectrum} and prove that $D\mathbb{F}_e(\tilde{u})$ possesses exactly one negative eigenvalue and the rest of its spectrum is purely positive. Indeed, we prove that one Gershgorin interval $[\lambda_0-\epsilon_0, \lambda_0 + \epsilon_0] \subset [-0.1743, -0.1286]$ of Lemma \ref{lem : enclosure spectrum} is fully contained in the negative part of $\R$, and the rest of the intervals are disjoint from $[\lambda_0-\epsilon_0, \lambda_0 + \epsilon_0]$ and fully located in the positive part of $\R$. 

Moreover, for $D\mathbb{F}_o(\tilde{u})$, we obtain an interval $[\lambda_0-\epsilon_0, \lambda_0 + \epsilon_0] \subset [-0.0364, 0.0119]$ (using the notation of Lemma \ref{lem : enclosure spectrum}) which is disjoint from the rest of the intervals and which contains the number $0$. We also prove that the rest of the spectrum is purely positive. Consequently, there exists an eigenvalue $\lambda$, of undetermined sign, in $[\lambda_0-\epsilon_0, \lambda_0 + \epsilon_0]$. However, we prove that $\lambda = 0$ is an eigenvalue. Indeed, since $\tilde{u}$ is a zero of $\mathbb{F}$, we have
\begin{align}
   0 =  \partial_{x_1}\mathbb{F}(\tilde{u}) = D\mathbb{F}(\tilde{u}) \partial_{x_1} \tilde{u}.
\end{align}
This implies that $\partial_{x_1} \tilde{u}$ is in the kernel of $D\mathbb{F}(\tilde{u})$. Moreover, since $\tilde{u} \in \mathcal{H}$, then $\partial_{x_1} \tilde{u} \in L^2_o$, using the notation of \eqref{def : Hc and Hs}. In particular, since $\tilde{u} \neq 0$, we obtain that $0 \in Eig(D\mathbb{F}_o(\tilde{u}))$. This implies that $\lambda = 0.$ Concerning the sign of $\theta$, we use Lemma \ref{lem : enclosure of theta} and prove that $\theta >0$. Finally, we conclude the proof using Lemma \ref{lem : conditions for stability}.
\end{proof}
The code used to obtain the above result, as well as the results presented later in this section, can be found in \cite{julia_our_code}. The computations were carried out in Julia (v1.10.4) on an Apple M1 Pro CPU with $32$ GB RAM. We heavily relied on interval arithmetic using the packages IntervalArithmetic.jl (\cite{IntervalArithmetic.jl}) and RadiiPolynomial.jl (\cite{RadiiPolynomial.jl}) to ensure that the computations are rigorous.

The method developed in this paper is versatile and can also be applied to rigorously prove existence of solutions on infinite strips for other parameter values (e.g. different $c$, $d_1$, $d_2$) and with alternative spatial patterns. Two illustrative examples are presented in Figure~\ref{fig:altproofs}. In contrast to the approximation shown in Figure~\ref{fig:proof}, which contains a single prominent peak at the bottom, the approximations in Figure~\ref{fig:altproofs} show an alternative spatial pattern characterized by two dominant peaks at the bottom. In Figure~\ref{fig:c13}, the parameter $c$ is slightly increased relative to the value used in Theorem~\ref{thm:proof}, while in Figure~\ref{fig:c08}, it is decreased. The plots indicate that reducing the value of $c$ leads to an increase in the amplitude of the resulting wave patterns.

\begin{figure}[htp]
    \centering
    \subfloat[Approximate solution for $c=1.3$.]{%
        \includegraphics[width=0.5\textwidth]{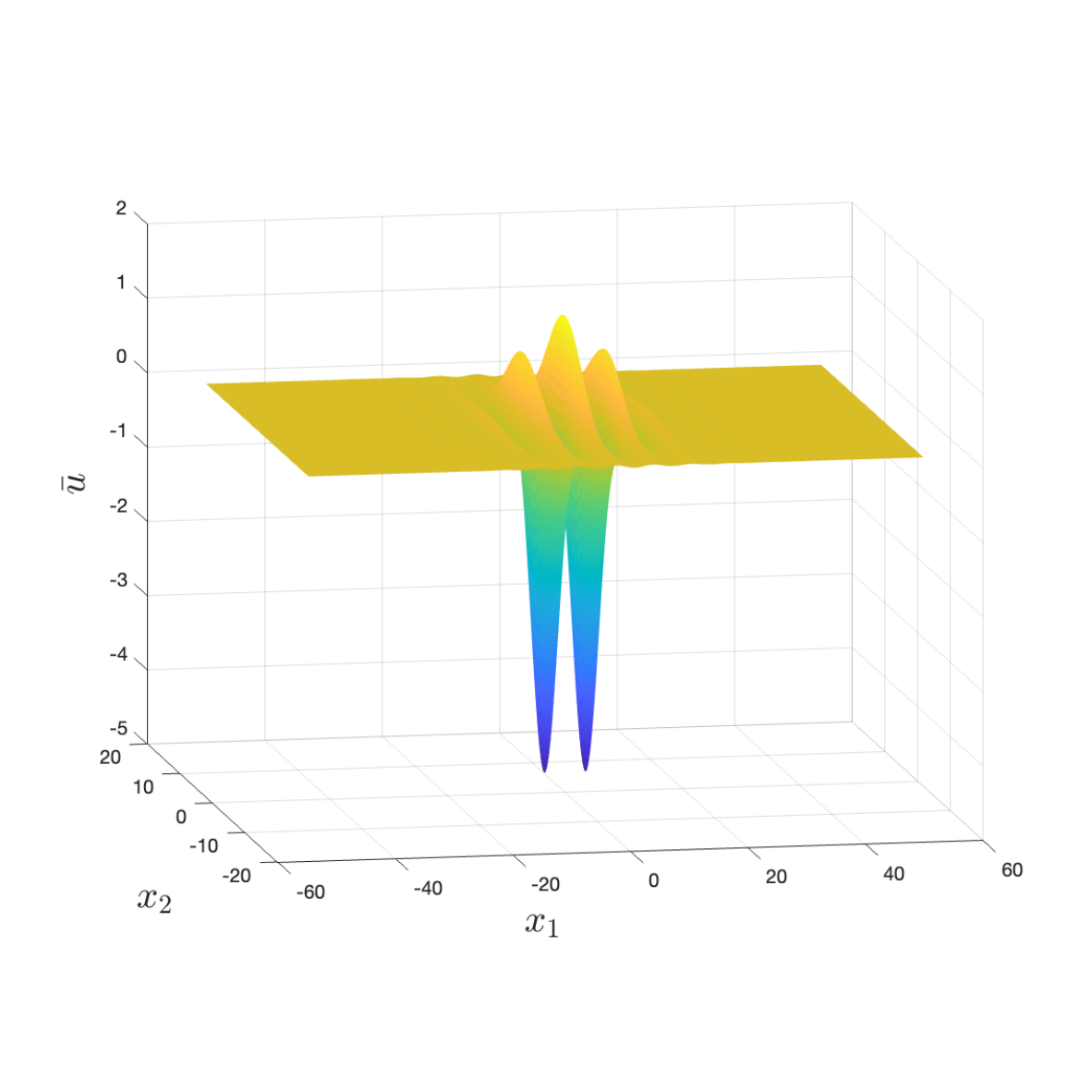}%
        \label{fig:c13}%
        }%
    \hfill%
    \subfloat[Approximate solution for $c=0.8$.]{%
        \includegraphics[width=0.5\textwidth]{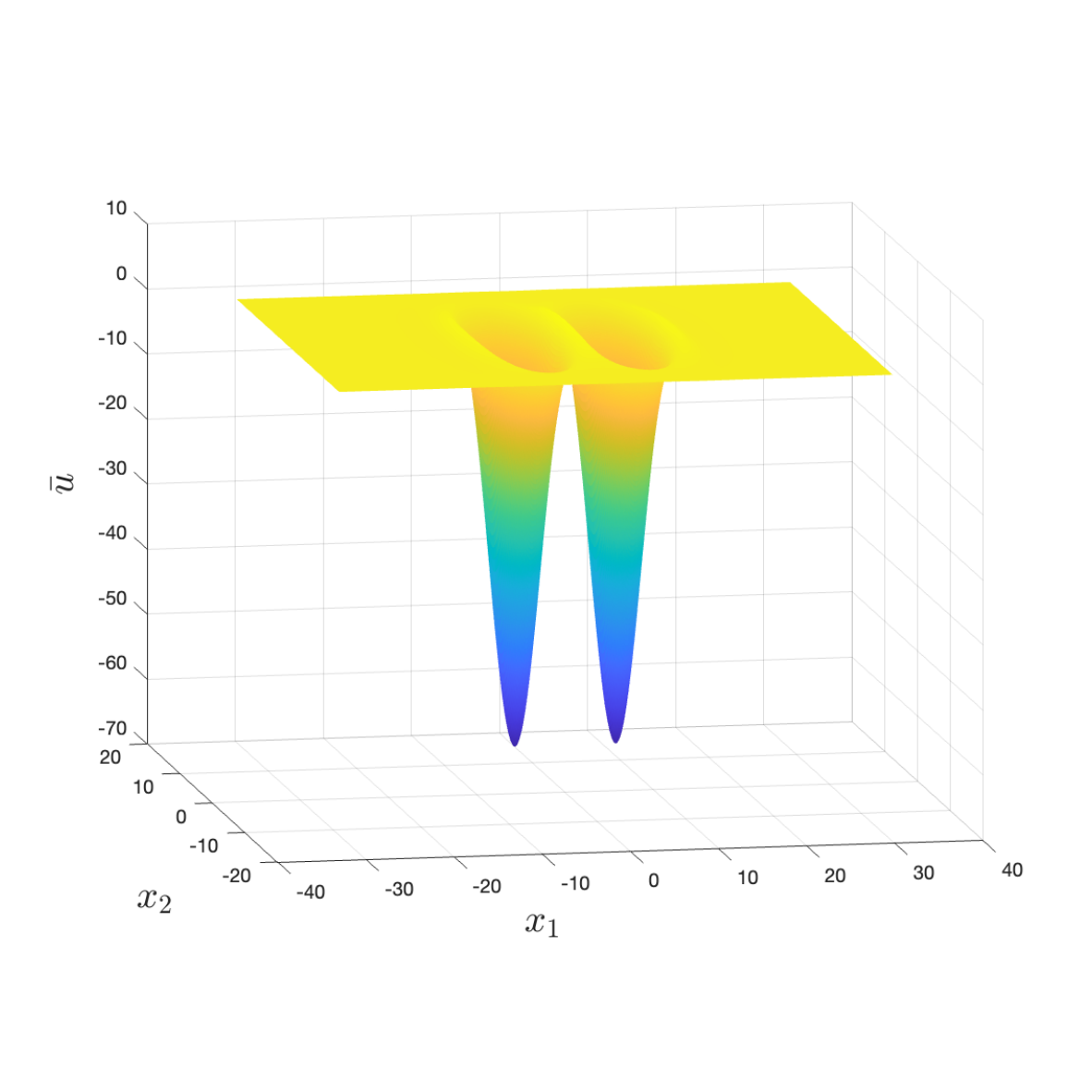}%
        \label{fig:c08}%
        }%
    \caption{Visualization of approximate two-peak solutions $\bar{u}$ to \eqref{eq : suspension bridge equation}. The approximations are truncated to a finite domain in these plots. }\label{fig:altproofs} 
\end{figure}

The existence theorems and corresponding proofs, with stability analysis, of the two alternative patterns are given below.

\begin{theorem}\label{thm:proofc13}
    For $c=1.3$, $d=(\frac{\pi}{0.06},\frac{\pi}{0.2})$, $N_0=(100,60)$, $N=(60,60)$, there exists a solution $\tilde{u} \in \mathcal{H}\cap C^\infty(\Omega)$ to \eqref{eq : suspension bridge equation} on $\Omega$ with boundary conditions \eqref{eq : neumann boundary conditions} in the $x_2$-direction such that 
    \begin{align*}
        \|\tilde{u}-\bu\|_\mathcal{H} \leq 3.1 \cdot 10^{-6},
    \end{align*}
    where $\bu$ is a numerically computed approximation visualized in Figure \ref{fig:c13}. Moreover, $D\mathbb{F}(\tilde{u}) : \mathcal{H}  \to L^2(\Omega)$ possesses exactly two negative eigenvalues and $\theta >0$, where $\theta$ is given in \eqref{def : definition of theta}. Finally, $\tilde{u}$ is orbitally unstable.
\end{theorem}
\begin{proof}
    We choose $\nu=(1.1,1.1)$ and $N^\mathrm{FFT}=(512,512)$. We have
    \begin{align*}
    \mathcal{Y}_0=2.4835 \cdot 10^{-6}, \quad \mathcal{Z}_1=1.792 \cdot 10^{-1}, \quad \mathcal{Z}_2=40.3926.
\end{align*}
Letting $\tilde{u}$ denote the zero of $\mathbb{F}$, we apply Theorem \ref{thm:existence} to obtain the error bound
\begin{align*}
    \|\tilde{u}-\bu\|_\mathcal{H} \leq 3.0256 \cdot 10^{-6}.
\end{align*}
The stability part  follows similarly as the one presented in the proof of Theorem \ref{thm:proof}. In this case, we provide the existence of exactly two negative eigenvalues by proving that two Gershgorin intervals of Lemma \ref{lem : enclosure spectrum} are in the negative part of $\R$, disjoint from one another, and disjoint from the rest of the intervals. 
\end{proof}
The next theorem and its proof are very similar to the previous ones. However, the stability analysis is more involved, so we choose to present the full proof again.
\begin{theorem}\label{thm:proofc08}
    For $c=0.8$, $d=(\frac{\pi}{0.1},\frac{\pi}{0.2})$, $N_0=(80,60)$, $N=(80,60)$, there exists a solution $\tilde{u} \in \mathcal{H}\cap C^\infty(\Omega)$ to \eqref{eq : suspension bridge equation} on $\Omega$ with boundary conditions \eqref{eq : neumann boundary conditions} in the $x_2$-direction such that 
    \begin{align*}
        \|\tilde{u}-\bu\|_\mathcal{H} \leq 4.0 \cdot 10^{-5},
    \end{align*}
    where $\bu$ is a numerically computed approximation visualized in Figure \ref{fig:c08}.   Moreover, $D\mathbb{F}(\tilde{u}) : \mathcal{H} \to L^2(\Omega)$ possesses exactly two negative eigenvalues and $\theta >0$, where $\theta$ is given in \eqref{def : definition of theta}. Finally, $\tilde{u}$ is orbitally unstable.
\end{theorem}
\begin{proof}
    We choose $\nu=(1.1,1.1)$ and $N^\mathrm{FFT}=(512,512)$. We have
    \begin{align*}
    \mathcal{Y}_0=3.3557 \cdot 10^{-5}, \quad \mathcal{Z}_1=1.4034 \cdot 10^{-1}, \quad \mathcal{Z}_2=65.7018.
\end{align*}
Letting $\tilde{u}$ denote the zero of $\mathbb{F}$, we apply Theorem \ref{thm:existence} to obtain the error bound
\begin{align*}
    \|\tilde{u}-\bu\|_\mathcal{H} \leq 3.9094 \cdot 10^{-5}.
\end{align*}
For the stability part, we obtain $\lambda_{min} \approx -15$, using the notation of  Lemma \ref{lem : lambda min}. Then, we choose $t > \lambda_{min}$ in Lemma \ref{lem : enclosure spectrum} and compute the bounds of the lemma to compute the Gershgorin intervals $\overline{B_{\epsilon_j}(\lambda_j)}$. We obtained intersecting intervals and could not conclude about the localization of negative eigenvalues. However, we obtained that the union of all the intervals was contained in $(-0.4,\infty)$. This implies that we can improve our choice for $\lambda_{min}$ and choose $\lambda_{min} = -0.45$. Choosing $t = 0.5$ in  Lemma \ref{lem : lambda min} and computing the bounds again, we get tighter radii for the Gershgorin intervals and obtain disjoint intervals on the negative part of $\R$.

The stability part then follows similarly as the one presented in the proof of Theorem \ref{thm:proof}. In this case, we provide the existence of exactly two negative eigenvalues by proving that two Gershgorin intervals of Lemma \ref{lem : enclosure spectrum} are in the negative part of $\R$, disjoint from one another, and disjoint from the rest of the intervals. 
\end{proof}
The computation time to obtain the result in Theorem \ref{thm:proof} was $892$ seconds, including the stability proof. The computation times corresponding to the results in Theorems~\ref{thm:proofc13} and~\ref{thm:proofc08} were $420$ and $586$ seconds, respectively. The increased computation time associated with our first theorem arises from selecting a larger value for $N_0$, which in turn yields a tighter error bound.

\section{Acknowledgements}
 Matthieu Cadiot was supported by the ANR project CAPPS: ANR-23-CE40-0004-01 and by the FMJH :  ANR-22-EXES-0013.
 Lindsey van der Aalst was partially supported by NWO grant 613.009.132.

\section{Appendix}
In the next lemma we provide some technical results about $l_n$, where we recall from \eqref{eq:defl} that
\begin{equation*}
l_{n_2}(\xi_1) = (2\pi)^4(  \xi_1^2 +  \tilde{n}_2^2 )^2 -  c^2 (2\pi \xi_1)^2 + 1
\end{equation*}
and that $\tilde{n}_2 = \frac{n_2}{2d_2}$.

\begin{lemma}\label{lem : results on l}
We have 
    \begin{equation}\label{eq : minimum of 1 over l}
        \min_{\xi_1 \in \R} l_{n_2}(\xi_1) = \begin{cases}
           c^2\tilde{n}_2^2+ 1-\frac{c^4}{4} &\text{ if }  0 \leq n_2 \leq \frac{d_2 c}{\pi \sqrt{2}}\\
            \tilde{n}_2^4 +1 &\text{ otherwise.}
        \end{cases}
    \end{equation}
Furthermore, given $n_2 \geq \frac{d_2c}{\pi }$, we have 
\begin{align*}
    l_{n_2}(\xi_1) \geq \frac{(2\pi)^4}{2}(\tilde{n}_2^2 + \xi_1^2)^2
\end{align*}
for all $\xi \in \R$. In particular, given $N \in \mathbb{N}$ such that $N > \frac{d_2c}{\pi }$, this implies that 
\begin{align}\label{eq:tail_lnsum}
   \sum_{n_2 = N+1}^\infty \left\|\frac{1}{l_{n_2}}\right\|_2^2 \leq \frac{5 (2d_2)^7}{48 (2\pi)^7 N^6}.
\end{align}
\end{lemma}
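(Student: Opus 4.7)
To prove \eqref{eq : minimum of 1 over l}, I would view $l_{n_2}$ as a function of $y \bydef \xi_1^2 \geq 0$, which gives the strictly convex quadratic $(2\pi)^4 (y + \tilde{n}_2^2)^2 - c^2 (2\pi)^2 y + 1$. Differentiating in $y$ produces the unique unconstrained minimizer $y_\ast = \frac{c^2}{8 \pi^2} - \tilde{n}_2^2$, whose feasibility $y_\ast \geq 0$ is equivalent to $n_2 \leq \frac{d_2 c}{\pi \sqrt{2}}$. In that regime, plugging $y_\ast$ back in yields the interior minimum after simplification; otherwise convexity forces the minimum to be attained at the boundary $y = 0$, which directly gives $l_{n_2}(0) = (2\pi)^4 \tilde{n}_2^4 + 1$.

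For the pointwise lower bound when $n_2 \geq \frac{d_2 c}{\pi}$, the plan is to split the leading quartic in half and absorb the indefinite middle term via AM-GM. Writing
\begin{equation*}
l_{n_2}(\xi_1) = \tfrac{(2\pi)^4}{2}(\xi_1^2 + \tilde{n}_2^2)^2 + \Bigl[ \tfrac{(2\pi)^4}{2}(\xi_1^2 + \tilde{n}_2^2)^2 - c^2 (2\pi)^2 \xi_1^2 + 1 \Bigr],
\end{equation*}
and using $(\xi_1^2 + \tilde{n}_2^2)^2 \geq 4 \tilde{n}_2^2 \xi_1^2$ inside the brackets, the bracketed quantity is bounded below by $(2(2\pi)^4 \tilde{n}_2^2 - c^2 (2\pi)^2) \xi_1^2 + 1$. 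The hypothesis $n_2 \geq \frac{d_2 c}{\pi}$ gives $\tilde{n}_2 \geq \frac{c}{2\pi}$, hence $\tilde{n}_2^2 \geq \frac{c^2}{4 \pi^2} \geq \frac{c^2}{8 \pi^2}$, which is exactly the threshold making the coefficient of $\xi_1^2$ nonnegative. Discarding the resulting nonnegative remainder yields the claimed inequality $l_{n_2}(\xi_1) \geq \tfrac{(2\pi)^4}{2}(\tilde{n}_2^2 + \xi_1^2)^2$.

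For the tail estimate \eqref{eq:tail_lnsum}, I would square this pointwise bound, integrate in $\xi_1$, and invoke the classical identity $\int_\R (a^2 + y^2)^{-4}\,dy = \tfrac{5\pi}{16 a^7}$ (derived via $y = a \tan \theta$ together with Wallis's formula) with $a = \tilde{n}_2 = \tfrac{n_2}{2 d_2}$. Bookkeeping the powers of $2\pi$ and $2 d_2$ yields $\|1/l_{n_2}\|_{L^2(\R)}^2 \leq \tfrac{5 (2 d_2)^7}{8 (2\pi)^7 n_2^7}$. The outer summation is then handled by the integral comparison $\sum_{n_2 = N+1}^\infty n_2^{-7} \leq \int_N^\infty y^{-7}\,dy = \tfrac{1}{6 N^6}$, which reproduces the stated constant $\tfrac{5 (2 d_2)^7}{48 (2\pi)^7 N^6}$. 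None of the steps is genuinely delicate; the only point worth extra care is keeping the multiple scaling factors $(2\pi)$ and $(2 d_2)$ aligned so that the final constants come out exactly as stated in the lemma.
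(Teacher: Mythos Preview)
Your proposal is correct and follows essentially the same route as the paper: reduce $l_{n_2}$ to a convex quadratic in $\xi_1^2$ (the paper uses the rescaled variable $x=(2\pi\xi_1)^2$ instead of your $y=\xi_1^2$), split off half of the quartic and check that the remainder is nonnegative under the hypothesis $2\pi\tilde{n}_2\ge c$, and then combine the integral $\int_\R (a^2+\xi_1^2)^{-4}\,d\xi_1=\tfrac{5\pi}{16a^7}$ with the integral comparison for $\sum n_2^{-7}$. The only cosmetic difference is that the paper verifies nonnegativity of the bracket by expanding it as $\tfrac12 x^2+\bigl((2\pi\tilde n_2)^2-c^2\bigr)x+\tfrac12(2\pi\tilde n_2)^4+1$ and observing the linear coefficient is nonnegative, whereas you use AM--GM on $(\xi_1^2+\tilde n_2^2)^2\ge 4\tilde n_2^2\xi_1^2$; both yield the same threshold. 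Your boundary value $l_{n_2}(0)=(2\pi)^4\tilde n_2^4+1$ (and the analogous interior minimum) is what the paper's proof also obtains; the missing factors of $2\pi$ in the displayed cases of \eqref{eq : minimum of 1 over l} are a typo in the statement, not a flaw in your argument.
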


\begin{proof}
Denoting $x = (2\pi \xi_1)^2$ and $\alpha = (2\pi\xi_2)^2$, we have
\begin{align*}
    l(\xi_1,\xi_2) = (x+\alpha)^2 - c^2 x + 1 \bydef p_\alpha(x).
\end{align*}
Studying the variations of $p_\alpha$, we have that $p_\alpha$ has a global minimum at $x = \frac{c^2}{2}-\alpha$. In particular, this implies that
\begin{align*}
    l(\xi_1,\xi_2) \geq \begin{cases}
        (2\pi\xi_2)^2c^2 +1 - \frac{c^4}{4} &\text{ if } \frac{c^2}{2} \geq (2\pi\xi_2)^2\\
        (2\pi\xi_2)^4 +1 &\text{ otherwise}.
    \end{cases}
\end{align*}
Choosing $\xi_2 = \frac{n_2}{2d_2}$, we obtain the desired result. %For the computation of $\|\frac{1}{l}\|_2$, we have
  %  \begin{align*}
   %     \left\Vert\frac{1}{l}\right\Vert_2^2=\int_{\mathbb{R}^2}\frac{1}{\left(|2\pi\xi|^4-c^2(2\pi\xi_1)^2+1\right)^2}d\xi.
  %  \end{align*}
  %  Using polar coordinates, we write 
   %     \begin{align*}
   %     \int_{\mathbb{R}^2}\frac{1}{\left(|2\pi\xi|^4-c^2(2\pi\xi_1)^2+1\right)^2}d\xi&=\int_0^{2\pi}\int_0^\infty \frac{r}{((2\pi)^4r^4-c^2(2\pi)^2r^2(\cos(\varphi))^2+1)^2}dr d\varphi\\
   %     &\leq \int_0^\infty \frac{2\pi r}{((2\pi)^4r^4-c^2(2\pi)^2r^2+1)^2}dr \\
    %    & =\frac{1}{2\pi} \int_0^\infty \frac{ r}{(r^4-c^2r^2+1)^2}dr
   % \end{align*}
%We conclude the proof using that
%\[
%\int_0^\infty \frac{ r}{(r^4-c^2r^2+1)^2}dr = \frac{c^2}{8-2c^4}  + \frac{\pi+ 2\arctan(\frac{c^2}{\sqrt{4-c^4}})}{(4-c^4)^{\frac{3}{2}}}.
%\]
To prove the remaining statement, notice that 
\begin{align*}%\label{eq : proof appendix step 0}
    l_{n_2}(\xi_1)- \frac{1}{2}((2\pi\tilde{n}_2)^2 + (2\pi\xi_1)^2)^2 &= \frac{1}{2}((2\pi\tilde{n}_2)^2+ x)^2 - c^2x +1\\
    &= \frac{1}{2}((2\pi\tilde{n}_2)^4+ 2(2\pi\tilde{n}_2)^2x +x^2) - c^2x +1
\end{align*}
where $x = (2\pi\xi_1)^2$. Consequently, we have that $ l_{n_2}(\xi_1)- \frac{1}{2}((2\pi\tilde{n}_2)^2 + (2\pi\xi_1)^2)^2 \geq 0$ if $2\pi\tilde{n}_2 \geq c$.
Now, we have
\begin{align*}%\label{eq:linvnorm}
    \left\|\frac{1}{l_{n_2}}\right\|_2^2 \leq  \frac{4}{(2\pi)^8}\int_{\R} \frac{1}{(\tilde{n}_2^2 + \xi_1^2)^4}d\xi_1 = \frac{20\pi}{16(2\pi)^8 \tilde{n}_2^7} = \frac{5}{8 (2\pi\tilde{n}_2)^7}.
\end{align*}
Moreover, we get
\begin{align*}%\label{eq:ntildesum}
    \sum_{n_2 = N+1}^\infty \frac{1}{\tilde{n}_2^7} = (2d_2)^7 \sum_{n_2 = N+1}^\infty \frac{1}{n_2^7} \leq (2d_2)^7 \int_{N}^\infty \frac{1}{x^7}dx =   \frac{(2d_2)^7}{6N^6}.
\end{align*}
Noticing that $2\pi \tilde{n}_2 \geq c$ is equivalent to $n_2 \geq \frac{d_2c}{\pi}$, we have 
\begin{align*}
    \sum_{n_2 = N+1}^\infty \left\|\frac{1}{l_{n_2}}\right\|_2^2 \leq  \frac{(2d_2)^7}{6N^6}\frac{5}{8(2\pi)^7}
\end{align*}
 where $N\in \mathbb{N}$ such that $N >\frac{d_2c}{\pi}.$
\end{proof}

\bibliographystyle{abbrv}
\bibliography{bibl}
    
\end{document}